\newtheorem{thm}{Theorem}[section] 
\newtheorem{prop}[thm]{Proposition}
\newtheorem{lemma}[thm]{Lemma}
\newtheorem{cor}[thm]{Corollary}
\theoremstyle{definition}
\newtheorem{defin}[thm]{Definition}
\newtheorem{example}[thm]{Example}
\theoremstyle{remark}
\newtheorem{rmk}[thm]{Remark}
\numberwithin{equation}{section} 
\newcommand{\R}{\mathbb R}
\newcommand{\Q}{\mathbb Q}
\newcommand{\Z}{\mathbb Z}
\renewcommand{\c}{\subseteq}
\renewcommand{\O}{\mathcal O}
\newcommand{\N}{\mathbb N}
\newcommand{\A}{\mathbb A}
\newcommand{\mc}[1]{\mathcal{#1}}
\newcommand{\cl}{\overline}
\newcommand{\set}[1]{\{#1\}}
\renewcommand{\phi}{\varphi}
\newcommand{\on}[1]{\operatorname{#1}}
\DeclareMathOperator{\Spec}{Spec}
\DeclareMathOperator{\coker}{coker}
\DeclareMathOperator{\ed}{ed}
\DeclareMathOperator{\trdeg}{trdeg}
\newcommand{\ang}[1]{\left \langle{#1}\right \rangle}
\newcommand{\floor}[1]{\left \lfloor{#1}\right \rfloor}
\newcommand{\doublewidetilde}[1]{{%
		\mathpalette\double@widetilde{#1}%
}}
\newcommand{\double@widetilde}[2]{%
	\sbox\z@{$\m@th#1\widetilde{#2}$}%
	\ht\z@=.9\ht\z@
	\widetilde{\box\z@}%
}
\title{Essential dimension of representations of algebras}
\author{Federico Scavia}
\begin{document}

	\begin{abstract}
Let $k$ be a field, $A$ be a finitely generated associative $k$-algebra and
$\on{Rep}_A[n]$ be the functor $\on{Fields}_k\to \on{Sets}$, which sends a field $K$ containing $k$ to the set of isomorphism classes of representations of $A_K$ of dimension at most $n$.
We study the asymptotic behavior of the essential dimension of this
functor, i.e., the function
$r_A(n) := \ed_k(\on{Rep}_A[n])$, as $n\to\infty$. In particular, we show that the
rate of growth of $r_A(n)$
determines the representation type of $A$. That is, $r_A(n)$ is bounded from
above if $A$ is of finite
representation type, grows linearly if $A$ is of tame representation type,
and grows quadratically if $A$ is
of wild representation type. Moreover, $r_A(n)$ allows us to construct invariants of algebras 
which are finer than the representation type.
	\end{abstract}
	\maketitle
	\section{Introduction}
	Let $k$ be an algebraically closed field, and let $A$ be a finitely generated $k$-algebra (associative, unital, but not necessarily commutative). We begin by recalling the notion of
representation type of $A$, due to Yu. Drozd. We will use the terms ``module" and ``representation" interchangeably.
	
	The algebra $A$ is of \emph{finite representation type} if there are only finitely many indecomposable finite-dimensional $A$-modules, up to isomorphism. For example, if $A=kG$ is a group algebra for a finite group $G$ and $\on{char}k=0$, then $A$ is of finite representation type. 
	
	Loosely speaking, $A$ is \emph{tame} if it admits infinitely many indecomposable representations and if for each $n\geq 0$ the indecomposable $A$-modules occur in a finite number of one-parameter families. The main example is the polynomial algebra $A=k[t]$: the indecomposable $n$-dimensional representations of $A$ correspond to Jordan blocks of size $n$, and the parameter is the eigenvalue.
	
	Finally, $A$ is \emph{wild} if a subset of the isomorphism classes of indecomposable $A$-modules can be parametrized in a one-to-one manner using the indecomposable representations of the free algebra $k\set{x,y}$ on $2$ generators. We refer the reader to \Cref{prelimalg} for the precise definitions.
	
	A classification of the representations of $k\set{x,y}$, in the spirit of those for group algebras in characteristic zero or $k[t]$, is considered to be hopeless; see \cite{ringel1974representation}. Roughly speaking, when $A$ is of finite representation type or tame one can explicitly classify its representations, and when $A$ is wild such a classification is impossible.
	
	When first confronted with these definitions, one may be surprised by the big gap between the notions of tame and wild. However, when $A$ is finite-dimensional, there are no intermediate possibilities. According to a celebrated theorem of Drozd \cite{drozd}, $A$ is of exactly one of the three
    representation types we described: finite, tame or wild; see \cite[Theorem 1, Proposition 2, Corollary 1]{drozd} or \cite[Theorem B]{crawley1988tame}.
	
	The purpose of the present work is to reinterpret and refine Drozd's Theorem via \emph{essential dimension}. We denote by $r_A(n)$ the essential dimension of the functor of representations of $A$ of dimension at most $n$. By definition, $r_A(n)$ is the smallest integer $m\geq 0$ such that for every field extension $K/k$ and every representation $M$ of $A_K=A\otimes_kK$ such that $\dim_KM\leq n$, there exist a subfield $k\c K_0\c K$ such that $\trdeg_kK_0\leq m$ and a representation $N$ of $A_{K_0}$ such that $N\otimes_{K_0}K\cong M$; see \Cref{essdim} for further details.
	
	The definition of $r_A(n)$ takes as input an enormous amount of information: we are considering all $A_K$-representations for every field extension $K/k$. In particular, even in the case where $k$ is algebraically closed, we are forced to consider representations over fields that are not necessarily algebraically closed. 
		
	The main result of this paper is the following refinement of Drozd's Theorem. It follows from the combination of \Cref{trichotomy1}, \Cref{trichotomy2} and \Cref{trichotomy3}. 
	
	\begin{thm}\label{trichotomy-finite}
	Let $A$ be a finite-dimensional algebra over an algebraically closed field $k$.
		\begin{enumerate}[label=(\alph*)]
			\item If $A$ is of finite representation type, then \[r_A(n)=0\] for every $n\geq 1$.
			\item If $A$ is tame, then there exists $c>0$ such that \[cn-1\leq r_A(n)\leq 2n-1\] for every $n\geq 1$. 
			\item If $A$ is wild, then there exists $c>0$ such that \[r_A(n)\geq cn^2-1\] for every $n\geq 1$.
		\end{enumerate}	
	\end{thm}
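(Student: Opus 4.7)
The plan is to prove the three parts separately, exploiting the structural characterization of each representation type.

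For part (a), since $k$ is algebraically closed and $A$ has finite representation type, let $M_1,\dots,M_s$ be a complete list of finite-dimensional indecomposable $A$-modules. The endomorphism ring of each $M_i$ is local with residue field $k$; together with the Krull--Schmidt theorem, a standard descent argument shows that the indecomposable $A_K$-modules for any extension $K/k$ are exactly the base changes $M_i\otimes_k K$. Hence every $A_K$-representation of dimension at most $n$ is of the form $\bigoplus_i(M_i\otimes_k K)^{e_i}$, which manifestly descends to $k$, so $r_A(n)=0$.

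For part (b), the upper bound uses the defining property of tame algebras: for each dimension $d$ there are finitely many $A$-$k[t]$-bimodules, free of rank $d$ over $k[t]$, whose specializations $L_j\otimes_{k[t]}k[t]/(t-a)$ realize all but finitely many indecomposable $d$-dimensional $A$-modules. Consequently each indecomposable summand of a representation $M$ descends to a subfield of transcendence degree at most one; since a Krull--Schmidt decomposition of $M$ has at most $n$ summands, this yields an upper bound linear in $n$, with the extra factor of $2$ coming from parametrizing the finitely many exceptional indecomposables together with the generic ones. For the lower bound, pick a dimension $d_0$ admitting a one-parameter family of pairwise non-isomorphic indecomposable $d_0$-dimensional $A$-modules (such $d_0$ exists for tame $A$ by Crawley--Boevey); taking direct sums of $\lfloor n/d_0\rfloor$ members of this family at algebraically independent parameters yields a family whose essential dimension grows at least linearly in $n$.

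Part (c) is the most involved. By wildness there is an $A$-$k\set{x,y}$-bimodule $B$, free of some finite rank $r$ as a right $k\set{x,y}$-module, such that the functor $F=B\otimes_{k\set{x,y}}(-)$ sends non-isomorphic indecomposables to non-isomorphic indecomposables. One first verifies that $F$ commutes with arbitrary base change and reflects descent along field extensions, which gives $r_A(rn)\ge r_{k\set{x,y}}(n)$ up to a bounded correction, so it suffices to prove $r_{k\set{x,y}}(n)\ge cn^2-1$. An $n$-dimensional representation of $k\set{x,y}_K$ is a pair $(X,Y)\in M_n(K)^2$ modulo simultaneous $\on{GL}_n(K)$-conjugation, and a generic pair has stabilizer equal to the scalars, so the generic orbit has codimension $n^2-1$ in $M_n^2$. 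An incompressibility argument applied to the generic pair of matrices then yields the quadratic lower bound.

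The main obstacle I expect is the lower bound in (c): bounding the essential dimension of simultaneous conjugacy classes of pairs of matrices from below by $cn^2-1$ is not a mere dimension count, but requires a genuine incompressibility statement for the generic pair $(X,Y)$ whose entries are $2n^2$ independent transcendentals over $k$. A related subtlety is checking that the wildness embedding really transfers essential dimension inequalities; this requires that the bimodule functor commutes with base change and reflects isomorphism over arbitrary field extensions, neither of which is automatic from the bare definition of wildness.
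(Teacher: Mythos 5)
Your overall strategy matches the paper's, but two steps conceal genuine gaps. In part (b), the claim that ``each indecomposable summand of a representation $M$ descends to a subfield of transcendence degree at most one'' is false as stated, and the factor of $2$ does not come from separating exceptional and generic indecomposables. First, the definition of tameness only parametrizes indecomposable $A$-modules over the algebraically closed base field; to control indecomposable $A_K$-modules for an arbitrary extension $K/k$ one needs a nontrivial base-change statement (the paper invokes a model-theoretic lemma of Kasjan to show that over algebraically closed $K$ every indecomposable is defined over $\overline{k(\lambda)}$). Second, even granting that, what one obtains is only that the \emph{residue field} $k(M)$ of an indecomposable $M$ satisfies $\trdeg_k k(M)\leq 1$; the module need not be defined over $k(M)$ (compare the quaternion example, where the minimal field of definition fails to exist). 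If your claim were literally true, taking the compositum of at most $n$ fields of transcendence degree $\leq 1$ would give $r_A(n)\leq n$, not $2n-1$. The actual source of the extra $n$ is the separate bound $\ed_{k(M)}M\leq \dim_K M-1$, proved by gerbe-theoretic/Brauer-type arguments; without some substitute for this your upper bound is unproved. (Note also that an indecomposable module of dimension $d$ can genuinely require transcendence degree $d$ to define --- e.g.\ a generic companion matrix for $A=k[t]$ --- so the bound must go through the residue field.)

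In part (c), your reduction $r_A(rn)\geq r_{k\{x,y\}}(n)$ requires the bimodule functor to \emph{reflect fields of definition}, i.e.\ that $B\otimes M$ defined over $K_0$ forces $M$ to be defined over (essentially) $K_0$; strictness only gives injectivity on isomorphism classes over a fixed field, and you correctly flag but do not close this gap. The paper avoids it entirely: strictness yields a $\on{GL}_m$-equivariant rational map $\on{M}_{m\times m}^{\oplus 2}\dashrightarrow Y_{md}$ that is generically finite-to-one on orbits, and a single lemma (Rosenlicht quotients plus the fiber dimension theorem, plus the fact that any field of definition of a point of $[Y_{md}/\on{GL}_{md}]$ contains the residue field of its image in the coarse quotient) gives $\ed_k[Y_{md}/\on{GL}_{md}]\geq 2m^2-(m^2-1)=m^2+1$ directly. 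This same lemma is what makes the ``incompressibility'' you worry about into an honest dimension count, and it is also the mechanism for the lower bound in (b), where your proposal likewise asserts the conclusion without an argument. Part (a) is essentially the paper's route (via the Benson--Reichstein/Jensen--Lenzing statement that every indecomposable $A_K$-module is a summand of a base-changed indecomposable), though that input is itself not a routine descent argument.
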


Some remarks are in order. 
\begin{enumerate}[label=(\roman*)]
	\item \Cref{trichotomy-finite} gives a common framework for several seemingly unrelated results of \cite{karpenko2014numerical} and \cite{bensonreichstein}; see \Cref{groupalgebras} for further details. 
    \item Part (a) of \Cref{trichotomy-finite} is \cite[Theorem 1.3]{bensonreichstein}. However, if $k$ is only assumed to be perfect, we will show that $r_A(n)$ is bounded from above; see \Cref{trichotomy1}. Parts (b) and (c) of \Cref{trichotomy-finite} hold when $k$ is arbitrary, $A$ is finitely generated (not necessarily finite-dimensional) and $A_{\cl{k}}$ is tame or wild, respectively; see \Cref{trichotomy2} and \Cref{trichotomy3}. 
    \item If $k$ is not algebraically closed, the representation type of $k$-algebras becomes more subtle to define; see \Cref{variants}. 
	\item If $A$ is of tame or wild type, it is still possible that $r_A(n)=0$ for small values of $n$. This explains the presence of $-1$ in the lower bounds.
	\item If $A$ is generated by $r$ elements over
	$k$, then every $A_K$-module $M$ is defined over the subfield $K_0$ of $K$ generated over $k$ by the $rn^2$ matrix entries of left multiplication by the generators. Thus we have the following naive upper bound
	\[\ed_kM\leq\on{trdeg}_k(K_0)\leq rn^2\] which shows that quadratic growth is the fastest possible.
	\item 	Our proof of \Cref{trichotomy-finite} is based on combining
	stack-theoretic techniques with representation-theoretic arguments. The
	stack-theoretic techniques we use were initially developed in \cite{biswasdhillonhoffmann}, for
	the purpose of computing the essential dimension of the stack of vector bundles on
	a given curve. In this paper we modify these techniques and adapt them to study the
	essential dimension of representations of algebras.  Some of our
	representation-theoretic arguments make use of results from logic and
	model theory \cite{jensen1982homological,kasjanperiodicity}.
\end{enumerate}

	When $k$ is algebraically closed and $A$ is finite-dimensional, \Cref{trichotomy-finite} tells us that the asymptotic behavior of $r_A(n)$ determines the representation type of $A$. We may then regard this function as a finer invariant of $A$, and use it to extract numerical invariants. For every field $k$ and every finitely generated $k$-algebra $A$, set:
	\begin{align*}
	a_0(A):=\lim_{n\to\infty}r_A(n),\qquad &\text{if $A_{\cl{k}}$ is of finite representation type,}\\
	a_1^+(A):=\limsup_{n\to\infty}\frac{r_A(n)}{n},\qquad &\text{if $A_{\cl{k}}$ is tame,}\\
	a_2^+(A):=\limsup_{n\to\infty}\frac{r_A(n)}{n^2},\qquad &\text{if $A_{\cl{k}}$ is wild.}
	\end{align*}
Using $\liminf_{n\to\infty}$ instead, one may also define $a_1^-(A),a_2^-(A)$. We also write
	\begin{align*}
a_1(A):=\lim_{n\to\infty}\frac{r_A(n)}{n},\qquad &\text{if $A_{\cl{k}}$ is tame,}\\
a_2(A):=\lim_{n\to\infty}\frac{r_A(n)}{n^2},\qquad &\text{if $A_{\cl{k}}$ is wild,}
\end{align*}
when such limits exist. 	
When $k$ is algebraically closed and $A$ is finite-dimensional, \Cref{trichotomy-finite} shows that if $A$ is tame then $0<a_1^{-}(A)\leq a_1^+(A)\leq 2$, and that if $A$ is wild then $a_2^{-}(A)>0$.
	
The number $a_0(A)$ has been studied in \cite{karpenko2014numerical} and \cite{bensonreichstein}. When they exist, the numbers $a_1(A)$ and $a_2(A)$ represent the coefficients of the ``leading term" of $r_A(n)$, as $n\to\infty$. It may also be of interest, even though beyond the scope of this paper, to investigate the ``next term", i.e., the rate of growth of $r_A(n) - a_1(A) n$ for tame algebras, or $r_A(n) - a_2(A) n^2$ for wild algebras. 

To demonstrate that the invariants $a_i^{\pm} (A)$ ($i = 0, 1,
2$) are accessible, at least in some cases, we will compute them
explicitly in the case, where $A$ is a quiver algebra. Let $Q$ be a quiver, and let $A=kQ$ be its path algebra; see \Cref{prelimquiver} for definitions and references. The algebra $kQ$ is finitely generated, and is finitely dimensional if and only if $Q$ has no oriented cycles. The representation type of $Q$ is, by definition, the representation type of $\cl{k}Q$. Gabriel's Theorem \cite{gabriel} states that $Q$ is of finite representation type if and only if its underlying graph is a Dynkin diagram of type $A,D,E$; see also \cite[Theorem 3.3]{kirillov} or \cite[Theorem 8.12]{schiffler}. The quiver $Q$ is tame if its underlying graph is an extended Dynkin diagram of type $\widetilde{A},\widetilde{D},\widetilde{E}$, and it is wild in the remaining cases. In particular, every path algebra $\cl{k}Q$ is of finite, tame or wild representation type, as in
Drozd's Theorem, even though such algebras are allowed to be infinite-dimensional. We will sometimes collectively refer to quivers of finite or tame representation type as non-wild quivers.

If $Q$ is wild, let $\Lambda_Q$ be the maximum of the opposite of the Tits form of $Q$ on \[\set{\alpha\in\R^{Q_0}_{\geq 0}: \sum_{i\in Q_0}\alpha_i=1}.\] Here $Q_0$ is the set of vertices of $Q$. As we explain in \Cref{maximum}, $\Lambda_Q$ can be easily computed from the underlying graph of $Q$; we give several examples in \Cref{threefamilies}.

The following theorem follows from the combination of \Cref{finrep}(b), \Cref{tameed} and \Cref{wild}.
\begin{thm}\label{wild-intro}
	Let $k$ be an arbitrary field, and let $Q$ be a connected quiver (possibly with loops and oriented cycles).
		\begin{enumerate}[label=(\alph*)]
	    \item If $Q$ is of finite representation type, then \[r_{kQ}(n)=0\] for every $n\geq 1$.
		\item If $Q$ is tame and $\delta=(\delta_i)_{i\in Q_0}$ is the null root of $Q$, then \[r_{kQ}(n)=\floor{\frac{n}{\sum\delta_i}}\] for every $n\geq 1$, where the sum is over the set of vertices of $Q$.
		\item If $Q$ is wild, then \[a_2(kQ)=\Lambda_Q;\] in particular $a_2(kQ)\in\Q_{>0}$. Moreover, $a_2(kQ)\geq \frac{1}{2480}$, with equality if and only if $Q$ is the disjoint union of a (possibly empty) non-wild quiver and of quivers of type $\widetilde{\widetilde{E_8}}$
		\[
		\begin{tikzcd}
		&& 4 \\
		1 \arrow[r,-]
		& 2 \arrow[r,-]
		& 3 \arrow[r,-] \arrow[u,-]
		& 5 \arrow[r,-]
		& 6 \arrow[r,-]
		& 7 \arrow[r,-]
		& 8 \arrow[r,-]
		& 9 \arrow[r,-]
		& 10.
		\end{tikzcd}
		\]\noindent
	\end{enumerate}
\end{thm}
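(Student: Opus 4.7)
All three parts reduce to the general trichotomy proven earlier in the paper (\Cref{trichotomy1}--\Cref{trichotomy3}), specialized to path algebras via classical results about quiver representations.

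For part (a), I would invoke Gabriel's theorem, which identifies indecomposable $\cl{k}Q$-modules with positive roots of $Q$; each such indecomposable is rigid, has endomorphism ring equal to the base field, and is in fact defined over the prime field of $k$. Given a representation $M$ of $(kQ)\otimes_k K$ of dimension at most $n$ for some $K\supseteq k$, Krull--Schmidt produces an essentially unique decomposition of $M$ into indecomposable $KQ$-modules; each summand is the base change of a unique indecomposable $kQ$-module by Hilbert~90 (since its endomorphism algebra is simply $K$). Hence $M$ descends to $k$, giving $r_{kQ}(n)=0$.

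For part (b), the indecomposable $\cl{k}Q$-modules split into preprojective, preinjective, and regular components. The preprojectives and preinjectives are rigid and descend to $k$ exactly as in (a), so they contribute nothing to essential dimension. Regular indecomposables live in tubes: homogeneous tubes yield, for each $m\geq 1$, a $\P^1$-family of indecomposables of dimension vector $m\delta$, while non-homogeneous tubes contain finitely many rigid indecomposables plus further $\P^1$-families of dimension vector $m\delta$. The upper bound follows from Krull--Schmidt: each non-rigid summand has dimension vector in $\Z_{>0}\delta$ and contributes essential dimension at most $1$, so the total is bounded by the number of such summands, which is at most $\floor{n/\sum_i\delta_i}$. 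For the matching lower bound, I would exhibit a direct sum of $\floor{n/\sum_i\delta_i}$ pairwise non-isomorphic indecomposables of dimension vector $\delta$ chosen from a single homogeneous tube, and compute its essential dimension using the stack-theoretic machinery of \Cref{tameed}.

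For part (c), Kac's theorem identifies the moduli stack $[R(\alpha)/GL(\alpha)]$ of $\alpha$-dimensional representations as a smooth Artin stack of dimension $-q_Q(\alpha)$; combining this with the essential-dimension methods developed earlier in the paper yields
\[
r_{kQ}(n)=\max_{\alpha\in\N^{Q_0},\ |\alpha|\leq n}(-q_Q(\alpha))+O(n).
\]
Since $q_Q$ is homogeneous of degree $2$, the right-hand side equals $n^2\Lambda_Q+O(n)$, giving $a_2(kQ)=\Lambda_Q$; rationality follows from optimizing a rational quadratic form over a rational polytope. The sharp bound $\Lambda_Q\geq 1/2480$ with the stated equality cases is the combinatorial heart of the theorem: after reducing to connected wild $Q$ (disjoint unions separate) and noting that $\Lambda_Q$ is weakly monotone under subgraph inclusion (adjoining a vertex lets the maximizer place weight zero there), the minimum is attained at the minimal wild quivers, a finite classical list of ``doubly extended'' Dynkin-type graphs. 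I would compute $\Lambda_{\widetilde{\widetilde{E_8}}}=1/2480$ directly by Lagrange multipliers on the interior of the $10$-simplex and verify case by case that every other minimal wild underlying graph yields strictly larger $\Lambda$. This last case analysis---pinning down $\widetilde{\widetilde{E_8}}$ as the unique minimizer---is where the bulk of the technical work lies and is the main obstacle.
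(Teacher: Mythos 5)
The main issue is in your part (b). You claim that each non-rigid indecomposable summand, which has dimension vector $m_h\delta$ for some $m_h\geq 1$, ``contributes essential dimension at most $1$,'' and you then bound $\ed_kM$ by the number of such summands. The correct per-summand bound is $m_h$, not $1$: already for the one-loop quiver ($kQ=k[t]$, $\delta=(1)$) the indecomposable $K$-module $K[t]/(p)$, with $p$ irreducible of degree $m$ and generic coefficients, has essential dimension $m$, whereas your accounting would give $1$. The slip is the conflation of two different statements: over $\cl{k}$ the indecomposables of dimension vector $m\delta$ do form one-parameter families, which gives $\trdeg_k k(N)\leq 1$ for each such indecomposable over an \emph{algebraically closed} field; but over a general field $K$ an indecomposable of dimension vector $m\delta$ may split over $\cl{K}$ into $m$ Galois-conjugate indecomposables of dimension vector $\delta$ and genuinely require $m$ parameters. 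The ``one parameter over $\cl k$'' input alone only yields $r_Q(n)\leq 2n-1$, via \Cref{algclosedreduction} as in \Cref{trichotomy2}. To get the sharp bound $\floor{n/\sum\delta_i}$ the paper proves \Cref{uppertame} (the Appendix): an indecomposable of dimension vector $m\delta$ over an arbitrary $K$ can be presented by matrices with entries in $\set{0,1,a_1,\dots,a_m}$, a rational-canonical-form analogue obtained by extending the Donovan--Freislich classification to arbitrary base fields. That is the substantial missing piece; with it, $\ed_kM_h\leq m_h$ and $\sum_h m_h\leq\floor{n/\sum\delta_i}$ give the upper bound. Your lower bound for (b), and your handling of the preprojective, preinjective and rigid regular summands, are fine.

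Parts (a) and (c) follow the paper's strategy. For (a) the paper descends via the Kac--Schofield theorem together with Noether--Deuring rather than Hilbert 90; this is a cosmetic difference. For (c), note that the formula $r_{kQ}(n)=\max_{|\alpha|\leq n}(-q_Q(\alpha))+O(n)$ conceals two nontrivial steps: the lower bound needs the maximizing direction to be an integer multiple of a Schur root so that King's moduli space of stable representations is nonempty of dimension $1-\ang{\alpha,\alpha}$ (\Cref{itsschur}, via the fundamental region), and the upper bound on $\trdeg_kk(M)$ for indecomposables cannot be read off from the dimension of $[X_{Q,\alpha}/G_{Q,\alpha}]$ alone, since stabilizers may be positive-dimensional; the paper uses the auxiliary smooth stack $\mc{N}il^n_Q$ (\Cref{nilpdim}, \Cref{trdegindec}). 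The combinatorial part of (c) --- monotonicity under passing to subquivers and removing arrows, reduction to the finite list of minimal wild graphs, Lagrange multipliers, and the case analysis singling out $\widetilde{\widetilde{E_8}}$ --- matches the paper's proof of \Cref{wild}.
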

Part (a) of \Cref{wild-intro} follows from results of Kac and Schofield that predate essential dimension; see \Cref{finrep}. The proof of part (b) rests on the classification of representations of tame quivers over an arbitrary field. We refer the reader to
\Cref{prelimquiver} for the definition of the null root. The proof of part (c) relies on
stack-theoretic techniques. In particular, it is crucial that the algebraic stack parametrizing representations of $Q$ is smooth over $k$.

It follows from \Cref{wild-intro} that the limits $a_1(kQ)$ and $a_2(kQ)$ exist when $Q$ is tame or wild, respectively. The existence of the limits $a_1(A)$ and $a_2(A)$ for an arbitrary $k$-algebra $A$ is an open problem.

It has been brought to our attention by Richard Lyons that $2480$ is the dimension of the minimal faithful complex representation of the Lyons group. The Lyons group is one of the $26$ sporadic finite simple groups; see \cite{lyons1972evidence} and \cite{sims1973existence}.  Why this particular number appears in Theorem 1.2(c) is a bit of a mystery.
	
	\subsection*{Notational conventions}
	Throughout this paper $k$ will denote a fixed base field, and $A$ a finitely generated associative unital $k$-algebra. We will denote by $\cl{k}$ an algebraic closure of $k$. For a field extension $K/k$, we will denote by $A_K$ the tensor product $A\otimes_kK$. When we consider an $A_K$-module $M$, unless otherwise specified we will assume that $M$ is a finite-dimensional $K$-vector space. For a field extension $L/K$, we will denote $M\otimes_KL$ by $M_L$.
	
	If $R$ is a $k$-algebra, we denote by $j(R)$ its Jacobson radical.
	
	
	\section{Preliminaries on essential dimension}\label{essdim}
	The definition of $r_A(n)$ is a special case of essential dimension of functors and stacks. We start by giving the definition of the essential dimension, due to Merkurjev \cite{berhuy2005notion} in the context of functors, and to Brosnan, Reichstein and Vistoli \cite{genericity0} for algebraic stacks. 
	
	\begin{defin}
	Let $\on{Fields}_k$ denote the category of field extensions of $k$. Let $F:\on{Fields}_k\to\on{Sets}$ be a functor.
	\begin{enumerate}[label=(\roman*)]
	    \item An element $\xi\in F(L)$ is \emph{defined over a field} $K\c L$ if it belongs to the image of $F(K)\to F(L)$.
	    \item The \emph{essential dimension} of $\xi\in F(L)$ is \[\ed_k\xi:=\min_K\trdeg_kK,\] where the minimum is taken over all fields of definition $K$ of $\xi$.
	    \item The \emph{essential dimension} of the functor $F$ is defined to be
	\[\ed_kF:=\sup_{(K,\xi)}\ed_k\xi,\] where the supremum is taken over all pairs $(K,\xi)$, where $K$ is a field extension of $k$, and $\xi\in F(K)$. 
	\item If $\mc{X}$ is an algebraic stack over $k$, we obtain a functor	$F_{\mc{X}}:\on{Fields}_k\to\on{Sets}$ sending a field $K$ containing $k$ to the set of isomorphism classes of objects in $\mc{X}(K)$. We define the essential dimension of an object $\eta\in\mc{X}(K)$ as the essential dimension of its isomorphism class in $F_{\mc{X}}(K)$, and the essential dimension of $\mc{X}$ as $\ed_kF_{\mc{X}}$.
	\end{enumerate} 
	\end{defin}
	
	Consider the functor $\on{Rep}_A[n]:\on{Fields}_k\to \on{Sets}$ given by \[\on{Rep}_A[n](K):=\set{\text{$K$-isomorphism classes of $A_K$-modules of dimension $\leq n$}}\] for every field extension $K/k$, and such that for every inclusion $K\c L$, the corresponding map $\on{Rep}_A[n](K)\to \on{Rep}_A[n](L)$ is induced by tensor product. For every $n\geq 1$, we define \[r_A(n):=\on{ed}_k\on{Rep}_A[n].\]
    
    We can do more: $r_A(n)$ is the essential dimension of an algebraic stack over $k$. Since stack-theoretic methods are central to this work, we explain this construction in detail. We start by choosing a presentation of $A$ as a quotient of a finitely generated free algebra \[A=k\set{x_1,\dots,x_r}/I,\] where $I$ is a two-sided ideal of $A$. We denote by $a_1,\dots,a_r$ the images of $x_1,\dots,x_r$ in $A$. 

	For every $d\geq 0$, consider the affine space \[X_d:=\prod_{i=1}^r\on{M}_{d\times d,k}.\] The group $\on{GL}_d$ acts on $X_d$ by simultaneous conjugation.
	
	Let $K/k$ be a field extension, and let $M$ be a $d$-dimensional $A_K$-module. By fixing a $K$-basis for $M$, left multiplication by $a_1,\dots,a_r$ gives rise to $d\times d$ matrices with entries in $K$, yielding a $K$-point $\alpha=(\alpha_1,\dots,\alpha_r)$ of $X_d$. If we choose a different basis for $M$, and $g\in \on{GL}_d(K)$ is the matrix of this base change, the new $K$-point associated to $M$ will be $g\cdot\alpha$. Moreover, 
	\begin{equation}\label{poleq}
	P(\alpha_1,\dots,\alpha_r)=0 \text{ for each $P\in I$}.
	\end{equation}
	Let $Y_d$ be the closed $\on{GL}_d$-invariant subscheme of $X_d$ defined by the polynomial equations of (\ref{poleq}). We can form the stacks \[\mc{R}_A[n]:=\coprod_{d=1}^n[Y_d/\on{GL}_d],\qquad \mc{R}_A:=\coprod_{d=1}^{\infty}[Y_d/\on{GL}_d],\] where $[Y_d/\on{GL}_d]$ denotes the quotient stack construction. The algebraic stacks $\mc{R}_A[n]$ are of finite type over $k$, and $\mc{R}_A$ is locally of finite type over $k$; they are not necessarily smooth.  We claim that \[\ed_k\mc{R}_A[n]=r_A(n).\] 
	
	We have just seen that the $\on{GL}_d(K)$-orbits in $Y_d(K)$ bijectively correspond to the isomorphism classes of $d$-dimensional $A_K$-modules. By \cite[Example 2.6]{genericity0}, the $K$-points of $[Y_d/\on{GL}_d]$ bijectively correspond to the $\on{GL}_d(K)$-orbits of $Y_d(K)$. We have thus constructed a natural bijection between $K$-points of $\mc{R}_A[n]$ and isomorphism classes of $A_K$-modules of dimension at most $n$, that is, the functors $F_{\mc{R}_A[n]}$ and $\on{Rep}_A[n]$ are naturally isomorphic. In particular, $\ed_k\mc{R}_A[n]=r_A(n)$, as claimed.
	
	It is easy to see that $\mc{R}_A[n]$ is independent of the choice of the generators of $A$, up to isomorphism.
	
	We conclude this section with the following observation, which will be used during the proof of \Cref{trichotomy-finite}.
	\begin{lemma}\label{rosen}
		Assume that $k$ is algebraically closed. Let $G$ be a connected algebraic group over $k$, and let $H\c G$ be a closed subgroup, either finite or connected. Let $X$ be a $G$-variety (not necessarily irreducible), and let $Y$ be an irreducible $H$-variety. Assume that there exists an $H$-equivariant rational map $f:Y\to X$ such that for any $G$-orbit in $X$ only finitely many $H$-orbits of $Y$ are mapped to it. Then \[\ed_k[X/G]\geq \trdeg_kk(Y)^H.\]
	\end{lemma}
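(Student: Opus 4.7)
The plan is to combine Rosenlicht's theorem on generic quotients with the fact that, for any morphism of stacks $\mc{X}\to Y$ with $Y$ an algebraic space, the essential dimension of an object of $\mc{X}$ is at least that of its image in $Y$. First I would reduce to the case in which $X$ is irreducible and $G\cdot f(Y)$ is dense in $X$. Indeed, $Z:=\overline{G\cdot f(Y)}$ is an irreducible closed $G$-invariant subvariety of $X$ (being the image of the irreducible variety $G\times Y$ under $(g,y)\mapsto g\cdot f(y)$), and the closed embedding $[Z/G]\hookrightarrow[X/G]$ induces an injection on isomorphism classes of $K$-points for every field extension $K/k$; hence $\ed_k[X/G]\geq\ed_k[Z/G]$, and we may replace $X$ by $Z$.

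Next, Rosenlicht's theorem yields a $G$-invariant dense open $X_0\subseteq X$ with a geometric quotient $\pi_X\colon X_0\to X_0/G$ satisfying $k(X_0/G)=k(X)^G$, and similarly a dense open $Y_0\subseteq Y$ admitting a geometric quotient $\pi_Y\colon Y_0\to Y_0/H$ with $k(Y_0/H)=k(Y)^H$ (for $H$ finite the existence of $Y/H$ is elementary; for $H$ connected it is again Rosenlicht, which is the role of the hypothesis on $H$). After shrinking $Y_0$ so that $f$ is regular on it and $f(Y_0)\subseteq X_0$, the $H$-invariant morphism $\pi_X\circ f\colon Y_0\to X_0/G$ descends to a rational map $\overline{f}\colon Y_0/H\dashrightarrow X_0/G$ that is dominant because $G\cdot f(Y)$ is dense in $X$, and whose generic fibres are finite by the hypothesis on $f$. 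Consequently $\trdeg_k k(Y)^H=\dim(Y_0/H)\leq\dim(X_0/G)=\trdeg_k k(X)^G$.

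It remains to prove that $\ed_k[X/G]\geq\trdeg_k k(X)^G$. Let $\eta$ be the generic point of $X$, which lies in $X_0$, and let $\xi_\eta\in[X_0/G](k(X))$ be the object obtained by composing $\Spec k(X)\to X_0$ with the atlas morphism $X_0\to[X_0/G]$. The natural morphism $[X_0/G]\to X_0/G$ sends $\xi_\eta$ to the generic point of $X_0/G$, whose residue field is $k(X_0/G)=k(X)^G$; by naturality any field of definition of $\xi_\eta$ is also a field of definition of its image, so $\ed_k\xi_\eta\geq\trdeg_k k(X)^G$. Combining this with the sub-functor inclusion $F_{[X_0/G]}\hookrightarrow F_{[X/G]}$ coming from the open immersion of stacks yields $\ed_k[X/G]\geq\ed_k\xi_\eta\geq\trdeg_k k(X)^G\geq\trdeg_k k(Y)^H$. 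The main technical obstacle is this last step: one needs to verify the monotonicity of essential dimension along the morphism to the geometric quotient, as well as the sub-functor property of the open immersion. Both rest on the observation that if an object of $[X_0/G]$ is defined over a subfield $K_0\subseteq K$, then so are its image in $X_0/G$ and the compatible isomorphisms witnessing descent; the reduction in the first step and the dimension count in the second are comparatively routine.
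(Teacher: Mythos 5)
Your proof is correct and follows essentially the same route as the paper: reduce to irreducible $X$, apply Rosenlicht's theorem to obtain generic geometric quotients $Y_0/H$ and $X_0/G$, use the generically finite fibres of the induced map to compare dimensions, and conclude by lifting the generic point of $X_0/G$ to a point of the quotient stack. The only (cosmetic) difference is that you replace $X$ by $\overline{G\cdot f(Y)}$ to force dominance, whereas the paper simply passes to the irreducible component containing $\overline{f(Y)}$ and lets the fibre dimension theorem do the work without dominance.
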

	
	\begin{proof}
		Since $G$ is connected, every irreducible component of $X$ is $G$-stable. Since $Y$ is irreducible, there exists a component $X_0$ of $X$ such that $\cl{f(Y)}\c X_0$. We clearly have $\ed_k[X/G]\geq \ed_k[X_0/G]$, hence we may assume that $X=X_0$ is irreducible.
		
		We may find invariant open subsets $V\c Y$ and $U\c X$ such that $f(V)\c U$ and such that there exist geometric quotients $V/H$ and $U/G$. This follows from an application of Rosenlicht's Theorem \cite[Theorem 2]{rosenlicht1956} (the quoted result of Rosenlicht only applies to connected groups, but it is well known that the quotient $Y/H$ exists if $H$ is finite).
		
		The induced morphism $V/H\to U/G$ has generically finite fibers by assumption. By the fiber dimension theorem \[\dim U/G\geq \dim V/H=\trdeg_kk(V)^H.\] The projection $[U/G]\to U/G$ is surjective, so there exist a field extension $K/k$ and a $K$-point $\xi$ of $[X/G]$ mapping to the generic point of $U/G$. Then \[\ed_k\xi\geq\dim U/G\geq \trdeg_kk(V)^H.\qedhere\]
	\end{proof}	
	
	\section{Representation types}\label{prelimalg}
	Let $k$ be an arbitrary field, and let $A$ be a finitely generated $k$-algebra. In this section we define the representation type of $A_{\cl{k}}$ and give some examples. The following definitions are due to Drozd \cite{drozd}.
		
	\begin{defin}
		Let $\Lambda$ be a $k$-algebra. A $\Lambda$-\emph{representation} of $A$ is an $A-\Lambda$-bimodule $N$, that is free of finite rank as a right $\Lambda$-module. 
		
		 We say that $N$ is \emph{strict} if for each pair of $\Lambda$-modules $M$ and $M'$ such that $N\otimes_{\Lambda} M\cong N\otimes_\Lambda M'$ as $A$-modules one has $M\cong M'$ as $\Lambda$-modules. 
		 One may also think of $N$ as a functor, see \cite[\S 2]{crawley1988tame}.		
	\end{defin}	

\begin{defin}\label{reptype}
	Let $k$ be an algebraically closed field and let $A$ be a finitely generated $k$-algebra.
	\begin{itemize}
		\item We say that $A$ is \emph{of finite representation type} if there are at most finitely many isomorphism classes of indecomposable $A$-modules.
		\item The algebra $A$ is \emph{tame} if it is not of finite representation type and if, for every positive integer $d$, there exists a finitely generated $k$-algebra of the form $k[x]\c\Lambda\c k(x)$, together with a finite collection $\set{N_j}$ of $\Lambda$-representations of $A$, such that any $d$-dimensional indecomposable representation of $A$ is isomorphic to $N_j\otimes_{\Lambda}M$ for some $j$ and some $1$-dimensional $\Lambda$-module $M$.
		\item We call $A$ \emph{wild} if there exists a strict $k\set{x,y}$-representation $N$ of $A$ such that, for every $k\set{x,y}$-module $M$, the representation $N\otimes_{k\set{x,y}}M$ is indecomposable.
	\end{itemize}	
	If $k$ is an arbitrary field, we say that $A$ is \emph{of finite representation type} if there are at most finitely many isomorphism classes of indecomposable $A$-modules. 
\end{defin}
	
	Since in \Cref{trichotomy-finite} the field $k$ is algebraically closed, \Cref{reptype} is sufficient to understand the statement of the theorem. It seems natural to prove analogous results for an algebra $A$ over a more general field, and we do so in \Cref{proof}. In \Cref{trichotomy1}, we will only assume that $k$ is perfect and that $A$ is finite-dimensional and of finite representation type. In \Cref{trichotomy2} and \Cref{trichotomy3}, the minimal assumption to make our argument work is that $A_{\cl{k}}$ is tame or wild, respectively. Therefore, we do not need to introduce more subtle notions of tameness and wildness over arbitrary fields. Nevertheless, see \Cref{variants} for a discussion of possible variants of the definitions.
	
	\begin{example}\label{nilpotent}
	Let $m\geq 1$. If $A=k[x]/(x^m)$, $n$-dimensional $A_K$-modules correspond to conjugacy classes of $K$-linear endomorphisms having index of nilpotency at most $m$. The indecomposable representations correspond to nilpotent Jordan blocks of size at most $m$, and these are all defined over the base field $k$. Therefore $r_A(n)=0$. The algebra $A$ is of finite representation type.
\end{example}

\begin{example}\label{quaternion}
	Let $k=\Q$, and let \[A=\Q\set{i,j}/(i^2=j^2=-1, ij=-ji)\] be the quaternion algebra over $\Q$. Since $A$ is a group algebra over a field of characteristic zero, it is of finite representation type. Let $K$ be the field of fractions of $\Q[a,b]/(a^2+b^2+1)$, and let $M$ be the $2$-dimensional $A_K$-module given by 
	\begin{equation*}
	i\mapsto \begin{pmatrix}
	a & -b \\
	b & a
	\end{pmatrix},\qquad
	j\mapsto \begin{pmatrix}
	b & -a \\
	-a & -b
	\end{pmatrix}.
	\end{equation*}
	In \cite[Proposition 6.3]{bensonreichstein}, it is shown that $\ed_kM=1$.
	\end{example}

	\begin{example}\label{basicexample}
	Let $A=k[t]$. Isomorphism classes of $A_K$-modules correspond to conjugacy classes of $K$-linear endomorphisms of $K^n$. These are classified by the rational canonical form, hence $r_A(n)=n$ (we refer the reader to \cite{reichstein2010essential} for the details). The algebra $A_{\cl{k}}$ is the prototypical example of an algebra of tame representation type.
\end{example}

	\begin{example}\label{polynomial}
	Let $A=k[x,y]$ be a polynomial algebra in two variables. Representations of $A_K$ correspond to pairs of commuting matrices with entries in $K$. In \cite[Lemma 1]{drozd1972representations}, a strict $\cl{k}\set{x,y}$-representation of $A$ of rank $32$ is given. This shows that $A_{\cl{k}}$ is wild, so according to \Cref{trichotomy-finite}(c), $r_A(n)$ grows quadratically in $n$. 
	\end{example}
	
	\begin{example}
	Let $A:=k\set{x,y,z}/I$, where $I$ is the ideal generated by all monomials of degree $2$ in $x,y,z$. Then $A_{\cl{k}}$ is an example of a finite-dimensional wild algebra; see \cite[(1.2)]{ringel1974representation}.
	\end{example}

	\section{Fields of definition for representations}
	In this section we adapt the methods of \cite[\S 5]{biswasdhillonhoffmann} to the setting of representations of algebras.	
	Let $A$ be a finitely generated $k$-algebra, let $K$ be a field containing $k$, and let $M$ be an $A_K$-module. We may view $M$ as a $K$-point of $\mc{R}_A$, as explained in \Cref{essdim}. We may then associate to $M$ its \emph{residue gerbe} $\mc{G}$ in $\mc{R}_A$, and its \emph{residue field} $k(M):=k(\mc{G})$; we refer the reader to \cite[Chapitre 11]{laumonmoretbailly} for the definitions. 
	
	\begin{lemma}\label{arithmetic-geometric}
	   We have \[\ed_kM=\ed_{k(M)}M+\trdeg_kk(M).\] 
	\end{lemma}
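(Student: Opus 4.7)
The plan is to use general properties of the residue gerbe $\mc{G}$ to reduce the identity to additivity of transcendence degrees. The key preliminary observation is that any field of definition of $M$ inside $K$ must contain $k(M)$. The object $M$ corresponds to a morphism $\Spec K\to\mc{R}_A$ which factors through $\mc{G}\to\Spec k(M)$, yielding a canonical inclusion $k(M)\hookrightarrow K$. If $N\in\mc{R}_A(L)$ is defined over a subfield $L\c K$ and satisfies $N\otimes_L K\cong M$, then $N$ represents the same point of $|\mc{R}_A|$ as $M$, so $\Spec L\to\mc{R}_A$ likewise factors through $\mc{G}$, giving an inclusion $k(M)\hookrightarrow L$ whose composition with $L\hookrightarrow K$ is the canonical inclusion $k(M)\hookrightarrow K$.

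Granted this, the inequality $\ed_k M\geq\ed_{k(M)}M+\trdeg_k k(M)$ is immediate. Let $L\c K$ be a field of definition of $M$ achieving $\trdeg_k L=\ed_k M$. By the previous paragraph, $k(M)\c L$, so $L$ may also be viewed as a field of definition of $M$ over the base $k(M)$, and hence $\trdeg_{k(M)}L\geq\ed_{k(M)}M$. Additivity of transcendence degree in the tower $k\c k(M)\c L$ then gives
\[\ed_k M=\trdeg_k L=\trdeg_k k(M)+\trdeg_{k(M)}L\geq\trdeg_k k(M)+\ed_{k(M)}M.\]

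Conversely, choose $L'\c K$ containing $k(M)$ such that $M$ is defined over $L'$ and $\trdeg_{k(M)}L'=\ed_{k(M)}M$. Then $L'$ is automatically a field of definition of $M$ over $k$, so
\[\ed_k M\leq\trdeg_k L'=\trdeg_k k(M)+\trdeg_{k(M)}L'=\trdeg_k k(M)+\ed_{k(M)}M,\]
which combines with the previous display to yield the desired equality.

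The only substantive step is making the first paragraph rigorous: one needs to verify that every object of $\mc{R}_A$ over a field representing the same point of $|\mc{R}_A|$ as $M$ factors (uniquely up to $2$-isomorphism) through the residue gerbe $\mc{G}$, in a manner compatible with its structure morphism to $\Spec k(M)$. This is the defining universal property of the residue gerbe recorded in \cite[Chapitre 11]{laumonmoretbailly}, applied to $\mc{R}_A$.
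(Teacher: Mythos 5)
Your argument is correct and is essentially identical to the paper's own proof: both directions rest on the fact that $k(M)$ is contained in every field of definition of $M$, combined with additivity of transcendence degree in the tower $k\subseteq k(M)\subseteq L$. The extra justification you give for that containment via the universal property of the residue gerbe is exactly what the paper's phrase ``by construction'' is implicitly invoking.
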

	\begin{proof}
	By construction, the field $k(M)$ is contained in any field of definition for $M$. Let $K_0$ be a field of definition for $M$ (viewed as an element of $\on{Rep}_{A}[n](K)$) such that $\ed_kM=\trdeg_kK_0$. Then $k(M)\c K_0$ and $K_0$ is a field of definition for $M$ (viewed as an element of $\on{Rep}_{A_{k(M)}}[n](K)$). We deduce that $\trdeg_{k(M)}K_0\geq \ed_{k(M)}M$, hence \[\ed_kM=\trdeg_kK_0=\trdeg_kk(M)+\trdeg_{k(M)}K_0\geq \trdeg_kk(M)+\ed_{k(M)}M.\] 
	
	On the other hand, if $K_1$ is a field of definition for $M$ (viewed as an element of $\on{Rep}_{A_{k(M)}}[n](K)$) such that $\trdeg_{k(M)}K_1=\ed_{k(M)}M$, then $K_1$ is also a field of definition for $M$ (viewed as an element of $\on{Rep}_{A}[n](K)$), hence \[\ed_kM\leq \trdeg_kK_1=\trdeg_kk(M)+\trdeg_{k(M)}K_1=\trdeg_kk(M)+\ed_{k(M)}M.\qedhere\]
	\end{proof}

	\begin{rmk}
	Let $K/k$ be a field extension, and let $M$ be an $A_K$-module. Roughly speaking, the residue gerbe of $M$ parametrizes all pairs $(L,N)$, where $L/k$ is a field extension, $N$ is an $A_L$-module, and $M_E\cong N_E$ for some extension $E/k$ containing $K$ and $L$. 
	
	By construction, $k(M)$ is contained in every field of definition of $M$. Moreover, since $k(M)$ depends only on the residue gerbe of $M$, for every field extension $L/K$ we have $k(M)=k(M_L)$.
		
	Since $k(M)$ is contained in all fields of definition of $M$, it is clear that if $M$ is defined over $k(M)$, then $k(M)$ is the minimal field of definition for $M$. However, it is not always the case that $k(M)$ is a field of definition for $M$. It may even happen that a minimal field of definition does not exist (but see \Cref{thm11bensonreichst} below for a positive result). 
	
	As an example, let $k=\Q$, and take $A$, $K$ and $M$ as in \Cref{quaternion}. By \cite[Proposition 6.3]{bensonreichstein}, $M$ does not have a minimal field of definition. Since $A_{\cl{K}}\cong M_{2\times 2}(\cl{K})$, the module $M_{\cl{K}}$ is the $2$-dimensional vector representation of $M_{2\times 2}(\cl{K})$, which is defined over $\Q$. It follows that $\Q(M)=\Q(M_{\cl{K}})=\Q$.	
	\end{rmk}

	The residue gerbe $\mc{G}$ of $M$ is clearly non-empty. By \cite[Th\'eor\`eme 11.3]{laumonmoretbailly}, $\mc{G}$ is an algebraic stack of finite type over $k(\mc{G})$. Therefore, there exists a (smooth surjective) morphism $U\to \mc{G}$, where $U$ is an algebraic space of finite type over $k(\mc{G})$. Passing to an fppf cover of $U$ if necessary, we may assume that $U$ is a scheme. Since $\mc{G}$ is non-empty, $U$ is non-empty. The Nullstellensatz then guarantees the existence of a finite field extension $l/k(\mc{G})$ for which $U(l)\neq\emptyset$, hence such that $\mc{G}(l)\neq \emptyset$. We let \[d:=[l:k(\mc{G})]<\infty.\] We choose an object $V\in\mc{G}(l)$, and set  \[R:=\on{End}_{k(\mc{G})}(\cl{V})\] where $\cl{V}$ denotes the representation of $M$ over $k(\mc{G})$ obtained from $V$ by restriction of scalars to $k(\mc{G})$. To state the main theorem of this section, we first need to give a definition.
	
	\begin{defin}
		Let $\Lambda$ be a finite-dimensional $k$-algebra. A projective $\Lambda$-module $M$ has \emph{rank} $r\in\Q_{>0}$ if the direct sum $M^{\oplus n}$ is free of rank $nr$ for some $n\in\Z_{>0}$ with $nr\in\Z_{>0}$. We let $\on{Mod}_{\Lambda,r}$ be the category of projective modules of rank $r$.
	\end{defin}
	
	The following result is an analogue of \cite[Theorem 5.3]{biswasdhillonhoffmann} for representations of algebras.
	
	\begin{thm}\label{secondgerbe}
		In the above situation, consider a field $K\supseteq k(\mc{G})$. Then $\mc{G}(K)$ is equivalent to the category of projective right $R_K$-modules of rank $1/d$, compatibly with extension of scalars. In particular, all objects in $\mc{G}(K)$ are isomorphic (Noether-Deuring Theorem), and for the $k(\mc{G})$-algebra $R$ and the integer $d$ defined above, we have
		\[\ed_{k(\mc{G})}\mc{G}=\ed_{k(\mc{G})}(\on{Mod}_{R,1/d})=\ed_{k(\mc{G})}(\on{Mod}_{R/j(R),1/d}).\]
	\end{thm}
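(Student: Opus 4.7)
My plan follows the strategy of \cite[Theorem 5.3]{biswasdhillonhoffmann}, adapted to representations of associative algebras. The central object is the $(A_{k(\mc G)}, R)$-bimodule $\cl V$: as a left $A_{k(\mc G)}$-module it carries an $R$-action on the right by $k(\mc G)$-algebra endomorphisms. For every $K\supseteq k(\mc G)$ I would set up the pair of functors
\[
P \longmapsto \cl V_K\otimes_{R_K} P, \qquad N \longmapsto \on{Hom}_{A_K}(\cl V_K, N),
\]
and show that they define a quasi-inverse equivalence $\mc G(K)\simeq \on{Mod}_{R_K, 1/d}$, compatibly with scalar extension in $K$.

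First I would verify that $N:=\cl V_K\otimes_{R_K} P$ lies in $\mc G(K)$ whenever $P$ has rank $1/d$. By definition there exists $n\geq 1$ with $P^{\oplus nd}\cong R_K^{\oplus n}$ as right $R_K$-modules; tensoring with $\cl V_K$ yields $N^{\oplus nd}\cong \cl V_K^{\oplus n}$ as left $A_K$-modules. The restriction-extension identity gives $\cl V_l\cong V^{\oplus d}$, and after passing to a sufficiently large extension $E$ containing both $K$ and $l$ one has $V_E\cong M_E$, hence $\cl V_E\cong M_E^{\oplus d}$. Combined with the previous isomorphism, Krull-Schmidt applied over $E$ forces $N_E\cong M_E$, which is precisely the defining condition for $N\in\mc G(K)$.

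To prove that the two functors are quasi-inverse, I would check the unit and counit after faithfully flat base change to a splitting field $E$. Over $E$ one has $\cl V_E\cong M_E^{\oplus d}$ and $R_E\cong \on{M}_{d\times d}(\on{End}_{A_E}(M_E))$, so the question reduces to the classical Morita equivalence induced by the progenerator $M_E$; since both constructions commute with base change, the unit and counit isomorphisms descend to $K$. The Noether-Deuring assertion then falls out: two objects of $\mc G(K)$ correspond to rank $1/d$ projective $R_K$-modules which become isomorphic after base change to $E$ (where $R_E$ is a matrix algebra and such projectives are unique up to conjugation), and are therefore isomorphic already over $K$ by descent.

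Finally, to obtain $\ed_{k(\mc G)}(\on{Mod}_{R, 1/d})=\ed_{k(\mc G)}(\on{Mod}_{R/j(R), 1/d})$, I would invoke the standard fact that for a finite-dimensional $k(\mc G)$-algebra $R$ the reduction functor $-\otimes_R R/j(R)$ induces an equivalence between finitely generated projective $R$-modules and finitely generated projective $R/j(R)$-modules, preserving rank and natural in base field extension. The main technical obstacle I anticipate is the careful bookkeeping of how this reduction interacts with possibly inseparable extensions $K/k(\mc G)$ (where $j(R_K)$ need not coincide with $j(R)\otimes_{k(\mc G)}K$), together with tracking the rank $1/d$ condition through the equivalences; these subtleties, however, should only affect intermediate categories and not the final equality of essential dimensions over $k(\mc G)$.
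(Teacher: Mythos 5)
Your proposal is essentially the paper's own proof: the paper disposes of this theorem by citing \cite[Theorem 5.3, Corollary 5.4]{biswasdhillonhoffmann} and asserting that the argument carries over verbatim (with restriction of scalars along $l/k(\mc{G})$ playing the role of pushforward, as in \Cref{generalklinear}), and your sketch is a faithful unwinding of exactly that argument, down to the Morita reduction over a splitting field and the idempotent-lifting step for $R/j(R)$. The one point to be careful about is that the isomorphisms $\cl{V}_l\cong V^{\oplus d}$ and $\cl{V}_E\cong M_E^{\oplus d}$ require $l/k(\mc{G})$ to be separable (for inseparable $l$ the module $\cl{V}_E$ in general only carries a filtration with graded pieces isomorphic to $M_E$, which need not split), but this can always be arranged because $\mc{G}$ is a gerbe whose automorphism groups are unit groups of finite-dimensional algebras, hence smooth.
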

	
	\begin{proof}
		The proof is the same as that of \cite[Theorem 5.3, Corollary 5.4]{biswasdhillonhoffmann}.
	\end{proof}
		
	\begin{rmk}\label{generalklinear}
		The same result holds for any algebraic stack $\mc{X}$ over $k$ whose restriction to $\on{Fields}_k^{\on{op}}$ is a $k$-linear fibered category such that for every finite extension $L/K$ the pullback functor $\mc{X}(K)\to\mc{X}(L)$ admits a right adjoint. In the case of vector bundles on a curve, the right adjoint is given by the pushforward of a bundle, and in the case of $A$-modules it is given by restriction of scalars. The proof in this more general setting is again identical to that of \cite[Theorem 5.3]{biswasdhillonhoffmann}. 
	\end{rmk}
	
	The next result is \cite[Theorem 1.1]{bensonreichstein}, without any separability assumption. We will not need this result in the sequel. For the definition of fields of dimension $\leq 1$, see \cite[\S II.3]{serre1997galois} or \cite[p. 2]{bensonreichstein}.
	
	\begin{cor}\label{thm11bensonreichst}
		Assume that $k$ is a field of dimension $\leq 1$ (for example, a $C_1$-field). Let $M$ be an $A_K$-module, where $K$ is an algebraic extension of $k$. Then $M$ has a minimal field of definition $k\c F\c K$, of finite degree over $k$. In fact, $F=k(M)$.
	\end{cor}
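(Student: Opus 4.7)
The plan is to invoke \Cref{secondgerbe} and show that the residue gerbe $\mc{G}$ of $M$ admits a $k(M)$-point; the Noether--Deuring clause of that theorem then forces $M$ to descend to $k(M)$, proving $F=k(M)$ is a minimal field of definition.

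First I would show $[k(M):k]<\infty$. Since $K/k$ is algebraic and $M$ is a finite-dimensional $K$-vector space, the action of the generators of $A$ on $M$ involves only finitely many elements of $K$, so $M$ descends to a finite subextension $K_0\subseteq K$ of $k$. Since $k(M)$ embeds into every field of definition of $M$, we have $k(M)\hookrightarrow K_0$, hence $[k(M):k]<\infty$. Consequently $k(M)$ and every finite extension of $k(M)$ still has dimension $\leq 1$, so the Brauer groups of all such fields vanish.

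The main step is exhibiting an object of $\mc{G}(k(M))$. In the Wedderburn decomposition $R/j(R)=\prod_i M_{m_i}(D_i)$, each $D_i$ is a central division algebra over a finite field extension $F_i$ of $k(M)$, so the triviality of $\Br(F_i)$ forces $D_i=F_i$ and $R/j(R)=\prod_i M_{m_i}(F_i)$. By \Cref{secondgerbe}, a $k(M)$-point of $\mc{G}$ is the same as a projective right $R$-module of rank $1/d$ over $k(M)$, and since projective modules lift uniquely through the nilpotent ideal $j(R)$, it suffices to produce one over $R/j(R)$. Every projective right module for $\prod_i M_{m_i}(F_i)$ is a direct sum $\bigoplus_i (F_i^{m_i})^{\oplus a_i}$, and unwinding the definition of rank shows the rank-$1/d$ condition forces $a_i=m_i/d$. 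The divisibility $d\mid m_i$ itself is forced by the existence of the rank-$1/d$ projective $R_K$-module corresponding to $M$. Setting $P_0:=\bigoplus_i (F_i^{m_i})^{\oplus m_i/d}$ and lifting through $j(R)$ gives a $k(M)$-point of $\mc{G}$, whence by the Noether--Deuring clause of \Cref{secondgerbe} its base change to $K$ is isomorphic to $M$.

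The main technical obstacle is verifying the divisibility $d\mid m_i$: after base change, $F_i\otimes_{k(M)}K$ can split into several fields (or Artinian local rings, in the inseparable case), and one must check that the rank of the $R_K$-module coming from $M$ distributes across those factors in such a way that the resulting multiplicities agree within each Galois orbit and force the divisibility consistently. Once that bookkeeping is carried out, the remaining parts of the argument are formal consequences of \Cref{secondgerbe}.
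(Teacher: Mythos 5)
Your proposal follows essentially the same route as the paper: reduce to the residue gerbe via \Cref{secondgerbe}, note that $k(M)/k$ is finite so $k(M)$ still has dimension $\leq 1$, use vanishing of Brauer groups to write $R/j(R)$ as a product of matrix algebras over fields, and deduce $d\mid n_i$ from the fact that $M$ itself gives a rank-$1/d$ projective $R_K$-module. The divisibility bookkeeping you flag as the main obstacle is exactly the one-line assertion the paper makes, and it is immediate since each simple factor of $R_K/j(R_K)$ lying over $M_{n_i}(L_i)$ still has matrix size $n_i$, so reducing $P^{\oplus d}\cong R_K$ modulo the radical forces $d\,a_{ij}=n_i$ on every factor.
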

	
	\begin{proof}
		We keep the notation of \Cref{secondgerbe}.	Take for $\mc{G}$ the residue gerbe of $M$. The field extension $k(\mc{G})/k$ is finitely generated. It is also algebraic, since $k(\mc{G})\c K$. Hence $k(\mc{G})$ is a finite extension of $k$. Since $k$ has dimension $\leq 1$, $k(\mc{G})$ also has dimension $\leq 1$. Therefore \[R/j(R)=\prod_{i}\on{M}_{n_i\times n_i}(L_i),\] where the $L_i$ are finite field extensions of $k(\mc{G})$; see \cite[\S II.3, Proposition 5]{serre1997galois}. Since $M$ is defined over $K$, every integer $n_i$ is divisible by $d$, so there exists an $R/j(R)$-module of rank $1/d$. Now \Cref{secondgerbe} implies that $M$ is already defined over $k(M)$.
	\end{proof}
	
	Let $K$ be a field containing $k$, and let $M$ be an $A_K$-module. We use the techniques developed so far to give upper bounds on $\ed_kM$, by estimating $\ed_{k(M)}M$ and $\trdeg_kk(M)$ separately.
	
		\begin{lemma}\label{lemma42}
		Let $M$ be an indecomposable $A_K$-module. Then \[\dim_K\on{End}(M)/j(\on{End}(M))\leq \dim_KM.\]
	\end{lemma}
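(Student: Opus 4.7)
The plan is to exploit the structure of endomorphism rings of indecomposable modules of finite length via Fitting's lemma, and then apply Nakayama's lemma to relate $\End(M)/j(\End(M))$ to a quotient of $M$.

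First I would observe that since $M$ is finite-dimensional over $K$, it has finite length as an $A_K$-module (any chain of submodules is a chain of $K$-subspaces), so $E:=\End_{A_K}(M)$ is a finite-dimensional $K$-algebra. Because $M$ is indecomposable, Fitting's lemma implies that $E$ is local, and hence $D:=E/j(E)$ is a division ring, finite-dimensional over $K$.

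Next I would view $M$ as a left $E$-module. The action of $E$ commutes with that of $A_K$, so $j(E)\cdot M$ is both an $E$-submodule and an $A_K$-submodule of $M$. Since $M$ is nonzero and finitely generated over $K$ (hence over $E$), Nakayama's lemma yields $j(E)\cdot M\neq M$. Thus $M/j(E)M$ is a nonzero module over $D=E/j(E)$, which is a division ring containing $K$ in its center. Every nonzero $D$-module has $K$-dimension at least $\dim_K D$, so
\[
\dim_K D\leq \dim_K \bigl(M/j(E)M\bigr)\leq \dim_K M,
\]
which is the desired inequality.

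I do not anticipate any significant obstacle; the only point that requires attention is verifying that Fitting's lemma applies (ensured by finite length) and that the left/right conventions for the action of $E$ on $M$ are consistent with Nakayama's lemma, both of which are standard.
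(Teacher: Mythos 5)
Your proof is correct and follows essentially the same route as the paper: indecomposability gives that $\on{End}(M)$ is local (the paper states this directly, you justify it via Fitting's lemma), and then Nakayama's lemma shows $M/j(\on{End}(M))M$ is a nonzero module over the division algebra $D=\on{End}(M)/j(\on{End}(M))$, forcing $\dim_K D\leq \dim_K M$. The extra care you take with finite length and the left/right module conventions is fine but not a substantive difference.
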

	
	\begin{proof}
		Since $M$ is indecomposable, the $K$-algebra $\on{End}(M)$ is local, therefore $D=\on{End}(M)/j(\on{End}(M))$ is a division algebra. By Nakayama's lemma, $j(\on{End}(M))M\neq M$, so $M/j(\on{End}(M))M$ is a non-zero left $D$-module. Hence $\dim_KD\leq \dim_KM$.
	\end{proof}	
	
	\begin{lemma}\label{edkm}
		Let $M$ be a non-zero finite-dimensional $A_K$-module. Then \[\ed_{k(M)}M\leq \dim_KM-1.\]
	\end{lemma}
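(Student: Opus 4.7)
My plan is to invoke \Cref{secondgerbe}, which identifies $\ed_{k(M)} M$ with $\ed_{k(M)}(\on{Mod}_{R/j(R), 1/d})$, where $R = \on{End}_{A_{k(M)}}(\bar V)$, $\bar V$ has $k(M)$-dimension $dn$ with $n := \dim_K M$, and $d = [l : k(M)]$. The task then reduces to bounding the essential dimension of this stack of rank-$1/d$ projective modules over the semisimple $k(M)$-algebra $R/j(R)$.

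Writing $R/j(R) \cong \prod_i \on{M}_{m_i}(D_i)$ via Artin--Wedderburn (with $D_i$ division algebras over $k(M)$), I would first verify that a rank-$1/d$ projective $(R/j(R))_K$-module $P$, whenever it exists, is unique up to isomorphism: one must have $P \cong \prod_i N_i^{m_i/d}$ (after base change to a splitting extension), where $N_i$ is the standard simple module. Hence $\on{Mod}_{R/j(R), 1/d}$ is a gerbe over $k(M)$, banded by the automorphism group $\prod_i \on{GL}_{m_i/d}(D_i^{\on{op}})$ of the generic object.

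To produce the bound $n - 1$, the plan is to exhibit a natural $\P^{n-1}$-type parameter space for this gerbe. Concretely, one can enrich $M$ by a distinguished line $[v] \in \P(M)$, obtaining a $\P^{n-1}$-bundle $\widetilde{\mc{G}}$ over the gerbe. Since the residual $\Gm$-scaling on $v$ acts trivially on the isomorphism class of $M$, the forgetful functor $\widetilde{\mc{G}} \to \mc{G}$ is essentially surjective after possibly passing to a separable extension, so $\ed_{k(M)} \mc{G} \leq \ed_{k(M)} \widetilde{\mc{G}}$. Via the standard ``change of basis sending $v$ to $e_1$'' normalization, the stack $\widetilde{\mc{G}}$ embeds into an appropriate twisted projective space (a Brauer--Severi variety when $R/j(R)$ does not split over $k(M)$) of dimension exactly $n - 1$, yielding the desired inequality.

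The main obstacle is verifying the reduction step from $\widetilde{\mc{G}}$ back to $\mc{G}$: one must ensure that every $K$-point $M$ of $\mc{G}$ lifts to a $K$-point $(M,[v])$ of $\widetilde{\mc{G}}$ defined over a field whose transcendence degree over $k(M)$ matches that of a point on the $(n-1)$-dimensional Brauer--Severi variety. This involves choosing the line $[v]$ compatibly with the descent data of $M$, and is the place where the structure of the automorphism group (and its trivially acting $\Gm$-center) is essential.
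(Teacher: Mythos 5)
Your first step --- reducing via \Cref{secondgerbe} to bounding $\ed_{k(M)}(\on{Mod}_{R/j(R),1/d})$ and identifying this functor as a gerbe via Artin--Wedderburn --- matches the paper, whose proof is a direct citation of \cite[Corollary 5.5]{biswasdhillonhoffmann} with \Cref{lemma42} substituted for \cite[Lemma 4.2]{biswasdhillonhoffmann}. The gap is in the second half. The stack $\widetilde{\mc{G}}$ of pairs $(M,[v])$ does not embed into a (twisted) projective space: the automorphism group of $(M,[v])$ is the stabilizer of the line $[v]$ in $\on{Aut}_{A_K}(M)$, which strictly contains the scalars $\Gm$ as soon as $\on{End}_{A_K}(M)\neq K$ (for instance whenever $M$ is decomposable), so $\widetilde{\mc{G}}$ is not an algebraic space and admits no monomorphism into a scheme. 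A morphism $\widetilde{\mc{G}}\to X$ to an $(n-1)$-dimensional scheme bounds $\ed_{k(M)}\widetilde{\mc{G}}$ by $n-1$ only if the fibres --- gerbes banded by these line-stabilizers --- have essential dimension $0$, which is precisely the nontrivial point and is not addressed. Your construction does work when $M_{\cl{K}}$ is a brick: there $\on{Aut}(M)=\Gm$, the projectivization of the universal twisted module is an honest Brauer--Severi variety of dimension $n-1$, and any line in $M$ gives a $K$-point of it whose residue field is a field of definition.

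In general the correct parameter space is not $\P(M)$ but a product of generalized Brauer--Severi varieties $\on{SB}_{r_h}(E_h)$, one for each isotypic component $W_h^{\oplus b_h}$ of $\cl{V}$, where $E_h=\on{End}(W_h)/j(\on{End}(W_h))$ is a division algebra: the existence of a rank-$1/d$ projective $(R/j(R))_{F'}$-module is equivalent to the index of each $(E_h)_{F'}$ dropping appropriately, which is exactly what these varieties detect. Their total dimension is bounded by $\dim_KM-1$ only after invoking \Cref{lemma42}, i.e.\ $\dim_{k(M)}E_h\leq\dim_{k(M)}W_h$ --- an input that never appears in your proposal. Without some version of that inequality there is no way to compare the size of the parameter space (which lives inside the endomorphism algebra, not inside $M$) with $\dim_KM$; this is precisely the substitution the paper flags as the one modification needed to the argument of \cite{biswasdhillonhoffmann}.
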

	
	\begin{proof}
	Entirely analogous to \cite[Corollary 5.5]{biswasdhillonhoffmann}, after replacing \cite[Lemma 4.2]{biswasdhillonhoffmann} by \Cref{lemma42}.
	\end{proof}
	
	\begin{lemma}\label{algclosedreduction}
		Let $A$ be a finitely generated $k$-algebra. Assume that for every field extension $K/k$, where $K$ is algebraically closed, and for every indecomposable $A_K$-module $N$, one has \[\on{trdeg}_kk(N)\leq 1.\] Then, for every extension $K/k$ and every non-zero $A_K$-module $M$, \[\on{ed}_kM\leq 2\dim_KM-1.\]
	\end{lemma}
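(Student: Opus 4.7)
The plan is to combine \Cref{arithmetic-geometric}, which gives $\ed_k M = \ed_{k(M)}M + \trdeg_k k(M)$, with \Cref{edkm}, which bounds the first summand by $\dim_K M - 1$. With these in hand, the desired inequality $\ed_k M \leq 2\dim_K M - 1$ reduces to showing $\trdeg_k k(M) \leq \dim_K M$, and this is what I would establish using the hypothesis on indecomposable modules over algebraically closed fields.

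First I would pass to the algebraic closure: by the remark following \Cref{arithmetic-geometric}, $k(M) = k(M_{\cl{K}})$, so I may replace $M$ by $M_{\cl{K}}$ and assume that $K$ itself is algebraically closed (note that $\dim_K M$ is preserved by this reduction). Applying Krull-Schmidt to the finite-dimensional $A_K$-module $M$, I then write $M \cong \bigoplus_{j=1}^{s} N_j^{m_j}$ as a direct sum of pairwise non-isomorphic indecomposable summands, with $s \leq \sum_j m_j \leq \dim_K M$.

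Next I would produce a common small field of definition for the isomorphism class of $M$. By hypothesis $\trdeg_k k(N_j) \leq 1$ for every $j$. Running the Nullstellensatz argument from the paragraph preceding \Cref{secondgerbe} on the residue gerbe of each $N_j$ produces a finite extension $l_j/k(N_j)$ and an $A_{l_j}$-module $V_j$ whose image in $|\mc{R}_A|$ agrees with that of $N_j$. Since $K$ is algebraically closed, each $l_j$ embeds into $K$ over $k(N_j) \subseteq K$; the uniqueness statement in \Cref{secondgerbe} then forces $V_j \otimes_{l_j} K \cong N_j$. The compositum $l := l_1 \cdots l_s \subseteq K$ therefore satisfies
\[
\trdeg_k l \leq \sum_{j=1}^{s} \trdeg_k l_j = \sum_{j=1}^{s} \trdeg_k k(N_j) \leq s \leq \dim_K M,
\]
and the $A_l$-module $\bigoplus_j (V_j \otimes_{l_j} l)^{m_j}$ is a model of $M$ over $l$. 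Since $k(M)$ is contained in every field of definition of $M$, we conclude $\trdeg_k k(M) \leq \trdeg_k l \leq \dim_K M$, as required.

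The main obstacle is the passage from the abstract residue field $k(N_j)$ to an honest field of definition $l_j$ of the same transcendence degree; the Nullstellensatz argument preceding \Cref{secondgerbe}, together with the Noether--Deuring part of \Cref{secondgerbe}, is precisely what is needed to accomplish this. Once each indecomposable summand is realized over a subfield of transcendence degree at most $1$, the remainder of the proof is a routine transcendence-degree bookkeeping.
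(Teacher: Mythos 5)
Your proof is correct and follows essentially the same route as the paper: reduce via \Cref{arithmetic-geometric} and \Cref{edkm} to the bound $\trdeg_k k(M)\leq \dim_K M$, pass to $\cl{K}$ using $k(M)=k(M_{\cl{K}})$, and sum the contributions of the indecomposable summands. The only difference is that where the paper simply asserts $\trdeg_k k(M)\leq\sum_j \trdeg_k k(M_j)$, you justify it by producing an explicit field of definition (the compositum of the finite extensions $l_j/k(N_j)$ supplied by the Nullstellensatz argument and \Cref{secondgerbe}); this is a legitimate and welcome filling-in of a step the paper leaves implicit.
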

	\begin{proof}
		By \Cref{arithmetic-geometric} and \Cref{edkm}, it suffices to show that \[\trdeg_kk(M)\leq\dim_KM\] for every field extension $K/k$ and every $A_K$-module $M$. Let $M$ be an $A_K$-module, for some field extension $K/k$. Since $k(M_L)=k(M)$ for every field extension $L/K$, we may assume that $K$ is algebraically closed. If we express $M$ as a sum of indecomposable $A_K$-modules $M_j$, we have
		\[\trdeg_kk(M)\leq \sum_j\trdeg_kk(M_j).\]
		By assumption, each summand is either $0$ or $1$, and there are at most $\dim_KM$ terms in the decomposition of $M$. Therefore $\trdeg_kk(M)\leq\dim_KM$, as desired. 
	\end{proof}

	\section{Proof of Theorem \ref{trichotomy-finite}}\label{proof}
	\Cref{trichotomy-finite} will follow from the next three propositions.
	
\begin{prop}\label{trichotomy1}
	Let $k$ be a perfect field and $A$ be a finite-dimensional $k$-algebra. Assume that $A$ is of finite representation type. Then there exists a constant $C$ such that \[r_A(n)\leq C\] for every $n\geq 1$.
\end{prop}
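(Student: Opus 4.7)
My plan is to reduce to the algebraically closed case handled by \cite[Theorem 1.3]{bensonreichstein}, via Galois descent combined with the gerbe-theoretic tools of \Cref{arithmetic-geometric} and \Cref{secondgerbe}. First, I would argue that, since $k$ is perfect, $A_{\cl k}$ also has finite representation type, and that the indecomposable $A_{\cl k}$-modules $\cl N_1,\dots,\cl N_s$ are each defined over some finite separable subextension of $\cl k/k$. Then I would choose a finite Galois extension $L/k$ large enough that each $\cl N_i$ descends to an absolutely indecomposable $A_L$-module $N_i$. Consequently, every $A_K$-module $M$ satisfies $M_{\cl K}\cong\bigoplus_i\cl N_i^{a_i}$ for a unique multiplicity vector $(a_i)$ depending on $M$ and $K$.

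Given any $A_K$-module $M$ with $\dim_KM\leq n$, I would consider its residue gerbe $\mc G\subseteq \mc R_A$ and the associated residue field $k(M)$. By \Cref{arithmetic-geometric}, $\ed_kM=\trdeg_k k(M)+\ed_{k(M)}M$, so it suffices to bound each summand by a constant independent of $K$, $n$, and $M$. The first summand is handled by observing that the multiplicities $(a_i)$ are constant on $\mc G$, and that two $A_{\cl K}$-modules with the same multiplicities become isomorphic over a sufficiently large algebraic extension of $k$; therefore $\mc G$ lies over a single closed point of the coarse moduli space $|\mc R_A|$, whose residue field is generated over $k$ by the fields of definition of the occurring $\cl N_i$'s and is hence contained in $L$. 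In particular, $\trdeg_k k(M)=0$.

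For the second summand I would invoke \Cref{secondgerbe}, which gives $\ed_{k(M)}M=\ed_{k(M)}(\on{Mod}_{R/j(R),1/d})$, where $R/j(R)$ is a semisimple $k(M)$-algebra whose base change to $\cl K$ is $\prod_i M_{a_i}(\cl K)$. Galois descent through the finite group $\on{Gal}(L/k)$ presents $R/j(R)$ as a product of matrix algebras over division $k(M)$-algebras $D_O$, indexed by the Galois orbits $O$ of $\{\cl N_i\}$, and $\ed_{k(M)}(\on{Mod}_{R/j(R),1/d})$ is controlled by the indices $\on{ind}(D_O)$.

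The hard part will be establishing that these indices $\on{ind}(D_O)$ admit a uniform bound as $M$ varies: one must verify that each $D_O$ is carved out of a fixed Brauer-theoretic datum attached to $A$ and $L$ (concretely, the Galois descent of the endomorphism algebra of the orbit sum $\bigoplus_{i\in O}N_i$ considered as an $A_L$-module), rather than depending on the multiplicities $a_i$ of the specific $M$. Once this uniform bound is in place, summing over the finitely many Galois orbits $O$ yields $\ed_{k(M)}M\leq C_0$ for a constant $C_0=C_0(A,L)$, and hence $r_A(n)\leq C:=C_0$ for every $n\geq 1$, as desired.
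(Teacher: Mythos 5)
Your first half is sound and runs parallel to the paper's argument: the paper also reduces to the algebraically closed case via \cite[Theorem 1.3]{bensonreichstein} to conclude that $M_{\cl{K}}$ is defined over $\cl{k}$, hence $\trdeg_kk(M)=0$. The genuine gap is in the second half. You reduce the problem to bounding $\ed_{k(M)}(\on{Mod}_{R/j(R),1/d})$ uniformly in $M$ and $n$, correctly identify that this requires a uniform bound on the Brauer-theoretic data (the indices of the division algebras $D_O$ and, implicitly, the number of simple factors of $R/j(R)$), and then explicitly defer that step (``the hard part''). As written, the argument is conditional on precisely the quantitative input that makes the proposition true; without it you have not ruled out that $\ed_{k(M)}M$ grows with $n$, e.g.\ through the general bound $\ed_{k(M)}M\leq\dim_KM-1$ of \Cref{edkm}, which is linear in $n$ for decomposable $M$. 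You would also need to make precise \emph{how} the indices control $\ed_{k(M)}(\on{Mod}_{R/j(R),1/d})$; \Cref{secondgerbe} identifies the gerbe with a category of projective modules but does not by itself convert an index bound into an essential dimension bound.

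The paper closes this gap by a different and more elementary route that avoids analyzing $R/j(R)$ for arbitrary $M$ altogether. The key input is \cite[Theorem 3.3]{jensen1982homological}: every indecomposable $A_K$-module is a direct summand of some $(N_i)_K$, where $N_1,\dots,N_r$ are the finitely many indecomposable $A$-modules over $k$. Setting $m=\sum\dim_kN_i$, this gives two uniform bounds at once: every indecomposable $A_K$-module has dimension at most $m$, and there are at most $m$ isomorphism classes of them. Combining the first bound with \Cref{arithmetic-geometric} and \Cref{edkm} yields $\ed_kM_h\leq m-1$ for each indecomposable summand $M_h$ of $M$; the second bound shows the compositum of fields of definition of the distinct summands has transcendence degree at most $m(m-1)$, and Noether--Deuring descends $M$ itself to that compositum. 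If you want to salvage your approach, the same Jensen--Lenzing theorem together with \Cref{lemma42} (which bounds $\dim_{k(M)}$ of the relevant division algebras by the dimension of an indecomposable module, hence by $m$) is exactly the uniform input your ``hard part'' is missing; but some such input must be supplied, and your proposal does not supply it.
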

	Recall that, when $k$ is perfect, $A$ is of finite representation type if and only if $A_{\cl{k}}$ is; see \Cref{variants}.
\begin{proof}
	By assumption, there are at most finitely many indecomposable $A$-modules, up to isomorphism; we denote them by $N_1,\dots, N_r$. Let \[m:=\sum_{i=1}^r\dim_kN_i.\]
	
	Let $M$ be an $A_K$-module, for some field extension $K/k$. Fix an algebraic closure $\cl{K}$ of $K$. Since $k\c \cl{K}$, the field $\cl{K}$ contains an algebraic closure $\cl{k}$ of $k$. Since $\cl{k}$ is algebraically closed, the Brauer group of $\cl{k}$ is trivial. Therefore, $A_{\cl{k}}$ being finite-dimensional over $\cl{k}$, the hypotheses of \cite[Theorem 1.3]{bensonreichstein} apply to $F=\cl{k}$, $A_{\cl{k}}$ and $M_{\cl{K}}$. It follows that $M_{\cl{K}}$ is defined over a finite extension of $\cl{k}$, hence over $\cl{k}$. Thus \begin{equation}\label{zerotrdeg}\on{trdeg}_kk(M)=\on{trdeg}_{k}k(M_{\cl{K}})\leq \trdeg_k\cl{k}=0,\end{equation} hence $\on{trdeg}_kk(M)=0$ for every $A_K$-module $M$.
	
	Assume first that $M$ is indecomposable. By \cite[Theorem 3.3]{jensen1982homological}, $M$ is a direct summand of a module of the form $(N_i)_K$, hence (i) $\dim_K M\leq\dim_kN_i\leq m$ and (ii) there are at most $m$ isomorphism classes of indecomposable $A_K$-modules. Using \Cref{arithmetic-geometric}, \Cref{edkm}, (\ref{zerotrdeg}) and (i), we conclude that \begin{equation}\label{trichotomy1-eq}
	\ed_kM=\on{trdeg}_kk(M)+\ed_{k(M)}M\leq \dim_KM-1\leq m-1 
	\end{equation}
	when $M$ is indecomposable.
	
	If $M$ is not assumed to be indecomposable, consider the decomposition of $M$ in indecomposable summands $M_h$. For every $h$, we have shown that there exists a subfield $F_h$ of $K$ such that $\on{trdeg}_kF_h\leq m-1$ and $M_h$ is defined over $F_h$. If we let $F$ be the compositum of all the $F_h$, then by (ii) and (\ref{trichotomy1-eq}) we have $\on{trdeg}_kF\leq m(m-1)$. Since every indecomposable summand of $M$ is defined over $F$, by Noether-Deuring's Theorem $M$ is also defined over $F$, hence $\ed_kM\leq m(m-1)$. Since $M$ was arbitrary, we obtain $r_A(n)\leq m(m-1)$ for every $n\geq 0$.     
\end{proof}
	
	\begin{prop}\label{trichotomy2}
	Let $k$ be an arbitrary field, and let $A$ be a $k$-algebra. Assume that $A_{\cl{k}}$ is tame. Then there exists a constant $c>0$ such that \[cn-1\leq r_A(n)\leq 2n-1\] for every $n\geq 1$.    
	\end{prop}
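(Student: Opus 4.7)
The plan is to prove the two inequalities separately, using Lemma~\ref{algclosedreduction} for the upper bound and Lemma~\ref{rosen} for the lower bound.

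For the upper bound $r_A(n) \leq 2n-1$, by Lemma~\ref{algclosedreduction} it suffices to check that $\trdeg_k k(N) \leq 1$ whenever $K/k$ is algebraically closed and $N$ is an indecomposable $A_K$-module. Fix an embedding $\cl{k} \hookrightarrow K$, so that $A_K = (A_{\cl{k}})_K$. The tameness of $A_{\cl{k}}$ combined with the model-theoretic machinery of \cite{jensen1982homological, kasjanperiodicity} should extend Drozd's parametrization to the larger algebraically closed field $K$: for $d = \dim_K N$ there are finitely many pairs $(\Lambda_j, N_j)$ with $\cl{k}[x] \subseteq \Lambda_j \subseteq \cl{k}(x)$ and $N_j$ a $\Lambda_j$-representation of $A_{\cl{k}}$ of rank $d$, such that $N \cong (N_j)_K \otimes_{(\Lambda_j)_K} M'$ for some $j$ and some $1$-dimensional $(\Lambda_j)_K$-module $M'$. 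Such $M'$ is determined by a single element $\lambda \in K$ (the image of $x$ under the corresponding $K$-algebra map $(\Lambda_j)_K \to K$), so $N$ is defined over $\cl{k}(\lambda)$, a field of transcendence degree at most $1$ over $k$. Hence $k(N) \subseteq \cl{k}(\lambda)$ and $\trdeg_k k(N) \leq 1$.

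For the lower bound, I would first reduce to $k$ algebraically closed, using that $\ed_k M \geq \ed_{\cl{k}} M$ for any module over an extension of $\cl{k}$, which gives $r_A(n) \geq r_{A_{\cl{k}}}(n)$. As $A_{\cl{k}}$ is not of finite representation type, there is a dimension $d$ with infinitely many isomorphism classes of indecomposable $d$-dimensional $A$-modules, and by tameness a pair $(\Lambda,N)$ with $k[x] \subseteq \Lambda \subseteq k(x)$ and a $\Lambda$-representation $N$ of $A$ of rank $d$ for which $\lambda \mapsto [N_\lambda]$ is generically injective on a dense open $U \subseteq \A^1$. For each $m \geq 1$, I would form the $S_m$-equivariant rational map
\[
f \colon \A^m \dashrightarrow Y_{md}, \qquad (\lambda_1, \dots, \lambda_m) \mapsto N_{\lambda_1} \oplus \cdots \oplus N_{\lambda_m},
\]
with $S_m$ permuting the coordinates on the source and embedded in $\on{GL}_{md}$ as block permutation matrices on the target. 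By the Krull--Schmidt theorem together with the generic injectivity of $\lambda \mapsto [N_\lambda]$, each $\on{GL}_{md}$-orbit of $Y_{md}$ is hit by a unique $S_m$-orbit of $\A^m$, so Lemma~\ref{rosen} applies (with $G = \on{GL}_{md}$ and $H = S_m$) and gives
\[
r_A(md) \geq \ed_k [Y_{md}/\on{GL}_{md}] \geq \trdeg_k k(x_1,\dots,x_m)^{S_m} = m.
\]
Taking $m = \lfloor n/d \rfloor$ and using monotonicity of $r_A$ yields $r_A(n) \geq n/d - 1$, establishing the lower bound with $c = 1/d$.

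The main technical obstacle is the transfer of tameness underpinning the upper bound: Drozd's definition parametrizes indecomposables only over $\cl{k}$, whereas applying Lemma~\ref{algclosedreduction} demands that every indecomposable $(A_{\cl{k}})_K$-module, for arbitrary algebraically closed $K \supseteq \cl{k}$, still factor through one of the families $(\Lambda_j, N_j)$ after base change. Justifying this passage in the generality of the proposition, where $A$ is only required to be finitely generated, is where the logic and model-theoretic input from \cite{jensen1982homological, kasjanperiodicity} is indispensable and demands the most care; by contrast, the stack-theoretic Rosenlicht-style argument for the lower bound is comparatively routine once a suitable $1$-parameter family has been fixed.
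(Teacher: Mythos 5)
Your proposal matches the paper's proof in both halves: the lower bound via the $S_m$-equivariant specialization map $\A^m\dashrightarrow Y_{md}$, Krull--Schmidt, and \Cref{rosen}, and the upper bound via \Cref{algclosedreduction} once one knows every indecomposable module over an algebraically closed $K\supseteq k$ is defined over $\cl{k(\lambda)}$ for a single $\lambda$. The "transfer of tameness" you flag as the main obstacle is exactly \cite[Lemma 4.6(a)]{kasjanperiodicity}, which the paper invokes directly (with a footnote checking that the finite-dimensionality hypothesis there can be dropped for finitely generated $A$), so your argument is correct and essentially identical to the paper's.
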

	
	\begin{proof}
	We fix a set of generators $a_1,\dots,a_r$ of $A$. For every $d\geq 1$, we define $X_d $ and $Y_d$ as in \Cref{essdim}.
		
	Since $r_A(n)\geq r_{A_{\cl{k}}}(n)$, to prove the lower bound for $r_A(n)$ we may assume that $k$ is algebraically closed. Since $A$ is tame, by definition there exists a $\Lambda$-representation $N$ for some $k$-algebra $k[x]\c\Lambda\c k(x)$ parametrizing an infinite number of non-isomorphic indecomposable representations of some dimension $d$. We may view $N$ as the datum of $r$ square matrices of size $d$ and whose coefficients are rational functions of $x$. For all but finitely many $\lambda\in k$ we may specialize $x$ to $\lambda$ in these matrices, obtaining a matrix description of a $d$-dimensional indecomposable representation of $A$. In other words, $N$ defines a rational map $\A^1\dashrightarrow Y_d$.
	
	Similarly, considering $m\geq 1$ copies of $N$ gives a rational map \[\A^m_k\dashrightarrow Y_{md}\c X_{md}=(X_d)^{\oplus m}\] that is ${S}_m$-equivariant. Here $S_m$ acts by permuting the coordinates on the left, and permutes the factors $(X_d)^{\oplus m}$ on the right $S_m$. By the Krull-Schmidt Theorem, at most finitely many $S_m$-orbits map to the same $\on{GL}_{md}$-orbit. We have $r_A(md)=\ed_k\mc{R}_A[md]\geq \ed_k[Y_{md}/\on{GL}_{md}]$. Using \Cref{rosen} with $G=\on{GL}_{md}$ and $H=\on{S}_m$, we deduce the lower bound $r_A(md)\geq m$ for each $m\geq 0$. Since $r_A(n)$ is non-decreasing, the proof of the lower bound for $r_A(n)$ is complete.
	
	We now turn to the proof of the upper bound (so $k$ is again arbitrary). Let $M$ be an indecomposable $A_K$-module, where $K$ is an algebraically closed field containing $k$. Then $M$ is defined over $\cl{k(\lambda)}$ for some $\lambda\in\cl{K}$, by \cite[Lemma 4.6(a)]{kasjanperiodicity}.\footnote{In \cite[Lemma 4.6]{kasjanperiodicity} $A$ is supposed to be finite-dimensional over $k$, however this is not needed for the proof of \cite[Lemma 4.6(a)]{kasjanperiodicity}. The structure constants of $A$ form a countable set, but since $A$ is finitely generated only finitely many intervene in the logical expression which is used in the proof of \cite[Lemma 4.6(a)]{kasjanperiodicity}. This implies that the expression remains a first order formula in this more general
setting.} Since $k(M)=k(M_{\cl{K}})$, we obtain $\trdeg_kk(M)=\trdeg_kk(M_{\cl{K}})\leq \trdeg_k\cl{k(\lambda)}\leq 1$. We may now apply \Cref{algclosedreduction}, to obtain $r_A(n)\leq 2n-1$ for every $n\geq 0$. 
	\end{proof}
	
\begin{rmk}
	The rational maps appearing in the proof of \Cref{trichotomy2} (and also \Cref{trichotomy3} and \Cref{tameed} below), have already been constructed and used by de la Pe{\~n}a \cite[\S 1.4, 1.5]{delapena1991dimension}.
\end{rmk}	
	
	\begin{prop}\label{trichotomy3} 
	Let $k$ be an arbitrary field, and let $A$ be a $k$-algebra. Assume that $A_{\cl{k}}$ is wild. Then there exists a constant $c>0$ such that \[r_A(n)\geq cn^2-1\] for every $n\geq 1$.
	\end{prop}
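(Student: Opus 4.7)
The plan is to adapt the lower-bound argument of \Cref{trichotomy2}, replacing the one-parameter family witnessing tameness by the two-parameter family witnessing wildness, and enlarging the auxiliary group $H$ in \Cref{rosen} from $S_m$ to $\on{GL}_n$. As there, I would first reduce to the case in which $k$ is algebraically closed: for any field $K\supseteq \cl k$ and any $A_K$-module $M$, a $k$-field of definition $K_0\subseteq K$ of $M$ produces a $\cl k$-field of definition $K_0\cdot\cl k$ of transcendence degree at most $\trdeg_k K_0$, so $r_{A_{\cl k}}(n)\leq r_A(n)$.

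Since $A$ is wild (\Cref{reptype}), I would fix a strict $A-k\set{x,y}$-bimodule $N$ that is free of some finite rank $d\geq 1$ as a right $k\set{x,y}$-module, so that left multiplication by each generator $a_i$ of $A$ is encoded by a $d\times d$ matrix $F_i(x,y)$ with entries in $k\set{x,y}$. For every $n\geq 1$, sending $(X,Y)\in \on{M}_{n\times n}\times \on{M}_{n\times n}$, viewed as the $k\set{x,y}$-module $k^n$ in which $x,y$ act as $X$ and $Y$, to the $A$-module $N\otimes_{k\set{x,y}}k^n$ defines a morphism of $k$-varieties
\[
f_n:\on{M}_{n\times n}\times \on{M}_{n\times n}\longrightarrow Y_{nd}
\]
given in coordinates by evaluating the noncommutative polynomials $F_i(x,y)$ at $(X,Y)$ to obtain $nd\times nd$ block matrices. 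The relations cutting out $Y_{nd}\subseteq X_{nd}$ are automatic because $N$ is an $A$-bimodule. A direct check shows that $f_n$ is equivariant for the block embedding $\on{GL}_n\hookrightarrow \on{GL}_{nd}$, $g\mapsto I_d\otimes g$, and strictness of $N$ translates into the statement that $f_n$ sends distinct $\on{GL}_n$-orbits into distinct $\on{GL}_{nd}$-orbits.

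With the hypotheses of \Cref{rosen} thus verified (taking $G=\on{GL}_{nd}$, $H=\on{GL}_n$, $X=Y_{nd}$ and $Y=\on{M}_{n\times n}\times \on{M}_{n\times n}$), I obtain
\[
r_A(nd)\geq \ed_k[Y_{nd}/\on{GL}_{nd}]\geq \trdeg_k k(\on{M}_{n\times n}\times\on{M}_{n\times n})^{\on{GL}_n}.
\]
A generic pair of $n\times n$ matrices has centralizer in $\on{GL}_n$ equal to the scalar subgroup $\Gm$, so the generic $\on{GL}_n$-orbit has dimension $n^2-1$ and the invariant field has transcendence degree $2n^2-(n^2-1)=n^2+1$. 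Hence $r_A(nd)\geq n^2+1$ for every $n\geq 1$. Since $r_A$ is non-decreasing, an elementary interpolation then shows that $c:=1/(2d^2)$ satisfies $r_A(n)\geq cn^2-1$ for every $n\geq 1$.

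The step that will require the most care is the translation of strictness---a categorical statement about the functor $M\mapsto N\otimes_{k\set{x,y}}M$---into the orbit-level injectivity demanded by \Cref{rosen}. Once one observes that this functor is compatible with extension of scalars, the required geometric conclusion follows from strictness applied over each residue field.
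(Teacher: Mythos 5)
Your proposal is correct and follows essentially the same route as the paper's own proof: reduce to $\cl k$, turn the strict $k\set{x,y}$-representation $N$ of rank $d$ into a $\on{GL}_m$-equivariant map $\on{M}_{m\times m}^{\oplus 2}\to Y_{md}$ by evaluating the defining matrices, use strictness to get orbit-level injectivity, and apply \Cref{rosen} to obtain $r_A(md)\geq m^2+1$, whence the quadratic lower bound by monotonicity. The only (harmless) differences are cosmetic: the paper phrases $f$ as a rational map and leaves the final constant implicit, while you note it is a genuine morphism and verify $c=1/(2d^2)$ works.
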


	\begin{proof}
	Let $a_1,\dots,a_r$ be a set of generators of $A$ and, define $Y_d$ as in \Cref{essdim}, for every $d\geq 1$.
	
    We have $r_A(n)\geq r_{A_{\cl{k}}}(n)$, hence we may assume that $k$ is algebraically closed. Since $A$ is wild, there exists a strict $k\set{x,y}$-representation $N$ of $A$, where $k\set{x,y}$ is the free $k$-algebra on two generators.
    By definition, we have an isomorphism $N\cong k\set{x,y}^{\oplus d}$ of right $k\set{x,y}$-modules, for some $d\geq 1$. Note that the positive integer $d$ is uniquely determined, because it coincides with the dimension of the $k$-vector space $N\otimes_{k\set{x,y}} (k\set{x,y}/(x,y))$.
    
    For every $n\geq 1$, $n$-dimensional representations of $k\set{x,y}$ are in bijective correspondence with arbitrary pairs of square matrices of size $n$ (no relations are enforced). Therefore, we may view $N$ as the datum of $r$ square matrices $P_i(x,y)$ of size $d$ and whose coefficients belong to $k\set{x,y}$. If a $k\set{x,y}$-module $M$ of dimension $m$ corresponds to a pair of matrices $(Q_x,Q_y)$, then $N\otimes_{k\set{x,y}}M$ corresponds to the matrices $P_i(Q_x,Q_y)$. In other words, with the notations of \Cref{essdim}, the association $M\mapsto N\otimes_{k\set{x,y}}M$ gives a rational map
		\[f:\on{M}^{\oplus 2}_{m\times m,k}\dashrightarrow Y_{md}\c X_{md}.\] If $(Q'_x,Q'_y)$ and $(Q_x,Q_y)$ can be obtained one from the other by a simultaneous conjugation, their image in $Y_{md}$ are also related by simultaneous conjugation. This means that the map $f$ is $\on{GL}_{m}$-equivariant, where $\on{GL}_m$ acts by simultaneous conjugation on the left and via the diagonal inclusion $\on{GL}_m\c\on{GL}_{md}$ on the right. 
		
	    The assumption that $N$ is strict implies that $f$ maps distinct $\on{GL}_m$-orbits in $\on{M}^{\oplus 2}_{m\times m}$ to distinct $\on{GL}_{md}$-orbits in $Z_{md}$. We may apply \Cref{rosen} with $H=\on{GL}_m$, $G=\on{GL}_{md}$, $Y=\on{M}^{\oplus 2}_{m\times m}$ and $X=Z_{md}$, and we obtain that $\ed_k[Z_{md}/\on{GL}_{md}]\geq 1+m^2$. Since $r_A(md)\geq\ed_k[Z_{md}/\on{GL}_{md}]$, this shows that that $r_A(md)\geq 1+m^2$ for every $m\geq 1$. Since $r_A(m)$ is non-decreasing, the conclusion follows.
	\end{proof}
	
	We conclude this section with two remarks.
	
	\begin{rmk}\label{groupalgebras}
		Assume that $A=kG$ is the group algebra of a finite group $G$. In this case, representations of $kG$ correspond to representations of $G$. The question of the essential dimension of representations of $G$ was studied in \cite{karpenko2014numerical} and \cite{bensonreichstein}. In the non-modular case $r_{kG}(n)\leq |G|/4$ for every $n\geq 0$, as proved in \cite[Proposition 9.2]{karpenko2014numerical} and \cite[Remark 6.5]{karpenko2014numerical}. On the other hand, if $\on{char}k=p>0$ and $G$ contains a subgroup isomorphic to $(\Z/p\Z)^2$, then \cite[Theorem 14.1]{karpenko2014numerical} shows that $r_{kG}(n)$ becomes arbitrarily large. In \cite[Theorem A.5]{karpenko2014numerical}, it is found that $r_{kG}(n)$ grows at least linearly in $n$.
		
		To see how the results of this paper relate to \cite{karpenko2014numerical} and \cite{bensonreichstein}, recall that there is a complete classification of the representation type of finite group algebras. In characteristic zero, $kG$ is of finite representation type for any finite group $G$. If $\on{char}k=p$ is positive, it is a classical theorem of D. Higman that $\cl{k}G$ is of finite representation type if and only if a Sylow $p$-subgroup of $G$ is cyclic (see \cite[Theorem 2, Theorem 4]{higman1954indecomposable}). Tame group algebras only occur in characteristic $2$, and have been classified by Bondarenko and Drozd \cite{bondarenko1977representation}; every other group algebra is wild.
		
		In view of this classification, \Cref{trichotomy-finite} (and more generally the results of this section) gives a common framework for all the previous results on essential dimension of group algebras. Moreover, it strengthens the lower bound of \cite[Theorem A.5]{karpenko2014numerical} for wild group algebras in characteristic $p$ (i.e. the majority of them) and  for $n$ large enough.
	\end{rmk}

	\begin{rmk}\label{variants}
	We now discuss the assumptions of the previous propositions in the context of representation type over arbitrary fields. Let $k$ be an arbitrary field, and let $A$ be a finite-dimensional $k$-algebra (this restriction is necessary because most of the theorems that we will quote are not known when $A$ is only assumed to be finitely generated). 
	
	If $A_{\cl{k}}$ is of finite representation type, so is $A$; see \cite[Lemma 3.2]{jensen1982homological}. The converse is not true, in general. For example, let $k$ be a non-perfect field of characteristic $p>0$, pick $u\in k^{\times}\setminus k^{\times p}$, and set $K:=k[y]/(y^p-u)$ and $A:=K[x]/(x^p)$. Then $A$ is of finite representation type (see \Cref{nilpotent} below), while $A_{\cl{k}}$ is not; see \cite[Remark 3.4]{jensen1982homological}. On the other hand, we always have $r_A(n)\geq r_{A_{\cl{k}}}(n)$, and so $r_A(n)$ is not bounded from above. It follows that \Cref{trichotomy1} does not hold without the assumption that $k$ is perfect. Note that if $k$ is perfect, then $A$ is of finite representation type if and only if $A_{\cl{k}}$ is of finite representation type; see \cite[Theorem 3.3]{jensen1982homological}. 
	
	There is a notion of \emph{generically tame} $k$-algebra $A$, due to Crawley-Boevey \cite{crawley1991tame}, which generalizes the notion of tameness over an arbitrary field $k$; see also \cite[p. 646]{skowronski2008trends}. If $k$ is perfect and $A$ is generically tame, then $A_{\cl{k}}$ is generically tame; see \cite{kasjan2001base} when $k$ is infinite, and \cite{mendez2013remark} when $k$ is finite. Moreover, by a theorem of Crawley-Boevey \cite{crawley1991tame} (see also \cite[Theorem 1.13(4)]{skowronski2008trends}) the algebra $A_{\cl{k}}$ is generically tame if and only if it is tame. Therefore, the assumptions of \Cref{trichotomy2} hold when $k$ is perfect and $A$ is generically tame. 
	
	Finally, there is a notion of \emph{semi-wild} algebra over an arbitrary field; see \cite[p. 247]{drozd}. If $A$ is a semi-wild $k$-algebra, by \cite[Proposition 2]{drozd} $A_{\cl{k}}$ is also semi-wild. Applying \cite[Theorem 1, Corollary 1]{drozd}, we deduce that $A_{\cl{k}}$ is actually wild. Thus the assumptions of \Cref{trichotomy3} hold when $k$ is arbitrary and $A$ is semi-wild.
\end{rmk}

\section{Algebras admitting a one-dimensional representation}
Let $A$ be a finitely generated $k$-algebra. The purpose of this section is to prove \Cref{nondecreasing1}, which shows that, in some circumstances, in the definition of $r_A(n)$ it is enough to consider modules of dimension exactly $n$, as opposed to $\leq n$. The results of this section hold for group algebras and quiver algebras, hence this reconciles our notation with that of \cite{karpenko2014numerical} and \cite{bensonreichstein}.

\begin{lemma}\label{deffield1}
	Let $K$ be a field containing $k$, $F=K(t_1,\dots, t_r)$ be a purely transcendental field extension of transcendence degree $r$. If $M$ an indecomposable $A_K$-module, then $M_F$ is indecomposable.
\end{lemma}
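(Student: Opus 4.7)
By induction on $r$ it suffices to treat the case $r=1$, so write $F=K(t)$. I would prove the statement by showing that the endomorphism ring of $M_F$ contains no non-trivial idempotents. Since $M$ is a finite-dimensional $K$-vector space, the natural map
\[
\on{End}_{A_K}(M)\otimes_K F\longrightarrow \on{End}_{A_F}(M_F)
\]
is an isomorphism. Set $E:=\on{End}_{A_K}(M)$. Since $M$ is indecomposable and finite-dimensional, $E$ is a finite-dimensional local $K$-algebra, so $D:=E/j(E)$ is a (finite-dimensional) division $K$-algebra and $j(E)$ is a nilpotent two-sided ideal.

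The next step is a lifting-of-idempotents argument. Since $j(E)$ is nilpotent, so is $j(E)\otimes_K K(t)$, viewed as a two-sided ideal of $E\otimes_K K(t)$. Because idempotents lift along surjections whose kernels are nilpotent two-sided ideals, and since $0$ and $1$ lift only to themselves, it suffices to prove that $D\otimes_K K(t)$ has no non-trivial idempotents. I would prove the stronger statement that $D\otimes_K K(t)$ is a domain. Identify $D\otimes_K K(t)$ with the localization of the polynomial ring $D[t]$ at the central multiplicative subset $K[t]\setminus\{0\}$. Since $D$ is a division ring, $D[t]$ is a domain: the leading coefficient of a product of two non-zero polynomials over $D$ is the product of the leading coefficients, which is non-zero because $D$ has no zero divisors. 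Localizing a domain at a central multiplicative subset again yields a domain, so $D\otimes_K K(t)$ is a domain, and therefore has only the idempotents $0$ and $1$.

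Putting these steps together, $\on{End}_{A_F}(M_F)=E\otimes_K K(t)$ has no non-trivial idempotents, so $M_F$ is indecomposable. The slightly delicate step is the reduction to the division quotient, which relies on the fact that $j(E)$ is nilpotent (using finite-dimensionality of $E$) and on the standard lifting principle for idempotents modulo a nilpotent ideal in a non-commutative ring; the rest of the argument is essentially a check that the polynomial ring over a division algebra is a domain, which is routine. The induction step for general $r$ is obtained by applying the $r=1$ case to the indecomposable $A_{K(t_1,\dots,t_{r-1})}$-module produced by the induction hypothesis.
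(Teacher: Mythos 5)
Your proof is correct, and its overall skeleton matches the paper's: reduce to $r=1$, identify $\on{End}_{A_F}(M_F)$ with $\on{End}_{A_K}(M)\otimes_K F$, pass to the division-algebra quotient $D$ modulo the radical, and show that $D\otimes_K K(t)$ has no zero divisors. Where you diverge is in the two key verifications. First, for the central point that $D\otimes_K K(t)$ stays a domain, the paper passes to the center $L$ of $D$, identifies $D\otimes_K K(t)$ with $D\otimes_L L(t)$, and invokes the fact that a central division algebra remains division under a purely transcendental extension (citing Schofield--Van den Bergh); you instead give the elementary argument that $D[t]$ is a domain because leading coefficients multiply, and that a central localization of a domain is a domain. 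Your route is more self-contained and avoids any input from the theory of central simple algebras (note that since $D\otimes_K K(t)$ is finite-dimensional over $K(t)$, your ``domain'' conclusion is in fact equivalent to the paper's ``division algebra'' conclusion). Second, the paper detects indecomposability via locality of $\on{End}_F(M_F)$, using the inclusion $j(\on{End}_K(M))\otimes_K F\subseteq j(\on{End}_F(M_F))$, whereas you use absence of nontrivial idempotents together with the nilpotence of $j(\on{End}_K(M))\otimes_K F$; both are standard and correct, and your idempotent argument only needs the easy direction (an idempotent congruent to $0$ or $1$ modulo a nil ideal equals $0$ or $1$). The trade-off is that the paper's version is shorter on the page by outsourcing the key fact to a reference, while yours is longer but entirely elementary.
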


\begin{proof}	
	It is enough to consider the case $F=K(t)$. Recall that a module is indecomposable if and only if its endomorphism algebra is local. Since $M$ is indecomposable, the algebra $\on{End}_K(M)$ is local, hence the quotient $D:=\on{End}_K(M)/j(\on{End}_K(M))$ is a division algebra over $K$. Denote by $L$ the center of $D$: it is a finite field extension of $K$. Since $L(t)/L$ is purely transcendental, $D\otimes_LL(t)$ is a central division algebra over $L(t)$ (this can be seen directly, or by appealing to \cite[Theorem 1.3]{schofield1992index}). Therefore, $D\otimes_KK(t)\cong D\otimes_LL\otimes_KK(t)\cong D\otimes_LL(t)$ is a division algebra over $K(t)$. We have an inclusion $j(\on{End}_K(M))\otimes_KF\c j(\on{End}_F(M_F))$, hence $\on{End}_F(M_F)/j(\on{End}_F(M_F))$ is a non-zero quotient of $D\otimes_KF \cong D\otimes_LL\otimes_KF \cong D\otimes_LL(t)$. It follows that $\on{End}_F(M_F)/j(\on{End}_F(M_F))\cong D\otimes_KF$ is a division algebra, hence $\on{End}_F(M_F)$ is local and $M_F$ is indecomposable.
\end{proof}

\begin{lemma}\label{summanddefined1}
	Let $L/K$ be an extension of fields containing $k$ and let $M$ be an indecomposable $A_K$-module. If one of the indecomposable summands of $M_L$ is defined over $K$, then $M_L$ is indecomposable. 
\end{lemma}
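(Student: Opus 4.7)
The plan is to show that any indecomposable summand of $M_L$ defined over $K$ must already exhaust all of $M_L$, using the fact that the endomorphism ring of an indecomposable finite-dimensional module is local together with Krull--Schmidt applied to a suitable power of $M$. First I would write $N_1 = P_L$ for some $A_K$-module $P$, and fix witnesses $\iota \colon P_L \hookrightarrow M_L$ and $\pi \colon M_L \twoheadrightarrow P_L$ with $\pi \iota = \on{id}_{P_L}$. Because base change preserves nonzero dimensions, a nontrivial decomposition of $P$ over $K$ would break $P_L = N_1$ nontrivially; since $N_1$ is indecomposable, so is $P$, and $E := \on{End}_{A_K}(P)$ is a local $K$-algebra.

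The main step is to realise $P$ as a direct summand of $M^n$ for some $n \ge 1$. Consider the composition map
\[
\mu \colon \on{Hom}_{A_K}(M, P) \otimes_K \on{Hom}_{A_K}(P, M) \longrightarrow E, \qquad g \otimes f \mapsto g \circ f,
\]
whose image $I$ is a two-sided ideal of $E$. Since $E$ is local, either $I = E$ or $I \subseteq j(E)$. In the latter case, $I \otimes_K L$ would sit inside $j(E) \otimes_K L$, which is a nilpotent ideal of $E \otimes_K L = \on{End}_{A_L}(P_L)$ (as $j(E)$ is nilpotent) and therefore lies in $j(\on{End}_{A_L}(P_L))$. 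But under the natural identifications $\on{Hom}_{A_L}(P_L, M_L) = \on{Hom}_{A_K}(P,M) \otimes_K L$ and $\on{Hom}_{A_L}(M_L, P_L) = \on{Hom}_{A_K}(M,P) \otimes_K L$, the element $\pi \otimes \iota$ in the domain maps both to $\pi \iota = \on{id}_{P_L}$ and into $I \otimes_K L$; since $\on{id}_{P_L}$ is a unit, this is a contradiction. Hence $I = E$, so writing $\on{id}_P = \sum_{i=1}^{n} g_i f_i$ with $f_i \colon P \to M$ and $g_i \colon M \to P$ gives a split embedding $P \hookrightarrow M^n$ via $p \mapsto (f_i(p))_i$ with retraction $(m_i) \mapsto \sum_i g_i(m_i)$.

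The endgame is then Krull--Schmidt together with a dimension count: since $M$ is indecomposable, any direct summand of $M^n$ is isomorphic to $M^a$ for some $a \ge 0$, and $P \neq 0$ forces $a \ge 1$; but then
\[
a\dim_K M = \dim_L P_L = \dim_L N_1 \le \dim_L M_L = \dim_K M
\]
forces $a = 1$, giving $P \cong M$, hence $N_1 \cong M_L$, and so $M_L = N_1$ is indecomposable. I expect the main obstacle to be the middle step --- the ideal-theoretic argument ruling out $I \subseteq j(E)$, which is where the hypothesis that $N_1$ is defined over $K$ enters essentially; the rest is straightforward bookkeeping.
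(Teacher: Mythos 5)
Your proof is correct, and it takes a genuinely different route from the paper's. The paper first reduces to finitely generated extensions $L/K$, splits off a purely transcendental part (handled by \Cref{deffield1}, which relies on the fact that a division algebra stays a division algebra under purely transcendental base change), and then treats the remaining finite extension by restricting scalars, using $M_L\cong M^{\oplus d}$ and $M'_L\cong (M')^{\oplus d}$ as $A_K$-modules together with Krull--Schmidt over $K$. You instead run a trace-ideal argument: the ideal $I=\on{im}\bigl(\on{Hom}(M,P)\otimes\on{Hom}(P,M)\to E\bigr)$ cannot sit in $j(E)$ because after base change it contains $\on{id}_{P_L}=\pi\iota$ while $j(E)\otimes_KL$ is nilpotent, so $I=E$ and $P$ splits off $M^{\oplus n}$; Krull--Schmidt plus the dimension count then forces $P\cong M$. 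This treats an arbitrary extension $L/K$ in one stroke, with no reduction step and no appeal to \Cref{deffield1} or to the behaviour of division algebras under transcendental extensions; the only inputs are locality of endomorphism rings of finite-dimensional indecomposables, nilpotence of $j(E)$ for the finite-dimensional algebra $E$, and the identification $\on{Hom}_{A_L}(P_L,M_L)\cong\on{Hom}_{A_K}(P,M)\otimes_KL$ --- the last of these you assert without comment, and it does deserve a line: it holds because $A$ is finitely generated, so $\on{Hom}_{A_K}(P,M)$ is the kernel of a map between finite-dimensional $K$-vector spaces built from the finitely many generators, and kernels commute with the flat base change $-\otimes_KL$. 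The paper's proof is shorter on the page because \Cref{deffield1} does the heavy lifting; yours is more self-contained and slightly more general in spirit.
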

If $A$ is finite-dimensional, this follows from \cite[Lemma 2.5]{kasjan2000auslander}.
\begin{proof}
	We may assume that $L/K$ is finitely generated. Let $F\c L$ be a purely transcendental extension of $K$ such that $L/K$ has finite degree. By \Cref{deffield1}, $M_F$ is indecomposable, thus we may assume that $L/K$ has finite degree $d$. By assumption, there exists an $A_K$-module $M'$ such that $M'_L$ is an indecomposable summand of $M_L$. There are isomorphisms $M_L\cong M^{\oplus d}$ and $M'_L\cong (M')^{\oplus d}$ of $A_K$-modules. This implies that $M'$ is a direct summand of $M$. Since $M$ is indecomposable, by the Krull-Schmidt Theorem we see that $M'=M$, and so $M_L=M'_L$ is indecomposable.
\end{proof}

\begin{lemma}\label{summanddefined2}
	Let $L/K$ be an extensions of fields containing $k$, and let $M$ be an $A_L$-module. Assume that $M=M'\oplus M''$, where $M'$ and $M''$ are also $A_L$-modules, and suppose that $M$ and every indecomposable summand of $M''$ are defined over $K$. Then $M'$ is defined over $K$ as well.
\end{lemma}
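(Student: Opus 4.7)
My plan is to reduce to the case where $M''$ is indecomposable over $L$ and then peel it off from a chosen $K$-structure on $M$ using Krull--Schmidt together with \Cref{summanddefined1} and the Noether--Deuring theorem. Specifically, I would decompose $M'' = \bigoplus_{j=1}^r M_j''$ into $L$-indecomposables and argue by induction on $r$: the case $r=0$ is trivial, and it suffices to show that whenever $M$ is defined over $K$ and $M = M^* \oplus M_1''$ with $M_1''$ indecomposable over $L$ and defined over $K$, then $M^*$ is defined over $K$. One then applies the inductive hypothesis to the decomposition $M^* = M' \oplus \bigoplus_{j \geq 2} M_j''$ to conclude.

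For the indecomposable step, I would write $M \cong \tilde{M}_L$ and $M_1'' \cong (N_1)_L$ for $A_K$-modules $\tilde{M}$ and $N_1$, noting that $N_1$ is necessarily $K$-indecomposable (otherwise its base change would decompose nontrivially). Decomposing $\tilde{M} = \bigoplus_i \tilde{N}_i$ into $K$-indecomposables and applying Krull--Schmidt over $L$ to $\bigoplus_i (\tilde{N}_i)_L \cong M^* \oplus M_1''$ shows that $(N_1)_L$ appears as an indecomposable summand of some $(\tilde{N}_j)_L$. Since $(N_1)_L$ is defined over $K$ and $\tilde{N}_j$ is $K$-indecomposable, \Cref{summanddefined1} forces $(\tilde{N}_j)_L$ to be indecomposable, so $(\tilde{N}_j)_L \cong (N_1)_L$; by Noether--Deuring this descends to an isomorphism $\tilde{N}_j \cong N_1$ of $A_K$-modules. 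Setting $\tilde{M}^* := \bigoplus_{i \neq j} \tilde{N}_i$ gives $\tilde{M} \cong N_1 \oplus \tilde{M}^*$, so $M_1'' \oplus (\tilde{M}^*)_L \cong \tilde{M}_L \cong M^* \oplus M_1''$; cancelling the common summand $M_1''$ via Krull--Schmidt yields $M^* \cong (\tilde{M}^*)_L$, so $M^*$ is defined over $K$.

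The only delicate point is ensuring that Krull--Schmidt and Noether--Deuring apply in the needed generality, but every module in sight is finite-dimensional over its base field, so each endomorphism algebra is finite-dimensional and therefore semi-perfect, which is sufficient for both results. The rest is bookkeeping.
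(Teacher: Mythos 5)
Your proof is correct and follows essentially the same route as the paper's: both compare the decomposition of a $K$-form of $M$ into $K$-indecomposables with the $L$-decomposition $M'\oplus M''$, using \Cref{summanddefined1} to show that a $K$-indecomposable summand whose base change shares a summand with $M''$ remains indecomposable over $L$. The only difference is organizational: you peel off the indecomposable summands of $M''$ one at a time by induction and cancel via Krull--Schmidt, whereas the paper partitions the $K$-indecomposable summands in a single pass; your version has the minor advantage of accounting for multiplicities automatically.
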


\begin{proof}
	We may write $M=N_L$ and $M''=N''_L$ for some $A_K$-modules $N$ and $N''$. Let $N=\oplus N_i$ be the decomposition of $N$ in indecomposable summands. We have: \[M'\oplus M''\cong M\cong \oplus_i (N_i)_L.\] For fixed $i$, if $(N_i)_L$ shares a direct summand with $M''$, then by the assumptions and by \Cref{summanddefined1} we see that it is an indecomposable summand of $M''$. Therefore, each $(N_i)_L$ is a direct summand of $M'$, $M''$, or both. Let $N'$ be the direct sum of those $N_i$ such that $(N_i)_L$ is a summand of $M'$, and let $N''$ be the direct sum of those $N_i$ such that $(N_i)_L$ is a summand of $M''$ but not of $M'$. Then $N=N'\oplus N''$ and $N'_L=M'$.	
\end{proof}

\begin{lemma}\label{summanddefined3}
	Let $L/k$ be a field extension and let $M$ be an $A_L$-module. Write $M=M'\oplus M''$ and assume that every indecomposale summand of $M''$ is defined over $k$. Then \[\ed_kM=\ed_kM'.\]
\end{lemma}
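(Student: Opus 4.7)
The plan is to prove the two inequalities $\ed_k M \leq \ed_k M'$ and $\ed_k M' \leq \ed_k M$ separately; both follow easily from the preceding lemma.

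For the inequality $\ed_k M \leq \ed_k M'$, the first step is to observe that the hypothesis implies $M''$ itself descends to $k$, since a direct sum of $A_L$-modules each defined over a subfield $K_0 \subseteq L$ is again defined over $K_0$. Next, I would pick an intermediate field $k \subseteq K \subseteq L$ with $\trdeg_k K = \ed_k M'$ over which $M'$ is defined. Since $M''$ is defined over $k \subseteq K$, the sum $M = M' \oplus M''$ is defined over $K$, yielding $\ed_k M \leq \trdeg_k K = \ed_k M'$.

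For the reverse inequality, the argument is a direct application of \Cref{summanddefined2}. I would choose an intermediate field $k \subseteq K \subseteq L$ with $\trdeg_k K = \ed_k M$ such that $M$ descends to $K$. Because every indecomposable summand of $M''$ is defined over $k$, it is \emph{a fortiori} defined over $K$. The hypotheses of \Cref{summanddefined2} are therefore met for the extension $L/K$, and we conclude that $M'$ is defined over $K$. Hence $\ed_k M' \leq \trdeg_k K = \ed_k M$.

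There is no real obstacle: the lemma is essentially a direct corollary of \Cref{summanddefined2}, together with the trivial observation that descent is preserved by forming direct sums and by enlarging the field of definition.
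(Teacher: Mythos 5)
Your proof is correct and follows essentially the same route as the paper: the inequality $\ed_kM\leq\ed_kM'$ is the easy direction (the paper dismisses it as ``clear'', though its displayed inequality there appears to have the two sides transposed), and the reverse inequality is obtained exactly as you do, by taking a minimal field of definition $K$ for $M$ and invoking \Cref{summanddefined2} to descend $M'$ to $K$. Your explicit justification of the easy direction via the descent of the direct sum $M''$ to $k$ is a welcome bit of extra care, but the argument is the same.
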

\begin{proof}
	It is clear that $\ed_kM'\leq\ed_kM$. Let $K/k$ be a field of definition for $M$ of minimal transcendence degree. By \Cref{summanddefined2}, $M'$ is also defined over $K$. It follows that $\ed_kM=\trdeg_kK\geq \ed_kM'$, hence $\ed_kM=\ed_kM'$, as desired.	
\end{proof}

\begin{prop}\label{nondecreasing1}
	Let $A$ be a finitely generated $k$-algebra, and assume that there exists a one-dimensional $A$-module over $k$. Then there exist an extension $K/k$ and an $n$-dimensional $A_K$-module $M$ such that $\ed_kM=r_A(n)$.
\end{prop}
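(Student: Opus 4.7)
The plan is a two-step reduction: first realise $r_A(n)$ as $\ed_k M_0$ for some single $A_K$-module $M_0$ of dimension $d\leq n$, and then pad $M_0$ with $n-d$ copies of the given one-dimensional module in order to obtain an $n$-dimensional module without changing the essential dimension.

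For the first step, by definition $r_A(n)=\sup \ed_k M$, where the supremum runs over all pairs $(K,M)$ with $K/k$ a field extension and $M$ an $A_K$-module satisfying $\dim_K M\leq n$. Remark (v) of the introduction bounds each such $\ed_k M$ by $rn^2$, where $r$ is the number of generators of $A$, so this supremum is a finite non-negative integer. Since essential dimensions are themselves non-negative integers, a bounded supremum of non-negative integers is automatically attained: we obtain an extension $K/k$ and an $A_K$-module $M_0$ with $\dim_K M_0=:d\leq n$ such that $\ed_k M_0=r_A(n)$. This attainment point is the only conceptually substantive ingredient, and it presents no real obstacle.

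For the second step, let $\mathbf{1}$ denote the one-dimensional $A$-module supplied by the hypothesis, and set $M:=M_0\oplus (\mathbf{1}_K)^{\oplus(n-d)}$. Then $\dim_K M=n$, and every indecomposable summand of $(\mathbf{1}_K)^{\oplus(n-d)}$ is isomorphic to $\mathbf{1}_K$, which is visibly defined over $k$. Applying \Cref{summanddefined3} with $M'=M_0$ and $M''=(\mathbf{1}_K)^{\oplus(n-d)}$ yields $\ed_k M=\ed_k M_0=r_A(n)$, as required.
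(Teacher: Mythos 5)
Your proposal is correct and follows essentially the same route as the paper: realise $r_A(n)$ as $\ed_k M'$ for a single module of dimension $d\leq n$, then pad with copies of the one-dimensional module and invoke \Cref{summanddefined3}. The only difference is that you explicitly justify why the supremum defining $r_A(n)$ is attained (finitely many possible integer values by the naive bound), a point the paper's proof takes for granted; this is a welcome clarification, not a divergence.
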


\begin{proof}
	Let $L/k$ be a field extension, and let $M'$ be a $d$-dimensional $A_L$-module such that $d\leq n$ and $\ed_kM'=r_A(n)$, and let $M_0$ be a one-dimensional representation of $A$. By \Cref{summanddefined3}, if $M:=M'\oplus (M_0)^{n-d}_L$, then $M$ is $n$-dimensional and $\ed_kM=\ed_kM'$. This shows that the value $r_A(n)$ is attained among $n$-dimensional modules.
\end{proof}

	\section{Preliminaries on representations of quivers}\label{prelimquiver}
The purpose of this section is to recall the definitions and results of the theory of quiver representations that are relevant to our discussion. We refer the reader to \cite[Chapters 1,2,3]{kirillov} and \cite{schiffler} for detailed accounts of the general theory,  and to \cite[Chapter 7]{kirillov} and \cite[Chapter 14]{simson} for discussions of representation type of quivers.

If $Q$ is a quiver, we denote by $Q_0$ the finite set of its vertices, and by $Q_1$ the finite set of arrows between them.
For every field $K$ there is an equivalence of categories between $K$-representations of $Q$ and modules over the path algebra $KQ$ of $Q$, that is natural with respect to field extensions $L/K$; see \cite[Theorem 5.4]{schiffler}. 

We denote by $\ang{\cdot,\cdot}$ the Tits form of $Q$. By definition, for every $\alpha,\beta\in \mathbb{R}^{Q_0}$ we have:
\[\ang{\alpha,\beta}=\sum_{i\in Q_0}\alpha_i\beta_i-\sum_{a:i\to j}\alpha_i\beta_j,\] where the second sum is over all arrows of $Q$. We denote by $(\cdot,\cdot)$ the associated symmetric bilinear form: $(\alpha,\beta):=\ang{\alpha,\beta}+\ang{\beta,\alpha}$ for every $\alpha,\beta\in\mathbb{R}^{Q_0}$. We let $q_Q$ be the quadratic form defined by $q_Q(\alpha)=\ang{\alpha,\alpha}$ for every $\alpha\in\mathbb{R}^{Q_0}$. We refer the reader to \cite[\S 1.5, \S 1.7]{kirillov} for the definitions of the Cartan matrix $C_Q$, the Weyl group, the simple reflections $s_i$, and the root system of $Q$. We denote by $e_i$ the canonical basis of the vector space $\R^{\Q_0}$. The \emph{fundamental region} is the set $F_Q$ of non-zero $\alpha\in\N^{Q_0}$ with connected support and $(\alpha,e_i)\leq 0$ for all $i$. An imaginary root $\alpha$ is called \emph{isotropic} if $\ang{\alpha,\alpha}=0$ and \emph{anisotropic} if $\ang{\alpha,\alpha}<0$. The \emph{dimension vector} of the representation $M$ is the vector $(\dim M_i)_{i\in Q_0}$.

The quiver $Q$ is said to be of \emph{finite representation type}, \emph{tame} or \emph{wild} if the path algebra $\cl{k}Q$ is of finite representation type, tame or wild, respectively. The representation type of $Q$ does not depend on the field $k$; see below. A quiver is \emph{connected} if its underlying graph is connected. By Gabriel's Theorem \cite{gabriel}, the connected quivers of finite representation type are exactly those whose underlying graph is a Dynkin diagram of type $A$, $D$ or $E$ (see \cite[Theorem 3.3]{kirillov} or \cite[Theorem 8.12]{schiffler}). A connected quiver $Q$ is tame if and only if its underlying graph is an extended Dynkin diagram of type $\widetilde{A}$, $\widetilde{D}$ or $\widetilde{E}$, and it is wild otherwise; see \cite[Theorem 7.47]{kirillov}. 

If $Q$ is connected, the representation type of $Q$ is determined by $C_Q$: $Q$ is of finite representation type if and only if $C_Q$ is positive definite, tame if and only if $C_Q$ is positive semidefinite but not definite, and wild if and only if $C_Q$ is non-degenerate and indefinite; see \cite[\S 8.2]{schiffler} or \cite[Theorem 1.28]{kirillov}. If $Q$ is tame, there is a unique $\delta\in\Z^{Q_0}$ such that $\ang{\delta,\delta}=0$, $\delta_i\geq 1$ for every $i\in Q_0$ and $\min\delta_i=1$, called the \emph{null root} of $Q$. A root $\alpha$ is a \emph{Schur root} if there exist a field extension $K/k$ and a $K$-representation $M$ of $Q$ of dimension vector $\alpha$ such that $\on{End}(M)=K$ (such a representation $M$ is usually called a \emph{brick}). 

The first result related to fields of definitions of quiver representations that we are aware of is the following, due to G. Kac and A. Schofield. 

\begin{prop}\label{deffieldrealroot}
	Let $\alpha$ be a real root for the quiver $Q$. If $K$ is an algebraically closed field, the unique indecomposable representation of dimension vector $\alpha$ is defined over the prime field of $K$.
\end{prop}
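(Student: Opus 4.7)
My plan is to realize the unique indecomposable $M$ of dimension vector $\alpha$ over $K$ as a rational point of an open subscheme $U$ of the representation scheme $R(Q,\alpha)$ which is defined over the prime field $F$ of $K$, and then descend.

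First I would set up the representation scheme $R(Q,\alpha)=\prod_{a\colon i\to j}\on{Mat}_{\alpha_j\times\alpha_i}$ together with the simultaneous-conjugation action of $\on{GL}(\alpha)=\prod_i\on{GL}_{\alpha_i}$, both defined over $F$. Since $\alpha$ is a real root, $\ang{\alpha,\alpha}=1$, and by Kac's theorem $M$ is a brick with $\on{End}_{KQ}(M)=K$; Kac's dimension formula then gives
\[\dim_K\on{Ext}^1_{KQ}(M,M)=\dim_K\on{End}(M)-\ang{\alpha,\alpha}=0.\]
Hence the $\on{GL}(\alpha)(K)$-orbit of $M$ has full dimension in $R(Q,\alpha)$, and by Kac's uniqueness it coincides with the open subset $U\subseteq R(Q,\alpha)$ consisting of representations $N$ with $\on{Ext}^1(N,N)=0$. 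Since for a quiver $\on{Ext}^1(N,N)$ is computed as the cokernel of a single linear map between matrix spaces whose matrix depends polynomially on the coordinates of $R(Q,\alpha)$, the locus $U$ is open by upper-semicontinuity and defined over $F$.

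Next I would produce an $F$-rational point of $U$, which by Kac's uniqueness becomes isomorphic to $M$ after base change to $K$, completing the proof. If $\on{char}K=0$, then $F=\Q$ is infinite, and any non-empty Zariski-open subscheme of the affine space $R(Q,\alpha)_F$ automatically contains an $F$-rational point. If $\on{char}K=p>0$, then $F=\F_p$ is finite; by a standard count, $U(\F_{p^n})$ is non-empty for some $n$, yielding a representation $M_n$ over $\F_{p^n}$. Kac's uniqueness over $\cl{\F_p}$ combined with the Noether-Deuring theorem ensures that every Galois conjugate $M_n^\sigma$ is isomorphic to $M_n$ as an $\F_{p^n}Q$-module, so choosing isomorphisms $\phi_\sigma\colon M_n\to M_n^\sigma$ yields a cocycle with values in $\on{Aut}_{\F_{p^n}Q}(M_n)=\F_{p^n}^\times$, whose class lies in $H^2(\Gal(\F_{p^n}/\F_p),\F_{p^n}^\times)\subseteq\Br(\F_p)=0$. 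The vanishing of this obstruction produces the desired descent to $\F_p$.

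The main obstacle will be the Galois descent step in positive characteristic, since this is exactly where the prime-field hypothesis is used: one must keep track of the fact that, because $M_n$ is a brick, its automorphism group is the multiplicative group of scalars, so that the obstruction cocycle takes values in $\mathbb{G}_m$ and the obstruction class indeed lives in $\Br(\F_p)$ (where it vanishes). This is essentially an instance of the residue-gerbe description given in \Cref{secondgerbe}.
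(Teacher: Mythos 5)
Your proposal breaks down at its very first substantive step, and the gap is not repairable within this strategy. You assert that, because $\alpha$ is a real root, Kac's theorem makes the unique indecomposable $M$ of dimension vector $\alpha$ a brick with $\on{End}_{KQ}(M)=K$, whence $\on{Ext}^1(M,M)=0$ and a dense orbit. Kac's theorem gives existence and uniqueness of the indecomposable, but \emph{not} that it is a brick. Indecomposability only tells you that $\on{End}(M)$ is local with residue field $K$, so the Euler-form identity $\dim\on{End}(M)-\dim\on{Ext}^1(M,M)=\ang{\alpha,\alpha}=1$ yields $\dim\on{Ext}^1(M,M)=\dim j(\on{End}(M))$, which may well be positive: real roots that are not Schur roots exist for wild quivers. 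A concrete instance (note that the proposition is stated for arbitrary $Q$, oriented cycles allowed): let $Q$ have vertices $1,2$, two arrows $a,b\colon 1\to 2$ and one arrow $c\colon 2\to 1$. Then $q_Q(3,1)=9+1-9=1$, so $(3,1)$ is a real root; but a general representation of dimension $(3,1)$ is \emph{decomposable} --- generically the line $W=\ker\phi_a\cap\ker\phi_b\subseteq K^3$ does not meet $\on{im}\phi_c$, so one may choose a complement $L$ of $W$ containing $\on{im}\phi_c$ and split off the summand $(W,0)$. Hence the unique indecomposable of dimension $(3,1)$ does not have a dense orbit, is not rigid, and is not a brick. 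For such $\alpha$ your open set $U=\set{N:\on{Ext}^1(N,N)=0}$ is either empty or consists of decomposable representations, an $F$-point of $U$ would not descend $M$, and the orbit of $M$ itself, being non-dense of positive codimension, has no a priori reason to contain an $F$-point; moreover $\on{Aut}(M)=\on{End}(M)^\times$ is then larger than $\Gm$, so the obstruction no longer visibly lives in a Brauer group.

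This is precisely why the paper does not prove the proposition but cites Kac (positive characteristic, a counting argument over finite fields) and Schofield (characteristic zero, an induction using general subrepresentations); the non-Schur real roots are the whole difficulty. What your argument does correctly establish is the special case where $\alpha$ is a real \emph{Schur} root (equivalently, $M$ is exceptional): there the dense-orbit argument gives the $\Q$-point directly in characteristic $0$, and in characteristic $p$ the descent from $\F_{p^n}$ to $\F_p$ via $H^2(\Gal(\F_{p^n}/\F_p),\F_{p^n}^\times)\subseteq\Br(\F_p)=0$ is sound, in the spirit of \Cref{secondgerbe}. But as a proof of the proposition as stated, it is incomplete.
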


\begin{proof}
	See \cite[Theorem 1]{kac1} for the case of positive characteristic, and \cite[Theorem 8]{deffieldrealroot} in characteristic zero.	
\end{proof}
	
	Let $Q$ be a quiver, and let $\alpha$ be a dimension vector for $Q$. We define the functor\[\on{Rep}_{Q,\alpha}:\on{Fields}_k\to\on{Sets}\] by setting
		\[\on{Rep}_{Q,\alpha}(K):=\set{\text{Isomorphism classes of $\alpha$-dimensional $K$-representations of $Q$}}.\] If $K\c L$ is a field extension, the corresponding map $\on{Rep}_{Q,\alpha}(K)\to\on{Rep}_{Q,\alpha}(L)$ is given by tensor product. 

We denote $r_{kQ}(n)$ simply by $r_Q(n)$. Since representations of a quiver $Q$ are the same as representations of its path algebra, for any $n\geq 0$ we have \[r_Q(n)=\max_{\sum\alpha_i\leq  n}\ed_k\on{Rep}_{Q,\alpha}.\] By \Cref{nondecreasing2} below, one may equivalently to take the maximum over those $\alpha$ which satisfy $\sum\alpha_i=n$.	

	\begin{lemma}\label{nondecreasing2}
	    Let $Q$ be a quiver. If $\alpha,\beta$ are two dimension vectors such that $\beta_i\leq\alpha_i$ for each vertex $i$ of $Q$, then \[\ed_kR_{Q,\beta}\leq \ed_kR_{Q,\alpha}.\]
	\end{lemma}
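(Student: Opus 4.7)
The plan is to show that any $\beta$-dimensional representation can be augmented to an $\alpha$-dimensional one in a canonical way that does not change the essential dimension, by adding summands that are already defined over $k$.

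For each vertex $i\in Q_0$, let $S_i$ denote the simple $kQ$-module of dimension vector $e_i$, i.e., $k$ placed at vertex $i$, zero at all other vertices, and with every arrow acting as $0$. Each $S_i$ is defined over $k$, and in fact is indecomposable. Given a field extension $K/k$ and a $K$-representation $M$ of $Q$ of dimension vector $\beta$, I would form
\[
\widetilde{M} := M \oplus \bigoplus_{i\in Q_0}(S_i)_K^{\oplus(\alpha_i-\beta_i)}.
\]
The hypothesis $\beta_i\le\alpha_i$ ensures that the exponents $\alpha_i-\beta_i$ are non-negative, so $\widetilde M$ is a bona fide representation; by construction its dimension vector is $\alpha$, so $\widetilde M\in\on{Rep}_{Q,\alpha}(K)$.

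The second summand $M'' := \bigoplus_{i\in Q_0}(S_i)_K^{\oplus(\alpha_i-\beta_i)}$ is a direct sum of copies of the $(S_i)_K$, each of which is indecomposable and defined over $k$. Hence \Cref{summanddefined3}, applied to the decomposition $\widetilde{M}=M\oplus M''$, yields $\ed_k\widetilde{M}=\ed_k M$. Therefore
\[
\ed_k M \;=\; \ed_k\widetilde{M} \;\le\; \ed_k\on{Rep}_{Q,\alpha}.
\]
Taking the supremum over all pairs $(K,M)$ with $M\in\on{Rep}_{Q,\beta}(K)$ gives the desired inequality $\ed_k\on{Rep}_{Q,\beta}\le\ed_k\on{Rep}_{Q,\alpha}$.

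There is essentially no obstacle: the only non-trivial input is \Cref{summanddefined3}, which has already been proved, and the only fact needed about quivers is the existence of the simple $kQ$-modules $S_i$ at each vertex, which is immediate from the definition of $kQ$ (or of a $Q$-representation). Thus the argument reduces to a one-line invocation of \Cref{summanddefined3} once the augmentation $\widetilde M$ is written down.
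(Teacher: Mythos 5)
Your proof is correct and is essentially identical to the paper's: the paper also applies \Cref{summanddefined3} with $M''$ taken to be the trivial (all arrows zero) representation of dimension vector $\alpha-\beta$, which is exactly your $\bigoplus_{i}(S_i)_K^{\oplus(\alpha_i-\beta_i)}$.
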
 
	\begin{proof}
	 	This follows from an application of \Cref{summanddefined3} as in the proof of \Cref{nondecreasing1}, this time by letting $M''$ be the trivial representation of $Q$ of dimension vector $\alpha-\beta$.   
	\end{proof}

\begin{rmk}
	We record here another interesting consequence of \Cref{summanddefined3}. We will not use it in the sequel. Recall that if $Q$ is a quiver without oriented cycles, the category of its finite-dimensional representations has enough projectives (see \cite[Theorem 1.19]{kirillov}), and so we may consider its stable category. Since $Q$ has no oriented cycles, the projective representations of $Q$ are finite-dimensional and are defined over the base field $k$; see \cite[Theorem 1.18]{kirillov}. 
	
	Let $M,N$ be representations of $Q$. If $M$ and $N$ are stably equivalent $kQ$-modules, there is an isomorphism $M\oplus P_1\cong N\oplus P_2$, where $P_1$ and $P_2$ are projective representations for $Q$. By \Cref{summanddefined3} we have \[\ed_kM=\ed_k(M\oplus P_1)=\ed_k(N\oplus P_2)=\ed_kN.\] It follows that essential dimension is a \emph{stable invariant}.
	
	Note that an analogous assertion fails in the setting of central simple algebras and is an open problem in the case of quadratic forms; see \cite[\S 7.4]{reichstein2010essential}.
	\end{rmk}

\section{Stacks of quiver representations}
	Analogously to what we have done for algebras in \Cref{essdim}, we may view $K$-representations of a quiver $Q$ as $K$-orbits of a suitable action. Let $X_{Q,\alpha}:=\prod_{a:i\to j}\on{M}_{\alpha_j\times\alpha_i,k}$ and let $G_{Q,\alpha}:=\prod_{i\in Q_0} \on{GL}_{\alpha_i}$ be an affine space and an algebraic group over $k$, respectively. There is an action of $G_{Q,\alpha}$ over $X_{Q,\alpha}$, given by \[(g_i)_{i\in Q_0}\cdot (P_{a})_{a:i\to j}:=(g_jP_{a}g_i^{-1})_{a:i\to j}.\] We denote by $\mc{R}_{Q,\alpha}$ the quotient stack $[X_{Q,\alpha}/G_{Q,\alpha}]$. As for algebras, one can show that there is a bijection between $K$-points of $\mc{R}_{Q,\alpha}$ and isomorphism classes of representations of $Q$ of dimension $\alpha$. The stack $\mc{R}_{Q,\alpha}$ is a smooth stack of finite type over $k$. 

If $S$ is a $k$-scheme, an $S$-representation of $Q$ is given by a locally free $\O_S$-module $E_i$ for each vertex $i$ of $Q$, and by an $\O_S$-linear homomorphism $\phi_a:E_i\to E_j$ for each arrow $a:i\to j$. For any natural number $n$, let $\mc{N}il^n_Q$ denote the stack over $\on{Sch}_k$ parametrizing representations $M$ of $Q$ over $S$, together with a morphism $\psi:M\to M$ such that $\psi^n=0$, and such that $\coker\psi^j$ is an $S$-representation for every $j\geq 0$ (i.e. for each vertex the corresponding coherent sheaf is locally free). We note that $\mc{N}il^0_Q=\Spec k$ and $\mc{N}il^1_Q$ is the disjoint union of the $\mc{R}_{Q,\alpha}$, where $\alpha$ ranges among over all possible dimension vectors. 

We are going to follow \cite[\S 6]{biswasdhillonhoffmann} for various statements and proofs in this section. In \cite[\S 6]{biswasdhillonhoffmann}, the authors studied stacks of coherent sheaves on a fixed curve $C$, and analyzed them in terms of certain stacks $\mc{N}il_{C,n}$. Our definition of $\mc{N}il^n_Q$ is the analogue of their $\mc{N}il_{C,n}$ in the context of quiver representations. As we state below, the results of \cite[\S 6]{biswasdhillonhoffmann} still hold in this setting. The common feature of the two set-ups that makes the arguments of \cite[\S 6]{biswasdhillonhoffmann} possible is the vanishing of the $\on{Ext}^i$ for $i\geq 2$ (for quiver representations, this follows from \cite[Theorem 2.24]{schiffler}).

\begin{prop}\label{nilpdim} The stack $\mc{N}il^n_Q$ is smooth over $k$, and its dimension at a point $(M,\psi)$ is given by the formula
	\[\on{dim}_{(M,\psi)}\mc{N}il^n_Q=-\sum_{h=1}^n\ang{\beta_h,\beta_h}\]
	where $\beta_h$ is the dimension vector of $\on{im}\psi^{h-1}/\on{im}\psi^h$.
\end{prop}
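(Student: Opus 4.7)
The plan is to proceed by induction on $n$, mirroring the strategy of \cite[Proposition 6.2]{biswasdhillonhoffmann}. The base case $n=0$ is trivial: $\mc{N}il^0_Q=\Spec k$ has dimension $0$, matching the empty sum. For $n=1$, we have $\mc{N}il^1_Q=\coprod_\alpha \mc{R}_{Q,\alpha}=\coprod_\alpha [X_{Q,\alpha}/G_{Q,\alpha}]$, which is smooth (as a quotient of a smooth affine scheme by a smooth algebraic group), with dimension in the component indexed by $\alpha$ equal to
\[\dim X_{Q,\alpha}-\dim G_{Q,\alpha}=\sum_{a:i\to j}\alpha_i\alpha_j-\sum_{i\in Q_0}\alpha_i^2=-\ang{\alpha,\alpha},\]
as required.

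For the inductive step, introduce the \emph{image morphism} $\pi_n\colon\mc{N}il^n_Q\to\mc{N}il^{n-1}_Q$ defined on $S$-points by $(M,\psi)\mapsto(\on{im}\psi,\psi|_{\on{im}\psi})$. Well-definedness requires checking that $\on{im}\psi$ and the iterated quotients $\on{im}\psi/\on{im}\psi^{j+1}$ are locally free: each sits in a short exact sequence whose other terms are locally free by hypothesis, so flatness (hence local freeness) follows. The condition $(\psi|_{\on{im}\psi})^{n-1}=0$ is automatic. Under $\pi_n$, the invariants shift down by one: if $(M,\psi)$ has invariants $(\beta_1,\ldots,\beta_n)$, then $(\on{im}\psi,\psi|_{\on{im}\psi})$ has invariants $(\beta_2,\ldots,\beta_n)$. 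By the inductive hypothesis, $\dim_{(\on{im}\psi,\psi|)}\mc{N}il^{n-1}_Q=-\sum_{h=2}^n\ang{\beta_h,\beta_h}$, so it suffices to prove that $\pi_n$ is smooth at $(M,\psi)$ of relative dimension $-\ang{\beta_1,\beta_1}$, where $\beta_1=\dim M/\on{im}\psi$.

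The fiber of $\pi_n$ over $(N,\phi)$ parametrizes extensions $0\to N\to M\to\bar M\to 0$ of a representation $\bar M$ of dimension vector $\beta_1$, together with a homomorphism $\tilde\psi\colon M\to N$ satisfying $\tilde\psi|_N=\phi$ and $\on{im}\tilde\psi=N$. Projection $(M,\tilde\psi)\mapsto\bar M$ exhibits this fiber over $\mc{R}_{Q,\beta_1}$ with the relative data controlled by a two-term complex $\on{Hom}_Q(\bar M,N)\to\on{Ext}^1_Q(\bar M,N)$ that encodes the extension class and the obstruction to lifting $\phi$. The vanishing $\on{Ext}^i_Q=0$ for $i\geq 2$, available since the path algebra has global dimension at most one (\cite[Theorem 2.24]{schiffler}), ensures that all second-order obstructions vanish, yielding smoothness of the fiber and hence of $\pi_n$. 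Computing the relative dimension, the moduli of $\bar M$ contributes $-\ang{\beta_1,\beta_1}$, the extension class and the torsor of lifts of $\phi$ contribute $\dim\on{Ext}^1_Q(\bar M,N)+\dim\on{Hom}_Q(\bar M,N)$, and the stabilizer of $(M,\iota,\tilde\psi)$ subtracts the mixed terms. The Euler identity $\dim\on{Hom}_Q(U,V)-\dim\on{Ext}^1_Q(U,V)=\ang{\dim U,\dim V}$ makes the cross-terms involving $\beta_1$ and $\dim N$ cancel, leaving only $-\ang{\beta_1,\beta_1}$.

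The main obstacle is this last dimension bookkeeping: one must verify carefully that the contributions from the extension class, the torsor of lifts of $\phi$, and the stabilizer of $(M,\iota,\tilde\psi)$ combine so that all terms involving $\on{Hom}_Q(\bar M,N)$ and $\on{Ext}^1_Q(\bar M,N)$ cancel via the Euler identity. This parallels the analogous computation in \cite[\S 6]{biswasdhillonhoffmann}, where the vanishing of $H^2(C,-)$ for a smooth curve $C$ plays the role played here by the vanishing of $\on{Ext}^2_Q$.
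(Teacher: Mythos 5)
Your proposal is correct and follows essentially the same route as the paper: the paper's own proof is simply a citation to the inductive argument of \cite[Corollary 6.2]{biswasdhillonhoffmann} (with the remark that the one input from \cite[Lemma 3.8]{biswas2008some} must be replaced by its quiver analogue via the infinitesimal criterion, which is your smoothness claim for $\pi_n$), together with the Euler-form identity $\ang{\alpha,\alpha}=\dim\on{Hom}-\dim\on{Ext}^1$ and the vanishing of $\on{Ext}^2$ for path algebras. Your reconstruction of that induction via the image morphism is faithful; the only caveat is that the final dimension bookkeeping in the fiber of $\pi_n$ (in particular the coupling between the extension class and the obstruction to lifting $\phi$) is left schematic, but you correctly identify it as the step to be imported verbatim from \cite[\S 6]{biswasdhillonhoffmann}.
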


\begin{proof}
	The proof of smoothness of $\mc{N}il^n_Q$ proceeds as in \cite[Corollary 6.2]{biswasdhillonhoffmann}, with one modification. One step of the proof of \cite[Corollary 6.2]{biswasdhillonhoffmann} rests on \cite[Lemma 3.8]{biswas2008some}. The analogous result for quiver representations is still true, and is a direct application of the infinitesimal criterion for smoothness. The computation of the dimension of $\mc{N}il^n_Q$ also closely follows the argument of \cite[Corollary 6.2]{biswasdhillonhoffmann}, using the fact that if $N$ is a $K$-representation of $Q$ of dimension vector $\alpha$, then
	\[\ang{\alpha,\alpha}=\on{dim}_K(\on{End}(N))-\on{dim}_K(\on{Ext}^1(N,N));\] see \cite[Proposition 8.4]{schiffler}.
\end{proof}

In the sequel, we will only use the following corollary of \Cref{nilpdim}.

\begin{cor}\label{trdegindec}
	Let $M$ be an indecomposable representation of $Q$ over an algebraically closed field $K$ containing $k$, and let $\alpha$ be its dimension vector. If $\beta_j$ denotes the dimension vector of $\on{im}(\psi^{j-1})/\on{im}(\psi^j)$ for a general element $\psi$ of the Jacobson radical $j(\on{End}(M))$, then
	\[\trdeg_kk(M)\leq 1-\sum_j\ang{\beta_j,\beta_j}.\]
\end{cor}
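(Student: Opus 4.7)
The plan is to use the forgetful morphism $\pi \colon \mc{N}il^n_Q \to \mc{R}_{Q,\alpha}$ sending $(N, \psi) \mapsto N$, with $n$ large enough that $\psi^n = 0$. View $(M, \psi)$ as a $K$-point of $\mc{N}il^n_Q$; by \Cref{nilpdim}, its local dimension there is $\dim_{(M,\psi)} \mc{N}il^n_Q = -\sum_h \ang{\beta_h, \beta_h}$. By the standard inequality for points on an algebraic stack (obtained by writing $\trdeg_k k(y) = \dim \cl{\{y\}}_{\mc{Y}} + \dim \on{Aut}(y)$ on an atlas and using $\dim \cl{\{y\}}_{\mc{Y}} \leq \dim_y \mc{Y}$), we deduce
\[\trdeg_k k(M, \psi) \leq -\sum_h \ang{\beta_h, \beta_h} + \dim \on{Aut}(M, \psi),\]
where $\on{Aut}(M, \psi) = \set{g \in \on{Aut}(M) : g\psi = \psi g}$ denotes the centralizer of $\psi$ in $\on{Aut}(M)$.

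The core of the argument is to compute the transcendence degree of $k(M, \psi)$ over $k(M)$. On an atlas, the fiber of $\pi$ over the residue gerbe of $M$ is the conjugation-quotient stack $[j(\on{End}(M))/\on{Aut}(M)]$. Since $\on{Aut}(M)$ is the open subset of units of $\on{End}(M)$, the variety $j(\on{End}(M))$ has dimension $\dim \on{Aut}(M) - 1$, and hence this quotient stack has dimension $-1$. Its generic point, corresponding to a sufficiently generic $\psi$, then has residue field of transcendence degree $-1 + \dim \on{Aut}(M, \psi) = \dim \on{Aut}(M, \psi) - 1$ over $k(M)$, by the standard formula relating the residue field at a generic point to the dimension of the stack plus the dimension of the automorphism group.

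Combining the two previous estimates,
\[\trdeg_k k(M) = \trdeg_k k(M, \psi) - \trdeg_{k(M)} k(M, \psi) \leq -\sum_h \ang{\beta_h, \beta_h} + \dim \on{Aut}(M, \psi) - \bigl(\dim \on{Aut}(M, \psi) - 1\bigr) = 1 - \sum_h \ang{\beta_h, \beta_h},\]
which is the desired bound. The principal technical obstacle will be making the equality $\trdeg_{k(M)} k(M, \psi) = \dim \on{Aut}(M, \psi) - 1$ rigorous: this requires showing that the conjugation action of $\on{Aut}(M)$ on $j(\on{End}(M))$ admits a geometric quotient of the expected dimension on a dense open subset. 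This can be extracted from Rosenlicht's theorem (cited in the proof of \Cref{rosen}) combined with the fiber-dimension theorem applied to the orbit map of a sufficiently generic $\psi$.
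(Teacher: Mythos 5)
Your proposal is correct and takes essentially the same route as the paper, whose proof is just a citation of how \cite[Corollary 6.3]{biswasdhillonhoffmann} is deduced from \cite[Corollary 6.2]{biswasdhillonhoffmann} --- namely the fiber-dimension analysis of $\mc{N}il^n_Q\to\mc{R}_{Q,\alpha}$ that you carry out, combining the dimension formula of \Cref{nilpdim} with the fact that the fiber over $M$ is $[j(\on{End}(M))/\on{Aut}(M)]$ of dimension $-1$. The one step worth making explicit is that $\dim j(\on{End}(M))=\dim\on{Aut}(M)-1$ because $\on{End}(M)/j(\on{End}(M))\cong K$, which uses both that $M$ is indecomposable and that $K$ is algebraically closed.
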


\begin{proof}
	The result follows from \Cref{nilpdim}, in the same way that \cite[Corollary 6.3]{biswasdhillonhoffmann} follows from \cite[Corollary 6.2]{biswasdhillonhoffmann}.
\end{proof}

\section{Quivers of finite and tame representation type}\label{nonwild}
The remaining part of this article is concerned with the essential dimension of quiver representations. We begin by considering quivers of finite and tame representation type. 

\begin{prop}\label{finrep}
    Let $k$ be an arbitrary field, and let $Q$ be a quiver.
    \begin{enumerate}[label=(\alph*)]
        \item\label{finrep1} If $\alpha$ is a real root for $Q$, then $\ed_k\on{Rep}_{Q,\alpha}=0$.
        \item\label{finrep2} If $Q$ is of finite representation type, then $r_Q(n)=0$ for every $n\geq 1$.
    \end{enumerate}
\end{prop}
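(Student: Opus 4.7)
My plan is to deduce both parts from Proposition \ref{deffieldrealroot} (the Kac--Schofield theorem: the unique indecomposable representation with a real root dimension vector over an algebraically closed field is defined over the prime field), combined with a Noether--Deuring descent argument.

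For part (a), let $M$ be a representation of dimension vector $\alpha$ over a field extension $K/k$, and pass to an algebraic closure $\cl{K}$. I would apply the Krull--Schmidt theorem to obtain a decomposition $M_{\cl{K}} = \bigoplus_i N_i$ into indecomposables, whose dimension vectors $\beta_i$ are positive roots summing to $\alpha$. By Proposition \ref{deffieldrealroot}, each indecomposable summand whose dimension vector is a real root is defined over the prime field of $\cl{K}$, and hence over $k$; consequently $M_{\cl{K}}$ is defined over $k$. The Noether--Deuring portion of \Cref{secondgerbe} then forces $M$ itself to be defined over $k$, yielding $\ed_k M = 0$ and therefore $\ed_k \on{Rep}_{Q,\alpha} = 0$.

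For part (b), if $Q$ is of finite representation type then its underlying graph is a Dynkin diagram, so the Tits form $q_Q$ is positive definite; in particular $Q$ admits no imaginary roots, and every positive root is real. By Gabriel's theorem every indecomposable representation of $Q$ over $\cl{K}$ has dimension vector a positive, hence real, root. So for any representation $M$ over $K$ of dimension at most $n$, every summand of the Krull--Schmidt decomposition of $M_{\cl{K}}$ has a real root dimension vector, and the argument of part (a) applies to each summand: $M_{\cl{K}}$ is then defined over $k$, and by Noether--Deuring so is $M$. Taking the supremum over all such $M$ gives $r_Q(n) = 0$.

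The step I expect to be the main obstacle is the descent itself: the fact that ``$M_{\cl{K}}$ defined over $k$'' implies ``$M$ defined over $k$'' requires the Noether--Deuring theorem, as encoded in \Cref{secondgerbe}. Everything else is essentially bookkeeping about roots of the Tits form (trivial for Dynkin $Q$) and a direct appeal to Kac--Schofield via Proposition \ref{deffieldrealroot}.
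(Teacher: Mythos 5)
Your strategy -- Kac--Schofield (\Cref{deffieldrealroot}) applied to the indecomposable summands of $M_{\cl{K}}$, followed by Noether--Deuring descent back to $K$ -- is exactly the paper's, and your part (b) is complete: for a Dynkin quiver the Tits form is positive definite, so by Kac's theorem every indecomposable summand of $M_{\cl{K}}$ has a real positive root as dimension vector, each such summand is defined over the prime field, and Noether--Deuring (in its form for arbitrary field extensions) brings the resulting $k$-form down to $K$. The descent step you flagged as the main obstacle is not where the difficulty lies.

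The difficulty lies in part (a), at the word ``consequently.'' You apply \Cref{deffieldrealroot} only to those summands $N_i$ whose dimension vectors $\beta_i$ are real roots, but then conclude that \emph{all} of $M_{\cl{K}}$ is defined over $k$ -- and nothing guarantees that every $\beta_i$ is real, even when $\alpha$ itself is a real root. Concretely, for the Kronecker quiver $K_2$ the vector $\alpha=(2,3)$ is a real root, yet it decomposes as $(0,1)+\delta+\delta$ with $\delta=(1,1)$ the isotropic imaginary root; taking the direct sum of the simple at the second vertex with two indecomposables of dimension vector $\delta$ whose parameters are algebraically independent over $k$ gives a $(2,3)$-dimensional representation of positive essential dimension (and indeed \Cref{tameed} gives $\ed_k\on{Rep}_{K_2,(2,3)}=2$). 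So no argument can close this gap for the statement as literally written: part (a) is only correct, and your proof is only complete, for \emph{indecomposable} representations $M$ (equivalently, for those $M$ all of whose geometric indecomposable summands have real-root dimension vectors). This is in fact the only way the proposition is used elsewhere -- in the proof of \Cref{tameed} it is applied to indecomposable summands, and in part (b) Gabriel's theorem guarantees all the $\beta_i$ are real -- and the paper's own proof silently makes the same unjustified assertion that ``every indecomposable summand of $M_{\cl{K}}$ is defined over the prime field.'' You should restrict part (a) to indecomposable $M$; with that restriction your argument is correct and matches the paper's.
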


\begin{proof}
  (a) If $K/k$ is a field extension, and $M$ is an $\alpha$-dimensional $K$-representation of $Q$, every indecomposable summand of $M_{\cl{K}}$ is defined over the prime field of $k$; see \cite[Theorem 1]{kac1} for the case of positive characteristic, and \cite[Theorem 8]{deffieldrealroot} in characteristic zero. By Noether-Deuring's Theorem, $M$ is defined over the prime field of $k$, hence $\ed_kM=0$. This implies $\ed_k\on{Rep}_{Q,\alpha}=0$ for every dimension vector $\alpha$, and $r_Q(n)=0$ for every $n\geq 1$. 
    
  (b) By Gabriel's Theorem (as stated in \cite[Theorem 8.12]{schiffler}), the dimension vector of an indecomposable representation of $Q$ is a positive real root. Now (b) follows from (a).
\end{proof}

If $Q$ is tame, the computation of $r_Q(n)$ will follow from \Cref{nondecreasing2} and the following lemma, proved in the Appendix.

\begin{lemma}\label{uppertame}
	Let $K$ be a field extension of $k$, and $M$ an indecomposable representation of a tame quiver $Q$ of dimension vector $m\delta$ over $K$. Then there exist $a_1,\dots,a_m\in K$, and bases of the vector spaces $M_i$, for $i\in Q_0$, so that the linear maps $\phi_a$, $a\in Q_1$, are represented by matrices having entries in $\set{0,1,a_1,\dots,a_m}$.
\end{lemma}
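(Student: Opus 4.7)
The plan is to produce an explicit canonical form for an indecomposable $K$-representation $M$ of $Q$ of dimension $m\delta$, via the classification of regular indecomposables by the tubes of the Auslander--Reiten quiver, in the spirit of Ringel \cite{ringel1974representation}. First I would reduce to the case that $Q$ is connected, since an indecomposable representation is supported on a single connected component; thus $Q$ is an extended Dynkin quiver of type $\widetilde{A}$, $\widetilde{D}$, or $\widetilde{E}$.

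The model case is the Kronecker quiver, where $\delta=(1,1)$. Here the regular indecomposable $K$-representations of dimension $(m,m)$ are in bijection with pairs $(p(t),n)$ where $p(t)\in K[t]$ is monic irreducible and $n\cdot\deg(p)=m$, together with a single ``point-at-infinity'' indecomposable. A representative of the indecomposable labelled by $(p,n)$ is the pair of matrices $(I_m,C_{p^n})$, where $C_{p^n}$ is the companion matrix of $p(t)^n$; its entries are $0$, $1$, and the $m$ coefficients $a_1,\dots,a_m\in K$ of the monic polynomial $p(t)^n$ of degree $m$. The point-at-infinity indecomposable is represented by a nilpotent Jordan block paired with the identity, both with entries in $\set{0,1}$.

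For a general tame quiver $Q$, my strategy is to extend this model along each homogeneous (rank $1$) tube of the regular component, by placing a single companion block $C_{p^n}$ on a distinguished arrow of $Q$ and identity matrices (or $\set{0,1}$-valued structural matrices dictated by the combinatorics of the underlying Dynkin diagram) on the remaining arrows. Indecomposables of dimension $m\delta$ lying in an exceptional tube of rank $p>1$ (which necessarily have quasi-length $mp$, since the mouth dimension vectors $\delta_1,\dots,\delta_p$ of such a tube satisfy $\delta_1+\cdots+\delta_p=\delta$) can be handled analogously by cycling a companion block through the mouth of the tube; the parameter count remains at most $m$ because the resulting indecomposable is again determined by the coefficients of a single polynomial of degree $m$.

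The main obstacle I anticipate is the explicit case analysis: a uniform treatment of all orientations of $\widetilde{A}_n$ together with the specific constructions needed for $\widetilde{D}_n$, $\widetilde{E}_6$, $\widetilde{E}_7$, and $\widetilde{E}_8$, with separate checks in each exceptional tube (of which there are only finitely many per quiver). Once the normal form is established on each tube, setting $a_1,\dots,a_m$ to be the coefficients of the relevant polynomial $p(t)^n$ and choosing the bases implicit in the normal form yields the conclusion of the lemma by direct inspection of the matrices $\phi_a$.
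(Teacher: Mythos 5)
Your framework is sound and overlaps substantially with the paper's strategy: an indecomposable of dimension vector $m\delta$ is automatically regular, the continuous parameters live only in the homogeneous tubes, and the goal in every case is to isolate them into a single companion (rational canonical form) block. Your treatment of the Kronecker quiver is correct, and reducing a general orientation of $\widetilde{A}_n$ to $\widetilde{A}_0$ or $\widetilde{A}_1$ by contracting arrows represented by isomorphisms is also how the paper proceeds. The problem is that everything beyond this model case is asserted rather than proved. The claim that for types $\widetilde{D}$ and $\widetilde{E}$ one can choose bases so that a single distinguished arrow carries $C_{p^n}$ while all remaining arrows carry $\set{0,1}$-matrices is essentially the content of the lemma, and it does not follow from the tube structure alone: the tube decomposition is a statement about the regular \emph{category}, not about matrix normal forms, and extracting such a normal form over a field that is not algebraically closed is the nontrivial step. (Over $\cl{K}$ one can quote Donovan--Freislich \cite{donovanfreislich}; over $K$ one needs either the full Dlab--Ringel classification \cite{dlab1976indecomposable,ringel1976representations}, or, as the paper does, a descent device --- \Cref{sep} together with the Galois action on indecomposable summands and Noether--Deuring --- combined with the elementary solution of the four subspace problem over an arbitrary field \cite{medina2004four} for $\widetilde{D}_4$, and the Donovan--Freislich functors, which require knowing that the regular subcategory is abelian over an arbitrary field, to reduce $\widetilde{E}_6$, $\widetilde{E}_7$, $\widetilde{E}_8$ to $\widetilde{D}_4$. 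Note also that the $\widetilde{D}_4$ normal form is not literally ``identities plus one companion block''; it consists of four $2m\times m$ matrices, one of which contains a rational-canonical-form block.) Since you explicitly defer this case analysis as ``the main obstacle,'' the proposal has a gap exactly where the lemma's substance lies.

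A smaller imprecision: indecomposables of dimension vector $m\delta$ lying in an exceptional tube are discrete --- their quasi-socles run over the finitely many mouth modules, whose dimension vectors are real roots, and no continuous parameter occurs --- so describing them as ``determined by the coefficients of a single polynomial of degree $m$'' conflates them with the homogeneous case. This does not threaten the conclusion (fewer parameters are needed, not more), but it indicates that the exceptional tubes have not actually been examined.
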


In \cite{donovanfreislich}, the indecomposable representations of a tame quiver $Q$ are classified, over an algebraically closed field $K$. Another reference on this topic is \cite[Chapter XIII]{simson2007elements}. Each indecomposable representation may be described by matrices having entries in $\set{0,1,\lambda}$, for some $\lambda\in K$. In the Appendix we show that this classification may be naturally extended to arbitrary fields, with the help of some results successive to \cite{donovanfreislich}, namely \cite{dlab1976indecomposable} and \cite{medina2004four}. This is analogous to the passage from the Jordan canonical form to the rational canonical form;  see \Cref{basicexample}.

\begin{prop}\label{tameed}
	Let $Q$ be a tame connected quiver, with null root $\delta$. If $\alpha$ is a dimension vector, and $m$ is the biggest non-negative integer such that $m\delta_i\leq\alpha_i$ for each vertex $i$ of $Q$, then $\on{ed}_k\on{Rep}_{Q,\alpha}=m$.
	Furthermore, \[r_{Q}(n)=\floor{\frac{n}{\sum\delta_i}}.\] 
	\end{prop}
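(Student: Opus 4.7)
The plan is to establish matching upper and lower bounds for $\ed_k\on{Rep}_{Q,\alpha}$, then optimize over dimension vectors $\alpha$ with $\sum\alpha_i\leq n$.

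For the upper bound $\ed_k\on{Rep}_{Q,\alpha}\leq m$, I would pick any $K/k$ and any $K$-representation $M$ of dimension vector $\alpha$, and decompose $M=\bigoplus_j M_j$ into indecomposables of dimension vectors $\alpha^{(j)}$. Since $Q$ is tame, Kac's Theorem tells us that each $\alpha^{(j)}$ is either a positive real root of $Q$ or a positive integer multiple $m_j\delta$ of the null root. By \Cref{finrep}(a) (\emph{via} Kac--Schofield), the indecomposable summands of real-root dimension descend to the prime field, so they contribute $0$ to the essential dimension. For an indecomposable summand of dimension $m_j\delta$, \Cref{uppertame} produces a basis in which the structure matrices have entries in $\{0,1,a_1,\dots,a_{m_j}\}$; hence this summand is defined over a subfield of transcendence degree $\leq m_j$. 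Taking compositums, $M$ is defined over a subfield of transcendence degree $\leq \sum_j m_j$. Finally, comparing dimension vectors componentwise gives $\bigl(\sum_j m_j\bigr)\delta\leq\alpha$, so $\sum_j m_j\leq m$ by maximality of $m$.

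For the lower bound $\ed_k\on{Rep}_{Q,\alpha}\geq m$, by \Cref{nondecreasing2} it suffices to show $\ed_k\on{Rep}_{Q,m\delta}\geq m$, since $m\delta_i\leq \alpha_i$ for all $i$. I would mimic the construction in the proof of \Cref{trichotomy2}: tameness of $Q$ provides a one-parameter family of pairwise non-isomorphic indecomposable $\delta$-dimensional representations, i.e.\ a rational map $\mathbb{A}^1\dashrightarrow X_{Q,\delta}$ landing in the indecomposable locus. Taking the $m$-fold external direct sum gives an $S_m$-equivariant rational map $\mathbb{A}^m\dashrightarrow X_{Q,m\delta}$, where $S_m$ acts by permuting coordinates on the source and by the diagonal embedding $S_m\hookrightarrow G_{Q,m\delta}$ on the target (permuting block-diagonal positions of the $m$ copies of $\delta$). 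The Krull--Schmidt Theorem ensures that only finitely many $S_m$-orbits of generic $m$-tuples produce isomorphic direct sums, so only finitely many $S_m$-orbits lie over a given $G_{Q,m\delta}$-orbit. Applying \Cref{rosen} with $Y=\mathbb{A}^m$, $X=X_{Q,m\delta}$, $H=S_m$ and $G=G_{Q,m\delta}$ yields $\ed_k[X_{Q,m\delta}/G_{Q,m\delta}]\geq \trdeg_k k(\mathbb{A}^m)^{S_m}=m$.

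To deduce the formula for $r_Q(n)$, note that for any dimension vector $\alpha$ with $\sum_i\alpha_i\leq n$, the maximal $m$ with $m\delta\leq\alpha$ satisfies $m\sum_i\delta_i\leq\sum_i\alpha_i\leq n$, so $m\leq\lfloor n/\sum_i\delta_i\rfloor$. Taking $\alpha=m_0\delta$ for $m_0=\lfloor n/\sum_i\delta_i\rfloor$ realizes this bound by the first part. Hence $r_Q(n)=\lfloor n/\sum_i\delta_i\rfloor$.

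The main obstacle is the upper bound, and specifically the use of \Cref{uppertame}: one must know that even over a non-algebraically-closed field, every indecomposable of imaginary-root dimension $m_j\delta$ admits a normal form involving only $m_j$ transcendental parameters. This is the arithmetic analogue of passing from Jordan to rational canonical form for tame quivers, and is postponed to the Appendix; everything else in the proof is then a clean combination of \Cref{rosen}, \Cref{nondecreasing2}, \Cref{finrep}\ref{finrep1}, and Krull--Schmidt.
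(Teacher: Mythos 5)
Your proposal is correct and follows essentially the same route as the paper: decompose into indecomposables and combine \Cref{finrep}(a) with \Cref{uppertame} for the upper bound, and use the $S_m$-equivariant rational map $\A^m\dashrightarrow X_{Q,m\delta}$ together with Krull--Schmidt, \Cref{rosen} and \Cref{nondecreasing2} for the lower bound. The only cosmetic difference is that the paper routes the map through a locally closed locus $Z_m\subseteq X_{Q,m\delta}$ of split representations, whereas you target $X_{Q,m\delta}$ directly; both are fine.
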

In particular, $a_1(kQ)={1}/{\sum\delta_i}$.
\begin{proof}
	Let $K$ be a field containing $k$, $\alpha$ a dimension vector, and $M$ an $\alpha$-dimensional $K$-representation.  Then $M$ decomposes as a direct sum of indecomposable representations $M_h$, and \[\ed_kM\leq \sum\ed_kM_h.\] Let $m$ be the maximum non-negative integer such that $m\delta_i\leq \alpha_i$ for each vertex $i$ of $Q$.
	If the dimension vector of $M_h$ is a real root, then $\ed_kM_h=0$ by \Cref{finrep}(a). If it is $m_h\delta$, then by \Cref{uppertame} we have $\ed_k M_h\leq m_h$. Since $\sum m_h\leq m$, we conclude that $\ed_kM\leq m$. Therefore $\ed_k\on{Rep}_{Q,\alpha}\leq m$.
	
	Let us now prove that $\ed_k\on{Rep}_{Q,\alpha}\geq m$. Since $\ed_k\on{Rep}_{Q,\alpha}\geq \ed_{\cl{k}}\on{Rep}_{Q,\alpha}$, we may assume that $k$ is algebraically closed. Let $Z_m\c X_{Q,m\delta}$ be the locally closed subset parametrizing representations $\oplus_{h=1}^m M_h$, where each $M_h$ has dimension vector $\delta$. There is an obvious action of $S_m$ on $Z_m$, given by permutation of the summands. Consider $m$ copies of an infinite family of indecomposable representations of dimension vector $\delta$ parametrized by an open subset of $\A^1$. As in the proof of \Cref{trichotomy2}, this gives an $S_m$-equivariant rational map \[\A^m\dashrightarrow Z_m\] sending at most finitely many $S_m$-orbits of $\A^m$ to the same $G_{\alpha}$-orbit of $Z_m$. By \Cref{rosen}, we deduce that $\ed_k\on{Rep}_{Q,m\delta}\geq m$. By \Cref{nondecreasing2}, we have that $\ed_k\on{Rep}_{Q,\alpha}\geq m$, hence we conclude that $\ed_k\on{Rep}_{Q,\alpha}=m$ as desired.
	
	We now prove the formula for $r_Q(n)$. Set \[d:=\floor{\frac{n}{\sum\delta_i}}.\] Fix a non-negative integer $n$, and let $\alpha$ be a dimension vector such that $\sum\alpha_i\leq n$. Let $m$ the maximum non-negative integer for which the inequality $m\delta_i\leq \alpha_i$ holds for each vertex $i$ of $Q$. By what we have proved so far, $\ed_k\on{Rep}_{Q,\alpha}=m$. By summing all the inequalities $m\delta_i\leq \alpha_i$ we obtain $m\sum\delta_i\leq \sum\alpha_i\leq n$, so that $m\leq d$. This implies 
	\[r_{Q}(n)=\max_{\sum\alpha_i\leq n}\ed_k\on{Rep}_{Q,\alpha}\leq d.\]
	On the other hand, we may choose $\alpha$ such that $d\delta_i\leq\alpha_i$ for each vertex $i$. In this case $\ed_k\on{Rep}_{Q,\alpha}=d$, and the proof of the formula for $r_Q(n)$ is complete.	
\end{proof}

\section{Wild quivers}\label{wilda2}

In this section we determine $a_2(kQ)$ for every wild quiver $Q$. Let $M$ be a $K$-representation of $Q$, for some field $K$ containing $k$. Recall that by \Cref{edkm} the term $\ed_{k(M)}M$ grows sublinearly with the dimension of $M$, so the quadratic contribution to $r_Q(n)$ will come from  $\trdeg_kk(M)$. Our first objective is to produce lower and upper bounds for the term $\trdeg_kk(M)$.
\begin{lemma}\label{schurlower}
	Let $\alpha$ be a Schur root for $Q$. Then there exists an $\alpha$-dimensional representation $M$ of $Q$ such that $\trdeg_kk(M)\geq 1 -\ang{\alpha,\alpha}$.
\end{lemma}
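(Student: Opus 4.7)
The plan is to exhibit the ``generic Schur representation'' of dimension vector $\alpha$ and identify its residue field via the quotient stack presentation $\mc{R}_{Q,\alpha} = [X_{Q,\alpha}/G_{Q,\alpha}]$ from the previous section. The basic dimension count is
\[
\dim X_{Q,\alpha} - \dim G_{Q,\alpha} = \sum_{a\colon i\to j} \alpha_i\alpha_j - \sum_{i\in Q_0}\alpha_i^2 = -\ang{\alpha,\alpha},
\]
so I will combine this with the fact that Schur representations have $\mathbb{G}_m$-stabilizers to produce a coarse moduli of dimension $1-\ang{\alpha,\alpha}$.

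First, since $\alpha$ is a Schur root, the locus $U\c X_{Q,\alpha}$ of Schur representations (those whose endomorphism algebra over the base field equals the base field) is a non-empty open subscheme; because $X_{Q,\alpha}$ is an affine space, $U$ is irreducible of the same dimension. The central $\Gm\hookrightarrow G_{Q,\alpha}$ of scalar tuples acts trivially on $X_{Q,\alpha}$, so the action factors through $PG_{Q,\alpha}:=G_{Q,\alpha}/\Gm$; and by the very definition of ``Schur,'' the $PG_{Q,\alpha}$-action on $U$ has trivial stabilizers everywhere.

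Next, by Rosenlicht's Theorem, after replacing $U$ by a $PG_{Q,\alpha}$-invariant dense open subset $V$, the geometric quotient $\pi\colon V\to Y:=V/PG_{Q,\alpha}$ exists, and
\[
\dim Y = \dim V - (\dim G_{Q,\alpha}-1) = 1-\ang{\alpha,\alpha}.
\]
Let $K:=k(V)$, and let $M$ be the $\alpha$-dimensional $K$-representation of $Q$ corresponding to the generic point of $V$. By construction $M$ is Schur, so the group scheme of automorphisms of $M$ over $K$ is $\Gm$.

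Finally, I will identify the residue field $k(M)$ of $M$ as a point of $\mc{R}_{Q,\alpha}$ with $F:=k(Y)$. The image of $M$ in the coarse moduli of $[V/G_{Q,\alpha}]$ is the generic point of $Y$, and the residue gerbe $\mc{G}$ of $M$ is a $\Gm$-gerbe over $\Spec F$ (because the automorphism group is $\Gm$ and the coarse point has residue field $F$); such a gerbe has residue field equal to $F$. Hence $\trdeg_k k(M)=\trdeg_k F = 1-\ang{\alpha,\alpha}$, proving the lemma. The one delicate point is the last step, the identification $k(M)=k(Y)$: it relies on the description of residue gerbes from \cite[Chapitre 11]{laumonmoretbailly} together with the fact that $\Gm$-gerbes do not change the residue field; everything else is a straightforward dimension count enabled by the Schur hypothesis.
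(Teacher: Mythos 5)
Your argument is correct, and its skeleton is the same as the paper's: produce the ``generic'' $\alpha$-dimensional representation and show that its residue field is the function field of a quotient variety of dimension $1-\ang{\alpha,\alpha}$. The difference is in how the quotient is realized and how the residue field is identified. The paper invokes King's GIT construction: for a Schur root $\alpha$ there is a non-empty irreducible coarse moduli space $M^{\on{st}}_{Q,\alpha}$ of stable representations of dimension $1-\ang{\alpha,\alpha}$, and a dominant rational map $\mc{R}_{Q,\alpha}\dashrightarrow M^{\on{st}}_{Q,\alpha}$; any field of definition of a representation lying over the generic point $\eta$ must contain $k(\eta)$, which gives the inequality directly. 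You instead avoid stability altogether, applying Rosenlicht's theorem to the (open, non-empty, irreducible) Schur locus, on which the $G_{Q,\alpha}/\Gm$-action has trivial stabilizers, and then identify $k(M)$ with $k(Y)$ via the residue gerbe. Your route buys a slightly stronger conclusion (equality $\trdeg_kk(M)=1-\ang{\alpha,\alpha}$) at the cost of more stack-theoretic care at the end: the clean justification that the residue gerbe is a gerbe over $\Spec k(Y)$ is that the fiber of $[V/G_{Q,\alpha}]\to Y$ over the generic point is a reduced substack whose underlying topological space is a single point (the fiber of $V\to Y$ being a single orbit), hence coincides with the residue gerbe and is a $\Gm$-gerbe over $\Spec k(Y)$; alternatively, for the inequality you actually need, it suffices to note that the morphism to $Y$ forces every field of definition of $M$ to contain $k(Y)$, exactly as in the paper's use of $M^{\on{st}}_{Q,\alpha}$. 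One minor point you leave implicit is the behavior over a non-closed base; since the Schur locus is open in an affine space it is geometrically irreducible, and Rosenlicht's theorem holds over an arbitrary field, so this is harmless (the paper sidesteps it by reducing to $k=\cl{k}$ at the outset).
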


\begin{proof}
	We may assume that $k$ is algebraically closed. Since $\alpha$ is a Schur root, by \cite[Proposition 4.4]{king} there exists a non-empty coarse moduli space $M_{Q,\alpha}^{\on{st}}$ for stable $\alpha$-dimensional representations of $Q$, and it is irreducible of dimension $1-\ang{\alpha,\alpha}$. There is a dominant rational map $\mc{R}_{Q,\alpha}\dashrightarrow M_{Q,\alpha}^{\on{st}}$, that is, a dominant morphism from a non-empty open substack of $\mc{R}_{Q,\alpha}$. Let $\eta$ be the generic point of $M_{Q,\alpha}^{\on{st}}$.
	We have \[\on{trdeg}_kk(\eta)=\on{dim}M_{Q,\alpha}^{\on{st}}=1-\ang{\alpha,\alpha}.\] Let $M$ be a representation over a field $K$ such that the composition $\Spec K\to \mc{R}_{Q,\alpha}\dashrightarrow M_{Q,\alpha}^{\on{st}}$ is well defined and has image equal to $\eta$.  Any field of definition for $M$ must contain $k(\eta)$, hence \[\trdeg_kk(M)\geq\trdeg_kk(\eta)=1-\ang{\alpha,\alpha}.\qedhere\]
\end{proof}
Before proving an upper bound for $\trdeg_kk(M)$, we set some notation.
\begin{defin}
Let $Q$ be a quiver. For a vector $v\in \R^{Q_0}$, we denote by $|v|$ the sum of its coordinates. We define
\begin{align*}
H_Q:=&\set{\alpha\in\R^{Q_0}: |\alpha|=1},\\
S_Q:=&\set{\alpha\in H_Q:\alpha_i\geq 0\ \forall i\in Q_0},\\
\mathring{S}_Q:=&\set{\alpha\in H_Q:\alpha_i> 0\ \forall i\in Q_0}.
\end{align*}
 We denote by $\Lambda_Q$ the maximum of the opposite of the Tits form $-q_Q$ on $S_Q$.
\end{defin}
We note that $\Lambda_Q>0$ if and only if the quiver $Q$ is wild. 
\begin{lemma}\label{loose}	
	Let $K/k$ be a field extension, and let $M$ be an $\alpha$-dimensional $K$-representation of a wild quiver $Q$. Then \[\on{trdeg}_kk(M)\leq |\alpha|+\Lambda_Q|\alpha|^2.\]
\end{lemma}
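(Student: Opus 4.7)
The plan is to follow the pattern of \Cref{algclosedreduction}: reduce to an algebraically closed field, decompose into indecomposables, and bound each piece via \Cref{trdegindec}.

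First, since $k(M)=k(M_{\cl{K}})$, I may replace $K$ by $\cl{K}$ and thus assume $K$ is algebraically closed. Then I decompose $M = \bigoplus_{h} M_h$ into indecomposable $K$-representations of $Q$, with dimension vectors $\alpha^{(h)}$, so that $\sum_h \alpha^{(h)} = \alpha$. Since every field of definition of $M_h$ contains $k(M_h)$, taking a compositum of fields of definition (of near-minimal transcendence degree) shows, as in the proof of \Cref{algclosedreduction}, the sub-additivity inequality
\[\trdeg_k k(M) \;\leq\; \sum_h \trdeg_k k(M_h).\]

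Next, I apply \Cref{trdegindec} to each indecomposable summand $M_h$: there exist dimension vectors $\beta_j^{(h)}$ (arising from a general element $\psi$ of $j(\on{End}(M_h))$) such that $\sum_j \beta_j^{(h)} = \alpha^{(h)}$ (since $\psi$ is nilpotent on $M_h$, the successive quotients $\on{im}\psi^{j-1}/\on{im}\psi^j$ filter $M_h$) and
\[\trdeg_k k(M_h) \;\leq\; 1 - \sum_j \ang{\beta_j^{(h)},\beta_j^{(h)}}.\]
By the definition of $\Lambda_Q$, for any nonzero $\beta \in \N^{Q_0}$ the vector $\beta/|\beta|$ lies in $S_Q$, so $-\ang{\beta,\beta}/|\beta|^2 \leq \Lambda_Q$, i.e.\ $-\ang{\beta,\beta} \leq \Lambda_Q |\beta|^2$ (trivially true if $\beta=0$). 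Hence
\[\trdeg_k k(M_h) \;\leq\; 1 + \Lambda_Q \sum_j |\beta_j^{(h)}|^2 \;\leq\; 1 + \Lambda_Q \bigl(\textstyle\sum_j |\beta_j^{(h)}|\bigr)^2 = 1 + \Lambda_Q |\alpha^{(h)}|^2,\]
using that the terms $|\beta_j^{(h)}|$ are non-negative, so the sum of squares is at most the square of the sum.

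Finally, summing over $h$: since each indecomposable summand has $|\alpha^{(h)}| \geq 1$, the number of summands $N$ satisfies $N \leq \sum_h |\alpha^{(h)}| = |\alpha|$, and again $\sum_h |\alpha^{(h)}|^2 \leq |\alpha|^2$. Therefore
\[\trdeg_k k(M) \;\leq\; \sum_h \bigl(1+\Lambda_Q|\alpha^{(h)}|^2\bigr) \;\leq\; |\alpha| + \Lambda_Q |\alpha|^2,\]
as desired. The only step that requires care is the sub-additivity $\trdeg_k k(M) \leq \sum_h \trdeg_k k(M_h)$, but it is the same argument already used in \Cref{algclosedreduction}; everything else is a direct combinatorial consequence of \Cref{trdegindec} together with the definition of $\Lambda_Q$.
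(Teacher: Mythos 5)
Your proof is correct and follows essentially the same route as the paper's: reduce to $\cl{K}$ via $k(M)=k(M_{\cl{K}})$, decompose into indecomposables, bound each summand by \Cref{trdegindec} together with the inequality $-\ang{\beta,\beta}\leq\Lambda_Q|\beta|^2$ from the definition of $\Lambda_Q$, and conclude with the sum-of-squares estimate. The extra details you supply (why $\sum_j\beta_j^{(h)}=\alpha^{(h)}$, the subadditivity of $\trdeg_k k(-)$ over direct summands) are exactly the points the paper leaves implicit.
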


\begin{proof}
	Assume first that $K$ is algebraically closed and that $M$ is indecomposable.
	By \Cref{trdegindec} there exist dimension vectors $\beta_1,\dots,\beta_h$ such that $\sum\beta_h=\alpha$ and \[\trdeg_kk(M)\leq 1-\sum_h\ang{\beta_h,\beta_h}.\] By definition of $\Lambda_Q$,
	\[-\ang{\beta_h,\beta_h}\leq |\beta_h|^2\Lambda_Q,\] hence
	\[\trdeg_kk(M)\leq 1+\Lambda_Q(\sum_h|\beta_h|^2)\leq 1+\Lambda_Q|\alpha|^2.\]
	Let now $K$ be an arbitrary field extension of $k$, and let $M$ be a representation of $Q$ over $K$ of dimension vector $\alpha$. Since $k(M)=k(M_{\cl{K}})$, we may assume that $K$ is algebraically closed. The representation $M$ decomposes in at most $|\alpha|$ indecomposable representations $M_h$. Let $\alpha_h$ be the dimension vector of $M_h$. Then \[\trdeg_kk(M)\leq\sum_h \trdeg_kk(M_h)\leq \sum_h(1+\Lambda_Q|\alpha_h|^2)\leq |\alpha|+\Lambda_Q|\alpha|^2.\qedhere\]	
\end{proof}

From \Cref{loose} we see that, to understand the asymptotic behavior of $r_Q(n)$, we must first understand $\Lambda_Q$. 

\begin{lemma}\label{critical}
	Assume that $Q$ is a disjoint union of wild connected quivers. There is at most one critical point $\alpha\in \R^{Q_0}$ of $q_Q$ on $H_Q$. If it exists, it satisfies the equations \[(\alpha,e_i)=-2\lambda\] for each vertex $i$ and for some constant $\lambda$. The corresponding critical value of $q_Q$ is \[q_Q(\alpha)=\ang{\alpha,\alpha}=-{\lambda}.\] Moreover, $\lambda\in \Q$ and $\alpha_i\in\Q$ for every $i$.
\end{lemma}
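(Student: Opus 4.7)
The plan is to apply Lagrange multipliers on the affine hyperplane $H_Q$, which is cut out by the single linear equation $|\alpha| = 1$ whose gradient is the all-ones vector $\mathbf{1} = (1,\ldots,1)$. Since $(\cdot,\cdot)$ is the symmetric bilinear form associated with $q_Q$ (so that $q_Q(\alpha) = \tfrac{1}{2}(\alpha,\alpha)$), the partial derivative $\partial q_Q/\partial \alpha_i$ equals $(\alpha,e_i)$. Hence at every critical point of $q_Q|_{H_Q}$ there must exist a scalar — which I will call $-2\lambda$ to match the statement — such that $(\alpha,e_i) = -2\lambda$ for every vertex $i$. This gives the first displayed equation.

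To obtain the critical value, I expand using bilinearity:
\[
(\alpha,\alpha) = \sum_{i\in Q_0} \alpha_i\,(\alpha,e_i) = -2\lambda \sum_{i\in Q_0} \alpha_i = -2\lambda,
\]
so $q_Q(\alpha) = \tfrac{1}{2}(\alpha,\alpha) = -\lambda$, as required.

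For the uniqueness and rationality parts, let $C$ be the matrix of $(\cdot,\cdot)$ in the standard basis of $\R^{Q_0}$, so that $(C\alpha)_i = (\alpha,e_i)$; its entries are integers, and because $Q$ is a disjoint union of wild connected quivers the recalled characterization of representation type in \Cref{prelimquiver} guarantees that $C$ is non-degenerate. The critical-point conditions become the linear system $C\alpha = -2\lambda\,\mathbf{1}$ together with $\mathbf{1}^T\alpha = 1$. Inverting the first gives $\alpha = -2\lambda\, C^{-1}\mathbf{1}$, and substituting into the second yields $-2\lambda\,\mathbf{1}^T C^{-1}\mathbf{1} = 1$. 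This equation has at most one solution in $\lambda$: either $\mathbf{1}^T C^{-1}\mathbf{1}\neq 0$, in which case $\lambda = -1/(2\,\mathbf{1}^T C^{-1}\mathbf{1})$ and $\alpha$ is then uniquely determined, or $\mathbf{1}^T C^{-1}\mathbf{1} = 0$, in which case no critical point exists. In either case at most one critical point exists. Finally, since $C$ has integer entries $C^{-1}$ has rational entries, so $\lambda \in \Q$ and hence $\alpha = -2\lambda\, C^{-1}\mathbf{1}$ has rational coordinates.

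There is no real obstacle here: the only conceptual input is the identification of $(\cdot,\cdot)$ as the symmetric bilinear form associated with $q_Q$ and the invocation of the non-degeneracy of the Cartan matrix for wild components; everything else is a routine Lagrange-multiplier computation together with linear algebra over $\Q$.
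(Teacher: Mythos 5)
Your proposal is correct and follows essentially the same route as the paper: Lagrange multipliers on $H_Q$ yield $(\alpha,e_i)=-2\lambda$, the critical value follows from $\ang{\alpha,\alpha}=\tfrac12\sum_i\alpha_i(\alpha,e_i)=-\lambda$, and uniqueness plus rationality follow from the invertibility of the integral Cartan matrix. Your explicit case split on $\mathbf{1}^TC^{-1}\mathbf{1}$ and the justification of $\lambda,\alpha_i\in\Q$ are slightly more detailed than the paper's, but the argument is the same.
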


By definition, a critical point of $q_Q$ on $H_Q$ is a point at which all partial derivatives of $q_Q|_{H_Q}$ vanish. It is not necessarily a minimum or maximum of $q_Q|_{H_Q}$; see \Cref{k3k3}.

\begin{proof}
	We use the method of Lagrange multipliers. The constraint is given by \[\alpha_1+\dots+\alpha_n=1,\] and the partial derivatives of $q_Q$ are \[\partial_iq_Q(\alpha)=-(\alpha,e_i).\] Therefore, any critical point $\alpha$ must satisfy the equations \[(\alpha,e_i)=-2\lambda.\] for some $\lambda\in \mathbb{R}$. If $C_Q$ is the Cartan matrix of $Q$, these equations translate to \[C_Q\alpha=-2\lambda e,\] where $e=(1,\dots,1)$. Since $Q$ is a disjoint union of wild connected quivers, $C_Q$ is invertible. Therefore, a critical point $\alpha$ will lie in the intersection of the affine plane $H_Q$ with the line generated by $C_Q^{-1}e$, so there can be at most one. If a critical point $\alpha$ exists, the corresponding critical value of $q_Q$ is \[\ang{\alpha,\alpha}=\frac{1}{2}(\alpha,\alpha)=\frac{1}{2}\sum \alpha_i(\alpha,e_i)=-{\lambda}.\qedhere\]
\end{proof}

\begin{defin}
	Let $Q$ be a disjoint union of connected wild quivers. We denote by $\alpha_Q$ and $\lambda_Q$ the critical point $\alpha$ and the constant $\lambda$ of the previous lemma (if they exist). We say that $Q$ is \emph{effective} if $\alpha_Q$ exists, $\alpha_Q\in \mathring{S}_Q$ and $\lambda_Q>0$.
\end{defin}

Recall that a {subquiver} of $Q$ is a quiver $Q'$ such that $Q'_0\c Q_0$ and whose arrows are all the arrows of $Q$ between vertices in $Q'_0$. If $Q'$ is a subquiver of $Q$, the inclusion $Q'_0\c Q_0$ naturally identifies $\R^{Q'_0}$ with a subspace of $\R^{Q_0}$. 

\begin{lemma}\label{maxwild}
	Let $Q$ be an effective quiver with connected components $Q'_1,\dots, Q'_d$, and assume that $\lambda_{Q'_1}\geq \lambda_{Q'_h}$ for every $h=1,\dots,d$. Then $Q'_1$ is effective, and $\lambda_Q\leq \lambda_{Q'_1}$. 
\end{lemma}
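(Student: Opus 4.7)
The plan is to exploit the block-decomposition of the Tits form along connected components, and to read off the critical data of each $Q'_h$ from that of $Q$ via Lagrange multipliers.

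First I would set up coordinates. Write $Q_0 = \bigsqcup_{h=1}^d Q'_{h,0}$, so every $v \in \R^{Q_0}$ splits uniquely as $v = (v^{(1)}, \ldots, v^{(d)})$ with $v^{(h)} \in \R^{Q'_{h,0}}$. Since $Q$ has no arrows between distinct components, the definitions of the Tits form and its symmetrization give
\[
\ang{\alpha,\beta}_Q = \sum_{h=1}^d \ang{\alpha^{(h)}, \beta^{(h)}}_{Q'_h}, \qquad (\alpha,\beta)_Q = \sum_{h=1}^d (\alpha^{(h)}, \beta^{(h)})_{Q'_h}.
\]
In particular, $(\alpha_Q, e_i)_Q = (\alpha_Q^{(h)}, e_i)_{Q'_h}$ whenever $i \in Q'_{h,0}$.

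Next I would rescale each block so it sits on the corresponding hyperplane. Put $s_h := |\alpha_Q^{(h)}|$. Because $Q$ is effective, $\alpha_Q \in \mathring{S}_Q$, so every coordinate of $\alpha_Q$ is strictly positive; thus each $s_h > 0$ and $\sum_h s_h = |\alpha_Q| = 1$. Set $\beta_h := \alpha_Q^{(h)}/s_h$, which lies in $\mathring{S}_{Q'_h}$. Combining $(\alpha_Q, e_i)_Q = -2\lambda_Q$ for each $i \in Q_0$ with the decomposition above, one obtains, for every $h$ and every $i \in Q'_{h,0}$,
\[
(\beta_h, e_i)_{Q'_h} = \tfrac{1}{s_h} (\alpha_Q^{(h)}, e_i)_{Q'_h} = -\tfrac{2\lambda_Q}{s_h}.
\]
Hence $\beta_h$ is a critical point of $q_{Q'_h}|_{H_{Q'_h}}$ with Lagrange multiplier $\lambda_Q/s_h$. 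By the uniqueness clause in Lemma~\ref{critical}, $\beta_h = \alpha_{Q'_h}$ and $\lambda_{Q'_h} = \lambda_Q/s_h$. Since $\beta_h \in \mathring{S}_{Q'_h}$ and $\lambda_Q > 0$, each $Q'_h$ is effective; in particular so is $Q'_1$.

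Finally, from $\lambda_Q = s_1 \lambda_{Q'_1}$, together with $0 < s_1 \leq 1$ and $\lambda_{Q'_1} > 0$, one concludes $\lambda_Q \leq \lambda_{Q'_1}$. (Note that the hypothesis that $\lambda_{Q'_1}$ is the largest of the $\lambda_{Q'_h}$ is not actually used in this argument; the inequality $\lambda_Q \leq \lambda_{Q'_h}$ holds for every $h$.) There is no serious obstacle: the only thing to watch is that the nondegeneracy required to invoke Lemma~\ref{critical} on each component is automatic because every $Q'_h$, being a connected component of a disjoint union of wild connected quivers, is itself wild and connected, so its Cartan matrix is invertible.
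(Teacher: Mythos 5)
Your proof is correct. It follows the same skeleton as the paper's: split $\alpha_Q$ along the connected components, normalize each block to land on $H_{Q'_h}$, and invoke the uniqueness clause of \Cref{critical} to identify the normalized block with $\alpha_{Q'_h}$. The difference is in what you extract from that identification. The paper works at the level of critical \emph{values}: from $q_Q = \perp_h q_{Q'_h}$ it gets $\lambda_Q = \sum_h s_h^2\,\lambda_{Q'_h}$ (with $s_h = |\alpha_Q^{(h)}|$), and then genuinely needs the hypothesis $\lambda_{Q'_1}\geq\lambda_{Q'_h}$ twice: once to rule out $\lambda_{Q'_h}\leq 0$ for all $h$ (hence $\lambda_{Q'_1}>0$), and once to bound the weighted sum by $(\sum_h s_h^2)\lambda_{Q'_1}\leq\lambda_{Q'_1}$. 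You instead work at the level of the Lagrange \emph{multipliers}: reading off $(\beta_h,e_i)_{Q'_h}=-2\lambda_Q/s_h$ gives the exact relation $\lambda_{Q'_h}=\lambda_Q/s_h$ for every $h$, which immediately yields that \emph{every} component is effective and that $\lambda_Q = s_h\lambda_{Q'_h}\leq\lambda_{Q'_h}$ for every $h$ — so, as you note, the maximality hypothesis is superfluous. (Your relation is consistent with the paper's: $\sum_h s_h^2(\lambda_Q/s_h)=\lambda_Q$, and it checks out on the $K_3\sqcup K_3$ example, where $s_1=s_2=\tfrac12$, $\lambda_Q=\tfrac18$, $\lambda_{Q'_h}=\tfrac14$.) Your version is slightly sharper and costs nothing extra; the one point you rightly flag — that each $Q'_h$ is wild and connected so \Cref{critical} applies — is built into the paper's definition of an effective quiver.
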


\begin{proof}
	Since $Q$ is effective, $\alpha_Q$ exists and belongs to $\mathring{S}_Q$. We may write $\alpha_Q=(\alpha'_1,\dots,\alpha'_d)$, where $\alpha'_h\in\R^{Q'_h}$ for each $h=1,\dots,d$. Since $q_{Q}$ is the orthogonal direct sum of the $q_{Q'_h}$, we have that \[q_Q(\alpha_Q)=q_{Q_1'}(\alpha'_1)+\dots+q_{Q_d}'(\alpha'_d).\] Note that $\alpha'_h/|\alpha'_h|$ is a critical point for $q_{Q'_h}|_{H_{Q'_h}}$, for every $h=1,\dots,d$. Since the coordinates of $\alpha'_h/|\alpha'_h|$ sum to $1$, from the uniqueness part of \Cref{critical} we obtain that $\alpha'_h/|\alpha'_h|=\alpha_{Q'_h}$, i.e. $\alpha'_h=|\alpha'_h|\alpha_{Q'_h}$. In particular, $\alpha_{Q'_h}$ exists and belongs to $\mathring{S}_{Q'_h}$, for every $h$. Substituting into the previous equation, we obtain \[\lambda_Q=|\alpha'_1|^2\lambda_{Q'_1}+\dots+|\alpha'_d|^2\lambda_{Q'_d}.\]
	Since $Q$ is effective, we have $\lambda_Q>0$. It is thus impossible that $\lambda_{Q'_h}\leq 0$ for every $h$, and so $\lambda_{Q'_1}>0$. We already showed that $\alpha_{Q'_1}\in \mathring{S}_{Q'_1}$, hence $Q'_1$ is effective.
	
	Since $|\alpha'_h|\geq 0$ for every $h$ and $|\alpha'_1|+\dots+|\alpha'_d|=1$, we have $|\alpha'_h|^2\leq |\alpha'_h|$ for all $h$. We conclude that
	\begin{equation*}\lambda_Q\leq (|\alpha'_1|^2+\dots+|\alpha'_d|^2)\lambda_{Q'_1}\leq(|\alpha'_1|+\dots+|\alpha'_d|)\lambda_{Q'_1}=\lambda_{Q'_1},\end{equation*}
	as desired.
\end{proof}

Recall that, by definition, $\Lambda_Q$ is the maximum of $-q_Q$ on $S_Q$. If the maximum of $-q_Q$ is attained in $S_Q$, then by \Cref{critical} we have $\Lambda_Q=\lambda_Q$. If $\alpha$ belongs to the boundary of $S_Q$ (as in \Cref{k3k3} below), we may consider the subquiver $Q'$ whose vertices correspond to the non-zero entries of $\alpha$. If all connected components of $Q'$ were wild, then by \Cref{critical} we would get $\Lambda_Q=\Lambda_{Q'}=\lambda_{Q'}$. In fact, as we now show, one may always arrange for $Q'$ to be wild and connected. We thus obtain the following formula for $\Lambda_Q$, which will be used in the proof of \Cref{wild}.

\begin{prop}\label{maximum}
	Let $Q$ be a wild quiver. Then there exists an effective subquiver of $Q$. Moreover, we have \[\Lambda_Q=\max_{Q'}\lambda_{Q'},\] where the maximum is taken over all effective wild connected subquivers $Q'$ of $Q$.
\end{prop}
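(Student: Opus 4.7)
The plan is to locate an explicit effective wild connected subquiver $Q^*$ realizing the maximum $\Lambda_Q$, which will prove both assertions at once. Since $S_Q$ is a compact simplex and $-q_Q$ is continuous, the maximum $\Lambda_Q$ is attained at some $\alpha^*\in S_Q$, and $\Lambda_Q>0$ because $Q$ is wild. Let $T:=\{i\in Q_0:\alpha^*_i>0\}$ and let $Q^*$ be the full subquiver of $Q$ on $T$, so that $\alpha^*\in\mathring{S}_{Q^*}$ by construction. I aim to show that $Q^*$ is connected, wild, and effective, and that $\lambda_{Q^*}=\Lambda_Q$.

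The key step is the connectedness of $Q^*$. If $Q^*$ decomposed as $C_1\sqcup C_2$ with both parts nonempty, write $\alpha^*=\beta+\gamma$ accordingly and set $b:=|\beta|$, $c:=|\gamma|$, so $b+c=1$ and $b,c>0$. Because $Q^*$ is a full subquiver of $Q$, there are no arrows of $Q$ between $C_1$ and $C_2$, hence the Tits form splits orthogonally: $q_Q(\alpha^*)=q_Q(\beta)+q_Q(\gamma)$. The vectors $\beta/b$ and $\gamma/c$ lie in $S_Q$, so $-q_Q(\beta)\leq b^2\Lambda_Q$ and $-q_Q(\gamma)\leq c^2\Lambda_Q$, which combine to give $\Lambda_Q=-q_Q(\alpha^*)\leq(b^2+c^2)\Lambda_Q<\Lambda_Q$, a contradiction. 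Wildness of $Q^*$ then follows immediately: if $Q^*$ were of finite (respectively tame) representation type, $q_{Q^*}$ would be positive definite (respectively positive semidefinite), contradicting $-q_{Q^*}(\alpha^*)=\Lambda_Q>0$.

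Finally, since $\alpha^*$ lies in the interior of $S_{Q^*}$ and maximizes $-q_{Q^*}|_{S_{Q^*}}$, it is an interior critical point of $-q_{Q^*}$ on the affine hyperplane $H_{Q^*}$. Applying \Cref{critical} to the wild connected quiver $Q^*$, I identify $\alpha^*$ with the unique critical point $\alpha_{Q^*}$ and obtain $\lambda_{Q^*}=-q_{Q^*}(\alpha^*)=\Lambda_Q>0$, so $Q^*$ is effective. This yields existence of an effective subquiver and the inequality $\Lambda_Q\leq\max_{Q'}\lambda_{Q'}$. The reverse inequality is formal: for any effective wild connected subquiver $Q'$, the vector $\alpha_{Q'}$ lies in $\mathring{S}_{Q'}\subseteq S_Q$ (after extending by zero), and since $q_{Q'}$ is the restriction of $q_Q$ to $\R^{Q'_0}$, we have $\lambda_{Q'}=-q_Q(\alpha_{Q'})\leq\Lambda_Q$. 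I expect the connectedness argument (exploiting the orthogonal splitting on a full subquiver together with the strict inequality $b^2+c^2<1$ for $b,c\in(0,1)$) to be the main obstacle; the remaining steps then reduce to bookkeeping via \Cref{critical}.
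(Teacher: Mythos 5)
Your proof is correct, and it follows the same skeleton as the paper's (compactness of $S_Q$ to produce a maximizer $\alpha^*$, passing to the full subquiver on its support, identifying $\alpha^*$ with the critical point of \Cref{critical}, and the formal reverse inequality by extending $\alpha_{Q'}$ by zero), but it handles the crucial connectedness issue by a genuinely different and cleaner route. The paper does not show directly that the support of the maximizer is connected: it first prunes non-wild connected components from the support (using positive semidefiniteness of $q_{Q''}$ and the orthogonal splitting to see that deleting such a component does not decrease $-q$), then applies \Cref{critical} to the resulting disjoint union of wild components, and finally invokes \Cref{maxwild} to descend to a single connected component with $\lambda_{Q'_1}=\Lambda_Q$. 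Your argument instead shows outright that the support $Q^*$ is connected, via the orthogonal splitting $q_Q(\alpha^*)=q_Q(\beta)+q_Q(\gamma)$ on a full subquiver together with $-q_Q(\beta)\leq b^2\Lambda_Q$, $-q_Q(\gamma)\leq c^2\Lambda_Q$ and the strict inequality $b^2+c^2<1$ for $b,c\in(0,1)$; connectedness then makes the pruning step and \Cref{maxwild} unnecessary, since wildness of $Q^*$ follows at once from $q_{Q^*}(\alpha^*)<0$. What your approach buys is a shorter, self-contained proof of this proposition that establishes the slightly stronger fact that every maximizer of $-q_Q$ on $S_Q$ has connected support; what the paper's approach buys is \Cref{maxwild} itself, which is reused later (e.g., in the proof of \Cref{wild}) and records quantitative information about how $\lambda_Q$ distributes over the components. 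All the individual steps you give check out, including the identification of an interior maximum on $S_{Q^*}$ with a critical point on $H_{Q^*}$.
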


\begin{proof}
	Let $Q'$ be an effective wild connected subquiver of $Q$, and view $\alpha_{Q'}$ as a vector in $\R^{Q_0}$ by setting the extra coordinates equal to zero. Then $\Lambda_Q\geq -q_Q(\alpha_{Q'})= -q_{Q'}(\alpha_{Q'})=\lambda_{Q'}$. Letting $Q'$ vary, we obtain $\Lambda_Q\geq\max_{Q'}\lambda_{Q'}$. It thus suffices to find $Q'$ such that $\Lambda_Q=\lambda_{Q'}$.
	
	Since $S_Q$ is compact and $q_Q$ is continuous, there exists a vector $\alpha\in S_Q$ minimizing $q_Q|_{S_Q}$, that is, satisfying $\Lambda_Q=-q_Q(\alpha)$. Since $Q$ is wild, $q_Q$ is indefinite, hence $-q_Q(\alpha)>0$. Let $Q'$ be the subquiver of $Q$ defined by \[Q'_0:=\set{i\in Q_0:\alpha_{i}\neq 0}.\] If we regard $\alpha$ also as a vector in $\R^{Q'_0}$, then $q_Q(\alpha)=q_{Q'}(\alpha)$, $\alpha\in \mathring{S}_{Q'}$, and $\alpha$ minimizes $q_{Q'}$ on ${S}_{Q'}$. In particular, $\alpha$ is a critical point for $q_{Q'}|_{H_{Q'}}$. 
	
	The subquiver $Q'$ is wild, because $q_{Q'}(\alpha)<0$. Now, if $Q''$ is a non-wild connected component of $Q'$, define a vector $\beta\in\R^{Q'_0}$ by setting $\beta_i=0$ if $i\in Q''$ and $\beta_i=\alpha_{i}$ otherwise, and set $\gamma:=\beta/|\beta|$. Then $|\gamma|=1$ and  $\gamma_i\geq 0$ for each vertex $i$ of $Q'$, i.e. $\gamma\in S_Q$. Moreover, since $q_{Q''}$ is positive semi-definite and $q_{Q'}=q_{Q''}\perp q_{Q'\setminus Q''}$, we have $q_{Q'}(\gamma)\leq q_{Q'}(\alpha)$, hence $q_{Q'}(\gamma)= q_{Q'}(\alpha)$. Here $Q'\setminus Q''$ is the subquiver of $Q'$ with set of vertices equal to $Q'_0\setminus Q''_0$. Since $\gamma$ is supported in $Q'\setminus Q''$, this shows that we can remove $Q''$ from $Q'$, that is, we may assume that every connected component of $Q'$ is wild. 
	
	By \Cref{critical}, $\alpha=\alpha_{Q'}$ is the unique critical point of $q_{Q'}|_{H_{Q'}}$ and $\Lambda_Q=-q_Q(\alpha)=-q_{Q'}(\alpha)=\lambda_{Q'}$.  As $Q$ is wild, we have $\Lambda_Q>0$, hence $\lambda_{Q'}>0$. We already noted that $\alpha\in \mathring{S}_Q$, hence $Q'$ is effective. By \Cref{maxwild}, we are allowed to pass to a connected component of $Q'$. Therefore, we may assume that $Q'$ is connected, and this concludes the proof. 
\end{proof}

\begin{example}\label{k3k3}
	We illustrate \Cref{maximum} and its proof by computing $\Lambda_Q$ in the case where $Q$ is the disjoint union of two quivers $K_3$:
	\[
	\begin{tikzcd}[row sep=1ex]
	& 1 \arrow[r] \arrow[r,shift left=.8ex] \arrow[r,shift right=.8ex] & 2 	& 3 \arrow[r] \arrow[r,shift left=.8ex] \arrow[r,shift right=.8ex] & 4. 
	\end{tikzcd}
	\] 
	The Tits form of $Q$ is \[q_Q(\alpha)=\alpha_1^2+\alpha_2^2+\alpha_3^2+\alpha_4^2-3\alpha_1\alpha_2-3\alpha_3\alpha_4.\] As in the proof of \Cref{critical}, we may compute the critical point $\alpha_Q$ of $q_Q|_{H_Q}$ as the solution of the system of linear equations \[3\alpha_2-2\alpha_1=3\alpha_1-2\alpha_2=3\alpha_3-2\alpha_4=3\alpha_4-2\alpha_3,\quad \alpha_1+\alpha_2+\alpha_3+\alpha_4=1.\] Thus $\alpha_Q=(\frac{1}{4},\frac{1}{4},\frac{1}{4},\frac{1}{4})$ and $\lambda_Q=\frac{1}{8}$. In particular, $Q$ is effective. Note that $\lambda_Q$ is not the maximum of $-q_Q|_{H_Q}$, because $-q_Q$ is not bounded from above on $H_Q$: \[-q_Q(t,t,\frac{1}{2}-t,\frac{1}{2}-t)=2t^2+O(t),\quad (t\to\infty).\] 
	Let $Q_1$ and $Q_2$ denote the two $K_3$ subquivers of $Q$. The associated critical points are $\alpha_{Q_1}=(\frac{1}{2},\frac{1}{2},0,0)$ and $\alpha_{Q_2}=(0,0,\frac{1}{2},\frac{1}{2})$, and the corresponding critical value is $\frac{1}{4}$ in both cases. Note that this is accordance with \Cref{maxwild}. In particular, $Q_1$ and $Q_2$ are effective. By \Cref{maximum}, we obtain $\Lambda_Q=\lambda_{Q_1}=\lambda_{Q_2}=\frac{1}{4}$.
	
	For this particular $Q$, the compactness argument in the second paragraph of the proof of \Cref{maximum} would select either $\alpha_{Q_1}$ or $\alpha_{Q_2}$ as $\alpha$, hence either $Q_1$ or $Q_2$ as $Q'$. 
\end{example}

\begin{defin}
    We say that the wild quiver $Q$ is a \emph{minimal wild quiver} if there are no wild subquivers of $Q$ other than $Q$. We say that the underlying graph of $Q$ is a \emph{minimal wild graph} if for every arrow $a:i\to j$ of $Q$, removing $a$ gives a quiver $Q'$ that is not wild.
\end{defin} 
Recall that a subquiver of $Q$ is obtained by choosing a subset of $Q_0$ and considering all arrows between said vertices. Hence subquivers of $Q$ bijectively correspond to subsets of $Q_0$. Note that if the underlying graph of $Q$ is a minimal wild graph, then $Q$ is a minimal wild quiver, but the converse need not be true.

As an example, consider the generalized Kronecker quiver $K_r$:
	\[
\begin{tikzcd}[row sep=1ex]
& 1 \arrow[r] \arrow[r,shift left=.8ex, "r"] \arrow[r,shift right=.8ex] & 2 
\end{tikzcd}
\] 
If $r\geq 3$, $K_r$ is a minimal wild quiver. Its underlying graph is a minimal wild graph if and only if $r=3$. Another example is the loop quiver $L_r$ for $r\geq 2$: it is a minimal wild quiver because it has no non-empty subquivers, but its underlying graph is minimal wild if and only if $r=2$.

There are $18$ minimal graphs, and they are listed in \cite[Lecture 6, Subsection 6.7]{leit6}. They are all obtained by adjoining one vertex to a tame quiver of at most $9$ vertices; see the picture of  \cite[Lecture 6, p. 9]{leit6}.

\begin{lemma}\label{minimaleffective}
	Let $Q$ be a minimal wild quiver. Then $Q$ is connected and effective, and $\Lambda_Q=\lambda_Q$.
\end{lemma}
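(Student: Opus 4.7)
The plan is to derive all three assertions as essentially a corollary of Proposition~\ref{maximum}. That proposition already produces, for any wild quiver $Q$, an effective wild connected subquiver $Q'$ of $Q$ with $\Lambda_Q=\lambda_{Q'}$. Under the minimality hypothesis on $Q$, the only candidate for $Q'$ is $Q$ itself, and everything we need drops out. The only piece that needs a separate argument is connectedness, which I would dispense with first.

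\textbf{Connectedness.} I would argue by contradiction: suppose $Q=Q^{(1)}\sqcup\cdots\sqcup Q^{(d)}$ with $d\geq 2$ connected components. The Cartan matrix $C_Q$ is block diagonal with blocks $C_{Q^{(i)}}$, and the Tits form splits as an orthogonal direct sum $q_Q=q_{Q^{(1)}}\perp\cdots\perp q_{Q^{(d)}}$. If every $Q^{(i)}$ were non-wild, then every $q_{Q^{(i)}}$ would be positive semi-definite, hence so would $q_Q$ be, contradicting the fact that $Q$ is wild (equivalently, $q_Q$ is indefinite). Thus at least one $Q^{(i)}$ is wild; but each $Q^{(i)}$ is a proper subquiver of $Q$, contradicting minimality. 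So $d=1$ and $Q$ is connected.

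\textbf{Effectiveness and the identity $\Lambda_Q=\lambda_Q$.} Since $Q$ is wild, Proposition~\ref{maximum} provides an effective wild connected subquiver $Q'\subseteq Q$ satisfying $\Lambda_Q=\lambda_{Q'}$. Because $Q$ is minimal wild, the only wild subquiver of $Q$ is $Q$ itself, forcing $Q'=Q$. Consequently $Q$ is effective, and $\Lambda_Q=\lambda_Q$, as claimed.

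The main (and essentially only) potential obstacle is the implicit claim in the connectedness step that a disjoint union of non-wild quivers is non-wild, but this is immediate from the block-diagonal structure of the Cartan matrix together with the characterization of wildness in terms of indefiniteness of $q_Q$ recalled in Section~\ref{prelimquiver}.
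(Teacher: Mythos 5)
Your proof is correct and follows essentially the same route as the paper: connectedness via the observation that a disjoint union of non-wild quivers is non-wild (so a wild component would be a proper wild subquiver), and then effectiveness together with $\Lambda_Q=\lambda_Q$ by invoking Proposition~\ref{maximum} and noting that minimality forces the subquiver $Q'$ it produces to equal $Q$. The paper's proof is just a terser version of exactly this argument.
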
	

\begin{proof}
	Since a disjoint union of non-wild quivers is non-wild, a minimal wild quiver is connected. Moreover, every proper subquiver of $Q$ is non-wild. The claim now follows from \Cref{maximum}.
\end{proof}

Before stating the main result of this section, we need one last lemma.

\begin{lemma}\label{itsschur}
	Let $Q'$ be an effective wild connected subquiver of $Q$, and let $m$ be a positive integer such that $m\alpha_{Q'}$ has integral entries. Then $m\alpha_{Q'}$ is a Schur root of $Q$. 
\end{lemma}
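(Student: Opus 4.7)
My plan is to show that $m\alpha_{Q'}$ lies in the fundamental region $F_Q$, with strict inequality $(m\alpha_{Q'}, e_i) < 0$ for every $i$ in its support, and then invoke Kac's theorem to conclude that $m\alpha_{Q'}$ is a Schur root.

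\textbf{Step 1: connected support.} View $m\alpha_{Q'}$ as an element of $\N^{Q_0}$ by setting the extra coordinates to zero. Since $Q'$ is effective we have $\alpha_{Q'} \in \mathring{S}_{Q'}$, so the support of $m\alpha_{Q'}$ is exactly $Q'_0$, which is connected because $Q'$ is a connected quiver.

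\textbf{Step 2: checking the fundamental-region inequalities.} By definition of subquiver, the arrows of $Q'$ are \emph{all} arrows of $Q$ between vertices of $Q'_0$, so for vectors supported in $Q'_0$ the restriction of the bilinear form of $Q$ agrees with the bilinear form of $Q'$. For $i \in Q'_0$, the terms involving vertices outside $Q'_0$ contribute zero, so
\[
(m\alpha_{Q'}, e_i)_Q = (m\alpha_{Q'}, e_i)_{Q'} = -2m\lambda_{Q'} < 0,
\]
using $\lambda_{Q'} > 0$ (since $Q'$ is effective). For $i \notin Q'_0$, the entry $(m\alpha_{Q'})_i = 0$, and so
\[
(m\alpha_{Q'}, e_i) = -\sum_{a:i\to j} m(\alpha_{Q'})_j - \sum_{a:j\to i} m(\alpha_{Q'})_j \leq 0,
\]
because every $(\alpha_{Q'})_j \geq 0$. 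Therefore $m\alpha_{Q'} \in F_Q$, and moreover the defining inequalities of $F_Q$ are \emph{strict} at every vertex in the support.

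\textbf{Step 3: applying Kac's theorem.} By Kac's theorem, any nonzero element of $F_Q$ is a positive imaginary root of $Q$; and it is a well-known refinement (see \cite[Theorem 4, Lemma]{kac1}) that if $\alpha\in F_Q$ has connected support and satisfies $(\alpha, e_i) < 0$ for all $i \in \on{supp}(\alpha)$, then the general representation of dimension vector $\alpha$ is a brick, i.e.\ $\alpha$ is a Schur root. Applying this to $m\alpha_{Q'}$, which we have just verified satisfies these conditions, gives the desired conclusion.

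The main obstacle to watch out for is cleanly justifying the strict inequality in Step 2: one must carefully distinguish between the bilinear form of $Q$ and that of $Q'$, and use the definition of subquiver (all arrows between vertices of $Q'_0$ belong to $Q'_1$) together with effectiveness of $Q'$ (so that $\alpha_{Q'}$ has all positive entries and $\lambda_{Q'} > 0$). Once these are in place, the Schur property follows at once from Kac's criterion.
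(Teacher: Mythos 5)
Your proof is correct and follows essentially the same route as the paper: both arguments reduce to showing that $(m\alpha_{Q'},e_i)=-2m\lambda_{Q'}<0$ on $Q'_0$ (via the criticality equations and $\lambda_{Q'}>0$), placing $m\alpha_{Q'}$ in the fundamental region with the anisotropy needed to invoke the standard Schur-root criterion (the paper cites \cite[Proposition 4.14]{lebruyn} for this). The only cosmetic difference is that the paper verifies the condition inside $Q'$ and then extends a brick by zero to $Q$, whereas you check the fundamental-region inequalities of $Q$ directly, handling the vertices outside $Q'_0$ by the trivial estimate $(m\alpha_{Q'},e_i)\leq 0$.
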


\begin{proof}
Since $Q'$ is effective, we have $\lambda_{Q'}<0$. It follows from \Cref{critical} that \[(m\alpha_{Q'},e_i)=-2m\lambda_{Q'}<0\] for every vertex $i$ of $Q'$. This implies that the support of $m\alpha_{Q'}$ is a wild quiver and that $m\alpha_{Q'}$ belongs to the fundamental region of $Q'$. By \cite[Proposition 4.14]{lebruyn}, it follows that $m\alpha_{Q'}$ is a Schur root of $Q'$. Let $K/k$ be a field extension and let $M'$ be a $K$-representation of $Q'$ such that $\on{End}_K(M')=K$. We may trivially extend $M'$ to a $K$-representation $M$ of $Q$, by letting $M_i=0$ for every $i\in Q_0\setminus Q'_0$. We have $\on{End}_K(M)=\on{End}_K(M')=K$, hence $\alpha_{Q'}$ is a Schur root for $Q$.
\end{proof}

We are now ready to compute $a_2^{\pm}(kQ)$ for every wild quiver $Q$. This completes the proof of \Cref{wild-intro}. 

\begin{prop}\label{wild}
	Let $Q$ be a wild quiver. Then:
	\begin{enumerate}[label=(\alph*)]
		\item\label{wild1} $a_2(kQ)=\Lambda_Q$;
		\item\label{wild2} $a_2(kQ)\geq \frac{1}{2480}$;
		\item\label{wild3} we have $a_2(kQ)=\frac{1}{2480}$ if and only if the underlying diagram of $Q$ is a disjoint union of a (possibly empty) non-wild quiver and copies of the graph $\widetilde{\widetilde{E_8}}$
		\[
		\begin{tikzcd}
		&& 4 \\
		1 \arrow[r,-]
		& 2 \arrow[r,-]
		& 3 \arrow[r,-] \arrow[u,-]
		& 5 \arrow[r,-]
		& 6 \arrow[r,-]
		& 7 \arrow[r,-]
		& 8 \arrow[r,-]
		& 9 \arrow[r,-]
		& 10.
		\end{tikzcd}
		\]\noindent
	\end{enumerate}
\end{prop}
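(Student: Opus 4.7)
The plan is to prove (a) by matched asymptotic bounds for $r_Q(n)/n^2$, and then to reduce (b) and (c) to a finite case analysis of the 18 minimal wild graphs.

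For part (a), I first establish $\limsup_{n\to\infty} r_Q(n)/n^2 \leq \Lambda_Q$. Given any $K$-representation $M$ of $Q$ of dimension vector $\alpha$ with $|\alpha| \leq n$, combining Lemmas \ref{arithmetic-geometric}, \ref{edkm}, and \ref{loose} yields
\[
\ed_k M = \trdeg_k k(M) + \ed_{k(M)} M \leq (|\alpha| + \Lambda_Q |\alpha|^2) + (|\alpha| - 1) \leq \Lambda_Q n^2 + 2n - 1.
\]
For the reverse inequality, apply Proposition \ref{maximum} to obtain an effective wild connected subquiver $Q' \subseteq Q$ with $\lambda_{Q'} = \Lambda_Q$. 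Since $\alpha_{Q'}$ is rational (Lemma \ref{critical}), choose an integer $m_0 \geq 1$ for which $m_0 \alpha_{Q'}$ is integral. For every $t \geq 1$, Lemma \ref{itsschur} says $tm_0 \alpha_{Q'}$ is a Schur root of $Q$, and Lemma \ref{schurlower} then produces a representation $M_t$ of $Q$ of dimension $|tm_0\alpha_{Q'}| = tm_0$ with
\[
\ed_k M_t \geq \trdeg_k k(M_t) \geq 1 - \langle tm_0\alpha_{Q'},tm_0\alpha_{Q'}\rangle = 1 + (tm_0)^2 \lambda_{Q'}.
\]
Using monotonicity of $r_Q$ (Lemma \ref{nondecreasing2}) to interpolate between multiples of $m_0$ gives $\liminf_{n\to\infty} r_Q(n)/n^2 \geq \lambda_{Q'} = \Lambda_Q$. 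Hence $a_2(kQ) = \Lambda_Q$, which is positive (since $Q$ is wild) and rational (by Lemma \ref{critical}).

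For parts (b) and (c), my plan is to reduce to a case analysis of the 18 minimal wild graphs listed in the cited reference. The key observation is that $\Lambda_Q$ depends only on the underlying graph of $Q$ (since the Tits form does) and is monotone nondecreasing under vertex or edge insertion: if $Q \subseteq Q'$ then $S_Q$ embeds into $S_{Q'}$ by extension by zero, and adding edges only makes $-q$ more positive at any point. Every wild $Q$ contains some graph-minimal wild connected subquiver $Q_0$, and by Lemma \ref{minimaleffective} such $Q_0$ is effective with $\Lambda_{Q_0} = \lambda_{Q_0}$. Thus $\Lambda_Q \geq \lambda_{Q_0}$, and consequently $\inf_{Q \text{ wild}} \Lambda_Q = \min_{Q_0} \lambda_{Q_0}$ where $Q_0$ ranges over these 18 minimal graphs.

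For (b), for each of the 18 minimal wild graphs I solve the linear system $C_{Q_0}\alpha_{Q_0} = -2\lambda_{Q_0} \mathbf{1}$ subject to $|\alpha_{Q_0}| = 1$, and find that the minimum value $1/2480$ is attained uniquely at $\widetilde{\widetilde{E_8}}$. For (c), equality $a_2(kQ) = 1/2480$ forces every wild connected component $C$ of $Q$ to satisfy $\Lambda_C = 1/2480$, which by the case analysis means the graph-minimal wild subquiver of $C$ must be $\widetilde{\widetilde{E_8}}$; a strict-monotonicity argument, exploiting that $\alpha_{\widetilde{\widetilde{E_8}}}$ has all coordinates strictly positive, then shows that adding any edge or vertex to $\widetilde{\widetilde{E_8}}$ would strictly increase $\Lambda$, so $C = \widetilde{\widetilde{E_8}}$. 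Non-wild components contribute nothing to $\Lambda_Q$ and are unconstrained. The main obstacle is the case-by-case computation for the 18 minimal wild graphs, together with the strict monotonicity verification needed to rule out properly larger wild components in the equality case.
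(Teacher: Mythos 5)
Parts (a) and (b) of your proposal follow the paper's proof essentially verbatim: the upper bound from \Cref{edkm} and \Cref{loose}, the lower bound from \Cref{maximum}, \Cref{critical}, \Cref{itsschur} and \Cref{schurlower} applied to multiples of $m_0\alpha_{Q'}$, and the reduction of (b) to the $18$ minimal wild graphs via monotonicity of $\Lambda$ under deleting vertices and arrows. Those parts are correct.

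In part (c) there is a genuine gap. You claim that ``adding any edge or vertex to $\widetilde{\widetilde{E_8}}$ strictly increases $\Lambda$, exploiting that $\alpha_{\widetilde{\widetilde{E_8}}}$ has all coordinates strictly positive.'' Strict positivity does handle the case of an added \emph{arrow} between existing vertices $i,j$, since $-q$ increases by $\alpha_i\alpha_j>0$ at the old maximizer. But it does not handle an added \emph{vertex} $i$: extending $\alpha_{\widetilde{\widetilde{E_8}}}$ by zero at $i$ leaves $-q$ unchanged (the new arrow contributes $-\alpha_i\alpha_j=0$), so the old maximizer gives no improvement and positivity alone proves nothing. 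To get strictness one must perturb, e.g.\ take $\beta_\epsilon=(1-\epsilon)\alpha+\epsilon e_i$ and compute $-q(\beta_\epsilon)=\Lambda_Q+\epsilon(\alpha_j-2\Lambda_Q)+O(\epsilon^2)$ when $i$ is attached to $j$; the first-order term is positive only because $\min_j\alpha_j=\tfrac{30}{1240}>2\Lambda_{\widetilde{\widetilde{E_8}}}=\tfrac{1}{1240}$, a numerical inequality you must check and which is strictly stronger than positivity. The paper instead handles vertex addition by a case analysis on the attachment vertex $j$: for $1\leq j\leq 8$ the enlarged quiver contains a minimal wild subgraph of a type other than $\widetilde{\widetilde{E_8}}$ (hence $\Lambda>\tfrac{1}{2480}$ by the list), while for $j=9,10$ no such forbidden subgraph appears and one must explicitly compute $\lambda_{Q''}=\tfrac{1}{680}$ and $\tfrac{1}{1639}$. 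Either route closes the gap, but as written your argument for the vertex case is not a proof; this is precisely the delicate point of part (c).
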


\begin{proof}
(a) Let $n\geq 1$, $K/k$ be a field extension, and $M$ be an $\alpha$-dimensional $K$-representation of $Q$, where $|\alpha|=n$. By \Cref{edkm}, we know that $\ed_{k(M)}M<n$. Furthermore, by \Cref{loose}, $\trdeg_kk(M)\leq {n}+\Lambda_Qn^2$. Thus \[\frac{r_{Q}(n)}{n^2}<\frac{2}{n}+\Lambda_Q.\] Letting $n$ tend to infinity, we obtain \begin{equation}\label{a2+}
a_2^+(kQ)=\limsup_{n\to\infty}\frac{r_{Q}(n)}{n^2}\leq\Lambda_Q.
\end{equation}
	We now establish the opposite inequality. By \Cref{maximum}, there exists an effective wild connected subquiver $Q'$ of $Q$ such that $\lambda_{Q'}=\Lambda_Q$ (and in particular $\lambda_{Q'}>0$, as $Q$ is wild). By \Cref{critical}, $\alpha_{Q'}\in\Q_{>0}^{Q_0}$, hence there exists $m\geq 1$ such that $m\alpha_{Q'}\in \N^{Q_0}$. By \Cref{itsschur}, the vector $m\alpha_{Q'}$ is a Schur root. By \Cref{schurlower}, there exists a representation $M$ of dimension vector $m\alpha_{Q'}$ such that \[\trdeg_kk(M)\geq 1-\ang{m\alpha_{Q'},m\alpha_{Q'}}=1-m^2\ang{\alpha_{Q'},\alpha_{Q'}}=1+\Lambda_Qm^2.\]
	Since $|\alpha_{Q'}|=1$, we have $|m\alpha_{Q'}|=m$. Considering multiples of $m\alpha_{Q'}$ yields \[r_{Q}(mh)\geq 1+\Lambda_Q (mh)^2\] for every non-negative integer $h$. Let now $n$ be a positive integer. There exists a unique $h\geq 0$ such that $mh\leq n<m(h+1)$. We have $r_{Q}(n)\geq r_{Q}(mh)$, hence \[\frac{r_{Q}(n)}{n^2}\geq \frac{1+\Lambda_Q (mh)^2}{n^2}\geq \frac{1}{n^2}+\Lambda_Q\frac{(n-m)^2}{n^2}.\] Letting $n$ tend to infinity, we conclude that \begin{equation}\label{a2-}
	   a_2^{-}(kQ)=\liminf_{n\to\infty}\frac{r_{Q}(n)}{n^2}\geq \Lambda_Q. 
	\end{equation}
	The combination of (\ref{a2+}) and (\ref{a2-}) shows that $a_2^+(kQ)=a_2^-(kQ)=a_2(kQ)=\Lambda_Q$. 

	(b) By (a), it suffices to show that $\Lambda_Q\geq \frac{1}{2480}$ for every wild quiver $Q$. Let $Q'$ be a quiver obtained from $Q$ by removing one arrow. Then $q_{Q'}(\alpha)\geq q_Q(\alpha)$ for every $\alpha\in S_Q$. If $Q'$ is wild, then this implies that $\Lambda_{Q'}\leq\Lambda_Q$. Therefore, it suffices to prove $\Lambda_Q\geq \frac{1}{2480}$ in the case when the underlying graph of $Q$ is a minimal wild graph. By \Cref{minimaleffective}, $Q$ is effective and $\Lambda_Q=\lambda_Q$, hence we may compute $\Lambda_Q$ using the method of Lagrange multipliers, as explained in \Cref{critical}. We list the values of $\Lambda_Q=\lambda_Q$ that we have computed for the minimal wild graphs, following the notation given in \cite[Lecture 6, Subsection 6.7]{leit6}; another source is \cite[\S 2.4]{faenzi2013one}. The reader can find a picture of $\widetilde{\widetilde{E_8}}$ in the statement of \Cref{wild}(c), and pictures of $L_2, K_3$ and $U_5$ in the examples of \Cref{threefamilies}. The remaining wild graphs are displayed in \Cref{figure}.
	
	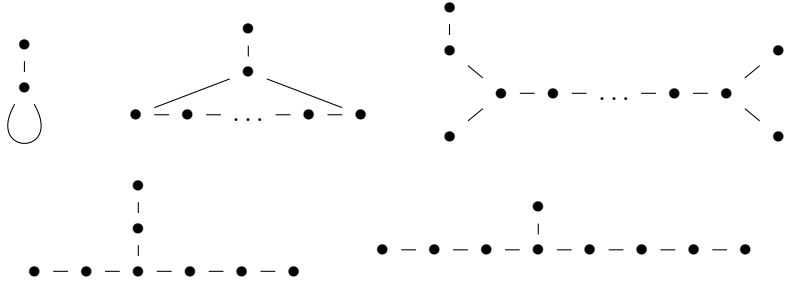
\begin{figure}
		\centering
	\[
		\begin{tikzcd}[row sep=tiny, column sep = tiny]
	 \bullet \arrow[d,-] \\
	 \bullet \arrow[out=-120,in=-60,-,loop]\\ 
	\end{tikzcd}\qquad
	\begin{tikzcd}[row sep=tiny, column sep = tiny]
	&& \bullet \arrow[d,-] \\
	&& \bullet \arrow[dll,-] \arrow[drr,-]  \\
	\bullet \arrow[r,-] & \bullet \arrow[r,-] & \dots\arrow[r,-] & \bullet \arrow[r,-] & \bullet
	\end{tikzcd}\qquad 
	\begin{tikzcd}[row sep=tiny, column sep = tiny]
	\bullet\arrow[d,-] \\
	\bullet \arrow[dr,-]&&&&&& \bullet\\
	& \bullet \arrow[r,-] & \bullet \arrow[r,-] &\dots \arrow[r,-] & \bullet \arrow[r,-] & \bullet \arrow[ur,-] \arrow[dr,-]\\    
	\bullet \arrow[ur,-] &&&&&& \bullet
	\end{tikzcd}
	\]
	\[
	\begin{tikzcd}[row sep=tiny, column sep = tiny]
	&& \bullet \arrow[d,-]  \\
	&& \bullet \arrow[d,-] \\
	\bullet \arrow[r,-]  & \bullet \arrow[r,-] & \bullet \arrow[r,-] & \bullet \arrow[r,-] & \bullet \arrow[r,-] & \bullet 
	\end{tikzcd}\qquad 
	\begin{tikzcd}[row sep=tiny, column sep = tiny]
	&&& \bullet \arrow[d,-]  \\
	\bullet \arrow[r,-] & \bullet \arrow[r,-] & \bullet \arrow[r,-] & \bullet \arrow[r,-] & \bullet \arrow[r,-] & \bullet \arrow[r,-] & \bullet \arrow[r,-] & \bullet 
	\end{tikzcd}
	\]
	\caption{The wild graphs $\widetilde{\widetilde{A_0}}$, $\widetilde{\widetilde{A_n}}$ (minimal for $1\leq n\leq 6$), $\widetilde{\widetilde{D_n}}$ (minimal for $4\leq n\leq 8$), $\widetilde{\widetilde{E_6}}$ and $\widetilde{\widetilde{E_7}}$. If one of these graphs has the index $j\geq 0$ in the name, it has $j+2$ vertices. \label{figure}} 
\end{figure}

\begin{multicols}{3} 
	\begin{itemize}\addtolength{\itemsep}{0.5\baselineskip}
		\item[--] $L_2$: $\lambda=1$,
		\item[--] $K_3$: $\lambda=\frac{1}{4}$,
		\item[--] $U_5$: $\lambda=\frac{1}{44}$,
		\item[--] $\widetilde{\widetilde{A_0}}$: $\lambda=\frac{1}{8}$,
		\item[--] $\widetilde{\widetilde{A_1}}$: $\lambda=\frac{1}{23}$,
		\item[--] $\widetilde{\widetilde{A_2}}$: $\lambda=\frac{1}{44}$,
		\item[--] $\widetilde{\widetilde{A_3}}$: $\lambda=\frac{1}{70}$,
		\item[--] $\widetilde{\widetilde{A_4}}$: $\lambda=\frac{1}{100}$,
		\item[--] $\widetilde{\widetilde{A_5}}$: $\lambda=\frac{1}{133}$,
		\item[--] $\widetilde{\widetilde{A_6}}$: $\lambda=\frac{1}{168}$,
		\item[--] $\widetilde{\widetilde{D_4}}$: $\lambda=\frac{1}{140}$,
		\item[--] $\widetilde{\widetilde{D_5}}$: $\lambda=\frac{1}{228}$,
		\item[--] $\widetilde{\widetilde{D_6}}$: $\lambda=\frac{1}{330}$,
		\item[--] $\widetilde{\widetilde{D_7}}$: $\lambda=\frac{1}{442}$,
		\item[--] $\widetilde{\widetilde{D_8}}$: $\lambda=\frac{1}{560}$,
		\item[--] $\widetilde{\widetilde{E_6}}$: $\lambda=\frac{1}{468}$,
		\item[--] $\widetilde{\widetilde{E_7}}$: $\lambda=\frac{1}{969}$,
		\item[--] $\widetilde{\widetilde{E_8}}$: $\lambda=\frac{1}{2480}$.
	\end{itemize}
\end{multicols}

The smallest value is $\frac{1}{2480}$, corresponding to $\widetilde{\widetilde{E_8}}$. The critical point is \[\alpha_{\widetilde{\widetilde{E_8}}}=\frac{1}{1240}(76,153,231,115,195,160,126,93,61,30).\]

(c) Let $Q$ be a wild quiver such that $a_2(kQ)=\Lambda_Q=\frac{1}{2480}$. By (b), this is the minimal possible value for $\Lambda_Q$. We may assume that $Q$ is connected. We must show that $Q$ is of type $\widetilde{\widetilde{E_8}}$.

We first claim that every subgraph of the underlying graph of $Q$ which is minimal wild must be of type $\widetilde{\widetilde{E_8}}$. Indeed, let $Q'$ be a subgraph of $Q$, and denote by $Q''$ the subquiver of $Q$ such that $Q''_0=Q'_0$. The effective subquivers of $Q''$ are also effective subquivers of $Q$, hence $\Lambda_{Q''}\leq \Lambda_Q$ by \Cref{maximum}. Since $Q'$ is obtained from $Q''$ by removal of some arrows, we have $q_{Q'}(\alpha)\geq q_{Q''}(\alpha)$ for every $\alpha\in S_{Q''}$. If $Q'$ is wild, then this implies that $\Lambda_{Q'}\leq\Lambda_{Q''}$. Therefore $\Lambda_{Q'}\leq \Lambda_Q$. If the graph of $Q'$ is minimal wild, by \Cref{maximum}, \Cref{minimaleffective} and the above list $Q'$ must be of type $\widetilde{\widetilde{E_8}}$, as claimed.

We now claim that $Q$ is obtained from a quiver of type $\widetilde{\widetilde{E_8}}$ by only adding arrows (and not vertices). To prove this, we may of course assume that $Q$ has no loops or multiple arrows.  By the previous claim, there exists in particular a wild subquiver $Q'$ of $Q$ which is obtained from a quiver of type $\widetilde{\widetilde{E_8}}$ by only adding arrows. We want to show that $Q_0=Q_0'$. If $Q_0\neq Q_0'$, we may pick a vertex $i\in Q_0\setminus Q_0'$ which is connected to $Q_0'$ by at least one arrow of $Q$. We let $Q''$ be the subquiver of $Q$ defined by the set of vertices $Q_0'\cup\set{i}$. Since $Q''$ is a subquiver of $Q$, by \Cref{maximum} we have $\Lambda_{Q''}\leq \Lambda_Q$, and since $\Lambda_Q$ is minimal we obtain that $\Lambda_{Q''}=\Lambda_Q=\frac{1}{2480}$. We now verify by a case by case analysis that $\Lambda_{Q''}>\frac{1}{2480}$, thus proving that $i\in Q_0\setminus Q_0'$ cannot exist. 

If two or more vertices of $Q'$ are connected to $i$ via an arrow, then $Q''$ contains a subquiver of type $\widetilde{\widetilde{A_h}}$, for some $0\leq h\leq 9$. One easily sees that this implies that $Q''$ (hence $Q$) contains a subquiver whose underlying graph is minimal wild of type $\widetilde{\widetilde{A_h}}$ for $0\leq h\leq 6$, $\widetilde{\widetilde{E_6}}$ or $\widetilde{\widetilde{E_7}}$. 
This contradicts the first claim. Therefore, $i$ is connected to exactly one $j\in Q'_0$. We now want to exclude this possibility.
\begin{itemize}
	\item[--] If $j=1$, then $Q$ contains a subquiver of type  $\widetilde{\widetilde{E_7}}$.
	\item[--] If $j=2$, then $Q$ contains a subquiver of type $\widetilde{\widetilde{D_5}}$.
	\item[--] If $j=3$, then $Q$ contains a subquiver of type $\widetilde{\widetilde{D_4}}$.	
	\item[--] If $j=4$, then $Q$ contains a subquiver of type $\widetilde{\widetilde{E_6}}$.
	\item[--] If $5\leq j\leq 8$, then $Q$ contains a subquiver of type $\widetilde{\widetilde{D_j}}$.
\end{itemize}
Therefore, the cases $1\leq j\leq 8$ are in contradiction with the first claim. The only remaining possibilities are $j=9$ and $j=10$. For $j=9,10$, the only subquivers of $Q''$ whose graphs are minimal wild are of type $\widetilde{\widetilde{E_8}}$ (note that $\widetilde{\widetilde{D_9}}$ is not minimal), and so we must use a different reasoning to exclude these two cases. If $j=9$, one finds that 
\[\alpha_{Q''}=\frac{1}{340}(16, 33, 51, 25, 45, 40, 36, 33, 31, 15, 15)\] and $\lambda_{Q''}=\frac{1}{680}$.
If $j=10$, then  
\[\alpha_{Q''}=\frac{1}{1639}(94, 190, 288, 143, 245, 204, 165, 128, 93, 60, 29)\] and $\lambda_{Q''}=\frac{1}{1639}$. In both cases, the coordinate of $\alpha_{Q''}$ corresponding to $i$ is the last one.
Thus, in each case $Q''$ is effective and $\lambda_{Q''}>\frac{1}{2480}$. By \Cref{maximum}, \[\Lambda_Q\geq \Lambda_{Q''}=\lambda_{Q''}>\frac{1}{2480}\] for $j=9,10$ too. This contradicts the assumptions, hence no such $i$ exists, and so $Q_0=Q_0'$, as claimed. We have shown that $Q$ is obtained from a quiver of type $\widetilde{\widetilde{E_8}}$ by only adding arrows, as claimed. In particular, $Q$ is a minimal wild quiver.

To conclude the proof, it is enough to show that the underlying graph of $Q$ is a minimal wild graph. If we remove one arrow from $Q$, we obtain a new quiver $Q'$ such that $q_{Q'}(\alpha)>q_Q(\alpha)$ for each $\alpha\in \mathring{S}_Q$. Since $Q$ has no proper wild subquivers, neither does $Q'$. If $Q'$ were wild, it would be a minimal wild quiver, hence by \Cref{maximum} and \Cref{minimaleffective} we would have $\Lambda_{Q'}=\lambda_{Q'}$. This would in turn imply \[\Lambda_{Q'}=\lambda_{Q'}=-q_{Q'}(\alpha_{Q'})<-q_{Q}(\alpha_{Q'})\leq \Lambda_Q.\] This is in contradiction with the minimality of $\Lambda_Q$, hence $Q'$ is non-wild. Thus every subgraph of $Q$ is the graph of a tame quiver, that is, the underlying graph of $Q$ is a minimal wild graph, as desired. 
\end{proof}

\begin{rmk}\label{bigo}
Let $Q$ be a wild quiver. The proof of \Cref{wild}(a) shows that $r_Q(n)=a_2(kQ)n^2+O(n)$ as $n\to\infty$.
\end{rmk}

\section{Examples}\label{threefamilies}

\Cref{maximum} gives a simple algorithm to determine $\Lambda_Q$ for a given quiver $Q$. For each wild subquiver $Q'$ of $Q$, one determines $\alpha_{Q'}$ and $\lambda_{Q'}$ by solving a system of linear equations (for minimal wild subquivers, one may use the list in the proof of \Cref{wild}). By inspection of the $\alpha_{Q'}$ and $\lambda_{Q'}$, one lists the connected effective subquivers of $Q$. Then $\Lambda_Q$ coincides with the maximum value of $\lambda_{Q'}$ among the subquivers in the list. As an example, we determine the coefficients $a_0,a_1,a_2$ explicitly for some families of quivers.

\begin{example}
	Let $K_r$ be an $r$-Kronecker quiver, 	
	\[
	\begin{tikzcd}
	1
	\arrow[r,-, draw=none, "\raisebox{+1.5ex}{\vdots}" description]
	\arrow[r,-, bend left]
	\arrow[r,-, bend right, swap]
	&
	2
	\end{tikzcd}\]
	with an arbitrary orientation of the arrows. The underlying graph of $K_1$ is a Dynkin diagram of type $A_2$. The quiver $K_2$ is tame of type $\widetilde{A}_2$, and the null root is given by $\delta=(1,1)$. If $r\geq 3$, $K_r$ is a minimal wild quiver. One may easily compute that $\alpha_{K_r}=(\frac{1}{2},\frac{1}{2})$, and $\Lambda_{K_r}=\lambda_{K_r}=\frac{r-2}{4}$. Using \Cref{finrep}(b), \Cref{tameed}, \Cref{wild} and \Cref{bigo}, we obtain
\begin{align*}
r_{K_r}(n)=\begin{cases}
0, &\text{ if $r=1$,}\\
\floor{\dfrac{n}{2}}, &\text{ if $r=2$,}\\
\dfrac{r-2}{4}n^2+O(n), &\text{ if $r\geq 3$.}\\
\end{cases}
\end{align*}
\end{example}

\begin{example}	
	Let $L_r$ be the $r$-loop quiver. It is the quiver with one vertex and $r$ arrows, here depicted for $r=4$:
		\[
	\begin{tikzcd}
	\bullet \arrow[out=60,in=120,loop,distance=3em] \arrow[out=-30,in=30,loop,distance=3em] \arrow[out=-120,in=-60,loop,distance=3em] \arrow[out=-210,in=-150,loop,distance=3em]
	\end{tikzcd}
	\]
	
	The quiver $L_1$ is tame of type $\widetilde{A}_1$, with null root $\delta=(1)$. If $r\geq 2$, $L_r$ is wild, $\alpha_{L_r}=(1)$ and $\Lambda_{L_r}=\lambda_{L_r}=r-1$. Thus
	
	\begin{align*}
	r_{L_r}(n)=\begin{cases}
	n, &\text{ if $r=1$,}\\
	(r-1)n^2+O(n), &\text{ if $r\geq 2$.}\\
	\end{cases}
	\end{align*}
\end{example}

\begin{example}\label{starshaped}
	
	Let $U_r$ be an $r$-starshaped quiver, that is, a quiver with vertices $0,1,\dots, r$ and such that for every $i=1,\dots,r$ there exists exactly one arrow connecting $i$ and $0$, and these are the only arrows. The orientation of arrows is arbitrary. Here is the picture for $r=4$:
	\[
	\begin{tikzcd}
	& 2 \arrow[d,-] \\
	1 \arrow[r,-] & 0 & 3 \arrow[l,-] \\
	& 4 \arrow[u,-]
	\end{tikzcd}
	\]
	If $r=1,2,3$, $U_r$ is of finite representation type, hence $r_{U_r}(n)=0$. The quiver $U_4$ is tame of type $\widetilde{D}_4$, and has a null root $\delta=(2,1,1,1,1)$, so $r_{U_4}(n)=\floor{n/6}$. If $r\geq 5$, then $U_r$ is wild. Moreover, $\alpha_{U_r}$ is the solution of the following system of linear equations:
\[
\begin{cases}
\sum\alpha_i=1\\
2\alpha_0-\sum_{i\neq 0}\alpha_i=-2\lambda_{U_r}\\
2\alpha_i-\alpha_0=-2\lambda_{U_r} \text{ for each $i\neq 0$.}
\end{cases}
\]
Note that these equations imply that $\alpha_i=\alpha_1$ for each $i\neq 0$. The solution of this linear system is $\alpha_{U_r}=\frac{1}{4r+2}(r+2,3,3,\dots,3)$, and $\lambda_{U_r}=\frac{r-4}{8r+4}$. In particular, for $r\geq 5$, $U_r$ is always effective for $r\geq 5$, and the sequence $\lambda_{U_r}$ is strictly increasing in $r$. Since any subquiver of $U_r$ is itself of the form $U_{r'}$, for some $r'\leq r$, we deduce that $\Lambda_{U_r}=\lambda_{U_r}=\frac{r-4}{8r+4}$. Therefore
\begin{align*}
r_{K_r}(n)=\begin{cases}
0, &\text{ if $r\leq 3$,}\\
\floor{\dfrac{n}{6}}, &\text{ if $r=4$,}\\
\dfrac{r-4}{8r+4}n^2+O(n), &\text{ if $r\geq 5$.}\\ 
\end{cases}
\end{align*}
Notice that $\Lambda_{U_r}\leq \frac{1}{8}$ for any $r\geq 5$.
\end{example}

\section{Appendix}
The purpose of this Appendix is to prove \Cref{uppertame}, which is used in the proof of \Cref{tameed}. 

\begin{lemma}\label{sep}
	Let $K$ be a separably closed field, and let $M$ be an indecomposable $A_K$-module. Then $M_{\cl{K}}$ is an indecomposable $A_{\cl{K}}$-module.
\end{lemma}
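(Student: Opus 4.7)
The plan is to recast indecomposability in terms of endomorphism algebras and then exploit the triviality of the Brauer group of a separably closed field. Recall that a finite-dimensional module is indecomposable if and only if its endomorphism algebra is local, and since $M$ is finite-dimensional over $K$, the base change formula $\on{End}_{A_{\cl{K}}}(M_{\cl{K}})\cong E\otimes_K\cl{K}$ holds, where $E:=\on{End}_{A_K}(M)$. So the task reduces to showing: if $E$ is a finite-dimensional local $K$-algebra (with $K$ separably closed), then $E\otimes_K\cl{K}$ is local.

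Since $E$ is local, $D:=E/j(E)$ is a division algebra over $K$. The Jacobson radical $j(E)$ is nilpotent, so $j(E)\otimes_K\cl{K}$ is a nilpotent ideal of $E\otimes_K\cl{K}$, and the quotient is $D\otimes_K\cl{K}$. Thus it suffices to prove that $D\otimes_K\cl{K}$ is local. Let $L$ be the center of $D$; then $L/K$ is a finite field extension, and $D$ is a central simple algebra over $L$.

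The key observation is that $K$ separably closed forces $L/K$ to be purely inseparable (any algebraic element has its separable part in $K$), and moreover $L$ itself must be separably closed: if $\alpha$ were separable over $L$, then $\alpha$ would be algebraic over $K$ with its separable closure already in $K\subseteq L$, hence purely inseparable over $L$, so $\alpha\in L$. Consequently $\Br(L)=0$, and therefore every central division algebra over $L$ is trivial, i.e.\ $D=L$. The hard part here is really just remembering that this Brauer-theoretic argument forces commutativity of $D$; once this is established the rest is formal.

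Finally, $D\otimes_K\cl{K}=L\otimes_K\cl{K}$ with $L/K$ finite purely inseparable, and a standard computation shows that this is a finite-dimensional local $\cl{K}$-algebra with residue field $\cl{K}$ (the nilpotent maximal ideal being generated by the purely inseparable generators of $L/K$). Therefore $E\otimes_K\cl{K}$ is local, and $M_{\cl{K}}$ is indecomposable, as required.
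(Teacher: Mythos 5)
Your proof is correct and follows the same overall strategy as the paper's: reduce to the residue division algebra $D=\on{End}(M)/j(\on{End}(M))$ and show that its base change to $\cl{K}$ is local. The two arguments diverge only at the end: the paper asserts (without justification) that $D$ is already a field because $K$ is separably closed, and then kills idempotents of $D\otimes_K\cl{K}$ directly with the Frobenius map, writing $\phi=\phi^{p^n}=\sum(\lambda_i\phi_i)^{p^n}\otimes 1\in D\otimes 1$; you instead make the commutativity of $D$ explicit via $\Br(L)=0$ for its center $L$ (finite purely inseparable over $K$, hence itself separably closed), and then conclude by the standard fact that $L\otimes_K\cl{K}$ is local when $L/K$ is purely inseparable. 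Your version has the small merit of supplying the Brauer-theoretic step that the paper leaves implicit; otherwise the two proofs are interchangeable.
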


\begin{proof}
	The result is trivial if $\on{char}K=0$, so we may assume that $\on{char}K=p>0$. We must prove that $M_{\cl{K}}$ is indecomposable as an $A_{\cl{K}}$-module. Since $K$ is separably closed, $L:=\on{End}(M)/j(\on{End}(M))$ is a field. Let $\phi$ be an idempotent in $L\otimes_K\cl{K}$. We may write $\phi=\sum \phi_i\otimes\lambda_i$ for some $\phi_i\in L$ and some $\lambda_i\in \cl{K}$. The extension $\cl{K}/K$ being purely inseparable, there exists a positive integer $n$ such that $\lambda_i^{p^n}\in K$ for every $i$. Since $\phi$ is idempotent, \[\phi=\phi^{p^n}=\sum \phi_i^{p^n}\otimes\lambda_i^{p^n}=\sum (\lambda_i\phi_i)^{p^n}\otimes 1\] belongs to $L$. Since $L$ is a field, we obtain $\phi=0,1$. This proves that $L\otimes_K\cl{K}$ is a local ring. Using the inclusion $j(\on{End}(M))\otimes_K\cl{K}\c j(\on{End}(M_{\cl{K}}))$, it follows that $\on{End}(M_{\cl{K}})/j(\on{End}(M_{\cl{K}}))$ is a local ring too, which means that $M_{\cl{K}}$ is indecomposable.
\end{proof}

For the proof of \Cref{uppertame} we closely follow \cite{donovanfreislich}. Another reference for the classification of indecomposable representations of tame quivers is \cite[Chapter XIII]{simson2007elements}.

\begin{proof}[Proof of \Cref{uppertame}]
	For every arrow $a:i\to j$ of $Q$, we denote by $\phi_a:M_i\to M_j$ the associated $K$-linear map of $M$. Assume that $Q$ is a tame quiver of type $\widetilde{A}_n$ (so the underlying graph of $Q$ is a cycle with $n+1$ vertices). Its null root is $\delta=(1,\dots,1)$. The case $n=0$ has already been treated in \Cref{basicexample}. Suppose that $Q$ has a cyclic orientation. Let $N$ be an indecomposable summand of $M_{\cl{K}}$, and $\psi_a:N_i\to N_j$ be the linear map associated to the arrow $a:i\to j$. By \cite[Theorem 7.6]{kirillov}, either (i) all the $\phi_a$ can be represented by matrices containing only $0$ and $1$, or (ii) all the $\phi_a$ but at most one are isomorphisms. By \Cref{sep}, every indecomposable summand of $M_{\cl{K}}$ is already defined over $K^{\on{sep}}$. The Galois group $\on{Gal}(K^{\on{sep}}/K)$ acts transitively on the isomorphism classes of indecomposable representations of $M_{K^{\on{sep}}}$. Thus, when one of the above is true for $N$, it is also true for every other indecomposable summand of $M_{\cl{K}}$ (we will use this reasoning multiple times during this proof). In case (i), by Noether-Deuring's Theorem, $M$ may be represented by matrices with entries in $\set{0,1}$. In case (ii), after fixing bases for the vector spaces $M_i$, we may assume that all the $\phi_a$ but at most one are represented by the identity matrix, and we are reduced to the case $n=0$. 
	
	Assume now that $Q$ is of type $\widetilde{A}_{2r+1}$, oriented in such a way that every even vertex is a sink, and every odd vertex is a source. For example, this is the orientation that we are considering on $\widetilde{A}_5$ (so $r=2$).	
	\begin{equation}\label{a5}
	\begin{tikzcd}
	& 4 & 5 \arrow[l] \arrow[dr] \\ 
	3 \arrow[ur] \arrow[dr] &&& 0 \\
	& 2	& 1 \arrow[l] \arrow[ur] 
	\end{tikzcd}
	\end{equation}\noindent
	If $r=0$, we have the Kronecker quiver, whose indecomposable $K$-representations are well-known for both orientations; see for example \cite[Theorem 3.6]{burban2012two}. Let now $r\geq 1$. Consider the base change $M_{\cl{K}}$, and denote by $N$ an indecomposable summand of $M_{\cl{K}}$, with linear maps $\psi_a:N_i\to N_j$. By \cite[Lemma 2.6.5]{donovanfreislich}, each $\psi_a$ is an isomorphism, with the exception of at most two. By applying \Cref{sep} as in the first paragraph, we deduce that all arrows of $M$ but at most two are represented by linear isomorphisms. Identifying vertices via these isomorphisms, we are reduced to the cases $\widetilde{A}_0$ or $\widetilde{A}_1$, which have already been handled. Consider now the case when $Q$ is of type $\widetilde{A}_n$, where $n$ is not necessarily odd and the orientation is acyclic but otherwise arbitrary. Adding arrows to $Q$ if necessary, we may identify $M$ with an indecomposable representation $M'$ of a quiver $Q'$ of type $\widetilde{A}_{2r+1}$, having the orientation given in (\ref{a5}) (i.e. every even vertex is a sink and every odd vertex is a source), for a suitable $r$. Of course, we require the new arrows to be represented by isomorphisms. It follows that if $M'$ may be defined using $0,1,a_1,\dots,a_m$, the same is true for $M$. This concludes the proof for quivers of type $\widetilde{A}_n$. 
	
	The case when $Q$ is of type $\widetilde{D}_n$ can be proved along similar lines. If $n=4$ and $Q$ has the orientation 
	\[
	\begin{tikzcd}
	& 2 \arrow[d,"\phi_2"] \\
	1 \arrow[r,"\phi_1"] & 0 & 3 \arrow[l,"\phi_3"] \\
	& 4 \arrow[u,"\phi_4"]
	\end{tikzcd}
	\]\noindent
	the indecomposable representations of $Q$ have been classified: see \cite{gelfand1970problems} for the original proof over algebraically closed fields, and \cite{medina2004four} for an elementary proof over arbitrary fields. Recall that the null root of $Q$ is $\delta=(2,1,1,1,1)$. We record here the $m\delta$-dimensional family consisting of all the $K$-representations of $Q$ that are not defined over the prime field of $K$ (see \cite[Appendix]{medina2004four}). 
	\begin{equation*}
	\phi_1=\begin{pmatrix} I_{m\times m}\\ 0 \end{pmatrix},\qquad \phi_2=\begin{pmatrix}0 \\ I_{m\times m}\end{pmatrix}, \qquad \phi_3=\begin{pmatrix}I_{m\times m} \\ I_{m\times m}\end{pmatrix}, \qquad \phi_4=\begin{pmatrix}A \\ I_{m\times m}\end{pmatrix}.
	\end{equation*}
	Here each of the eight blocks is a square matrix of size $m$, and $A$ is a square matrix of size $m$ in rational canonical form. If $M$ does not belong to this family, $M$ may be defined using only $0$ and $1$. On the other hand, if $M$ belongs to the family, $M$ may be defined using only $0,1,a_1,\dots,a_m$, where the $a_i$ are the coefficients of the last column of $A$.
	
	Assume now that $Q$ is of type $\widetilde{D}_n$, where $n>4$. By suitably adding arrows, similarly to what we have done in type $\widetilde{A}_n$, it suffices to consider the case when $n=2r+4\geq 6$ is even, and with the following orientation of arrows (here $r=3$): 
	\[
	\begin{tikzcd}
	& 2  \arrow[d] &&&& 8 \arrow[d]\\
	1 \arrow[r]
	& 3 
	& 4 \arrow[l] \arrow[r] 
	& 5 
	& 6 \arrow[l] \arrow[r] 
	& 7
	& 9. \arrow[l]
	\end{tikzcd}
	\]\noindent
	In other terms, the sinks are exactly the odd vertices different from $1$ and $2r+3$, and every other vertex is a source.
	
	By \cite[Lemma 3.8.5]{donovanfreislich}, if $N$ is an indecomposable summand of $M_{\cl{K}}$, either (i) $N$ can be defined by matrices with entries only $0$ and $1$, or (ii) all but at most two of the maps	
	\[N_1\oplus N_2\to N_3, N_4\to N_3,\dots, N_{2r}\to N_{2r+1}, N_{2r+2}\oplus N_{2r+3}\to N_{2r+1}\]
	must be isomorphisms. Applying \Cref{sep} as in the first paragraph, we see that in case (i) $M$ is defined by matrices consisting only of $0$ and $1$, and in case (ii) all but at most two of the linear maps \[M_1\oplus M_2\to M_3, M_4\to M_3,\dots, M_{2r}\to M_{2r+1}, M_{2r+2}\oplus M_{2r+3}\to M_{2r+1}\] are isomorphisms. Now, if one of the two maps \[M_1\oplus M_2\to M_3,\qquad M_{2r+3}\oplus M_{2r+4}\to M_{2r+2}\] is an isomorphism, then $M$ comes from a representation of a Dynkin quiver of type $D_{2r+2}$. If neither of these two arrows is represented by an isomorphism, then $M$ comes from a representation of a quiver of type $\widetilde{D}_4$. Since the underlying graph of $Q$ is a tree, by \cite[Lemma 3.6]{kirillov} any two orientations of $Q$ may be obtained one from the other via reflection functors. This proves the claim for quivers of type $\widetilde{D}_n$.
	
	To complete the proof of \Cref{uppertame}, only type $\widetilde{E}$ is left. In \cite{donovanfreislich}, the classification in type $\widetilde{E}$ is deduced from that of type $\widetilde{A}$ and $\widetilde{D}$ by means of certain functorial constructions. The arguments of \cite{donovanfreislich} work over an arbitrary field, as the authors say in \cite[\S 1.1]{donovanfreislich}. However, some of the references that they quote need to be modified; we now explain how.
	
	Assume first that $Q$ is a tame quiver of type $\widetilde{E}_6$, with the following orientation:
	\[
	\begin{tikzcd}
	&& 5 \arrow[d]\\ 
	&& 4 \arrow[d]\\
	3 \arrow[r]
	& 2 \arrow[r]
	& 1 
	& 6 \arrow[l]
	& 7. \arrow[l]
	\end{tikzcd}
	\]\noindent
	
	With this ordering, the null root of $Q$ is $\delta=(3,2,1,2,1,2,1)$. Let $Q'$ be the quiver (\ref{a5}), and let $\delta'$ be its null root. We construct a functor $F$ from the category of $K$-representations of $Q'$ of dimension $m\delta'$ to the category of $m\delta$-dimensional $K$-representations of $Q$ as follows. Let $N$ be a $K$-representation of $Q$, of dimension vector $m\delta'$, and denote by $N_0,\dots,N_5$ the vector spaces of $N$. Then $F(N)$ is given by the vector spaces (following the ordering in the figure): \[N_0\oplus N_2\oplus N_4,\quad N_0\oplus N_2,\quad N_1,\quad N_2\oplus N_4,\quad N_3,\quad N_4\oplus N_0,\quad N_5,\] and by linear maps defined in an obvious way using those of $N$. The functor $F$ is denoted by $S_6$ in \cite[4.5]{donovanfreislich}.
	
	If $M$ may not be defined using only $0$ and $1$, then $M$ belongs to the essential image of $F$. The proof of this fact is given in \cite[Theorem 4.8.1]{donovanfreislich} in the case when $K$ is algebraically closed. This argument is based on elementary linear algebra and works over an arbitrary field; see \cite[\S 1.1]{donovanfreislich}. The only step that requires further justification is the assertion that the category of regular $K$-representations of $Q$ is abelian. If $K$ is algebraically closed, this is proved in \cite[Proposition 4.7.1]{donovanfreislich}. For the case, where $K$ is an arbitrary field, we refer the reader to \cite[Proposition 3.2]{dlab1976indecomposable} or \cite[\S 4.1]{ringel1976representations}; see also \cite[\S 2.4]{Wolf:1188722} or the Introduction to \cite{dlab1976indecomposable}. It follows that $M$ comes from an $m\delta'$-dimensional representation $M'$ of a quiver $Q'$ of type $\widetilde{D}_4$. We know that $M'$ may be defined using $0,1,a_1,\dots,a_m$, for some $a_i\in K$, thus the same is true for $M$. By \cite[Lemma 3.6]{kirillov}, applying the reflection functors, this proves \Cref{uppertame} for every other orientation of $\widetilde{E}_6$.	
	
	The proof for $Q$ of type $\widetilde{E}_7$ or $\widetilde{E}_8$ is entirely analogous. The indecomposable representations of $\widetilde{E}_7$ not defined over the prime field of $K$ may be obtained from representations of $\widetilde{E}_6$, and those of $\widetilde{E}_8$ may be obtained from those of $\widetilde{E}_7$. The fact that the category of regular $K$-representations of $Q$ is abelian is proved in \cite[Proposition 5.7.1 and Proposition 6.7.1]{donovanfreislich} for an algebraically closed field $K$, and in \cite[Proposition 3.2]{dlab1976indecomposable} and \cite[\S 4.1]{ringel1976representations} for an arbitrary $K$. The rest of the proof is based on elementary linear algebra, and may be carried out over an arbitrary field.
\end{proof}

\section*{Acknowledgements}
I am very grateful to Angelo Vistoli for proposing to me this topic of research, and for making me aware of the methods in \cite{biswasdhillonhoffmann}, and to my advisor Zinovy Reichstein for his help and suggestions. I thank Roberto Pirisi and Mattia Talpo for helpful comments, and Ajneet Dhillon and Norbert Hoffmann for useful correspondence. I thank the referees for many useful suggestions.

\end{document}